\newcommand{\set}[1]{\{#1\}}
\newcommand{\N}{\mathbb{N}}
\newtheorem{theorem}{Theorem}[section]
\newtheorem{lemma}[theorem]{Lemma}
\newtheorem{corollary}[theorem]{Corollary}
\newtheorem{claim}[theorem]{Claim}
\newtheorem*{theorem*}{Theorem}
\newtheorem*{conjecture*}{Conjecture}
\theoremstyle{definition}
\newtheorem{conjecture}[theorem]{Conjecture}
\theoremstyle{remark}
\newtheorem{remark}[theorem]{Remark}
\numberwithin{equation}{section}
\newcommand{\abs}[1]{\lvert#1\rvert}
\title{Counting subgraphs of coloring graphs}
\author{Shamil Asgarli}
\address{Department of Mathematics and Computer Science \\ Santa Clara University \\ 500 El Camino Real \\ Santa Clara, CA 95053}
\email{sasgarli@scu.edu}
\author{Sara Krehbiel}
\address{Department of Mathematics and Computer Science \\ Santa Clara University \\ 500 El Camino Real \\ Santa Clara, CA 95053}
\email{skrehbiel@scu.edu}
\author{Howard W. Levinson}
\address{Department of Computer Science \\ Oberlin College \\ 10 N Professor St \\ Oberlin, OH 44074}
\email{hlevinso@oberlin.edu}
\author{Heather M. Russell}
\address{Department of Mathematics and Statistics \\ University of Richmond \\ 410 Westhampton Way \\ Richmond, VA 23173}
\email{hrussell@richmond.edu}
\begin{document}

\begin{abstract}
The chromatic polynomial $\pi_{G}(k)$ of a graph $G$ can be viewed as counting the number of vertices in a family of coloring graphs $\mathcal C_k(G)$ associated with (proper) $k$-colorings of $G$ as a function of the number of colors $k$. These coloring graphs can be understood as a reconfiguration system. We generalize the chromatic polynomial to $\pi_G^{(H)}(k)$, counting occurrences of arbitrary induced subgraphs $H$ in these coloring graphs, and we prove that these functions are polynomial in $k$. In particular, we study the {\em chromatic pairs polynomial} $\pi_{G}^{(P_2)}(k)$, which counts the number of edges in coloring graphs, corresponding to the number of pairs of colorings that differ on a single vertex. We show two trees share a chromatic pairs polynomial if and only if they have the same degree sequence, and we conjecture that the chromatic pairs polynomial refines the chromatic polynomial in general.  
We also instantiate our polynomials with other choices of $H$ to generate new graph invariants.
\end{abstract}

\maketitle

\vspace{1em}

\noindent {\bf Keywords. } Chromatic polynomial, reconfiguration systems, coloring graphs, graph invariants.\\
\noindent {\bf MSC Codes. } 05C15, 05C31

\section{Introduction}
For positive integer $k$, a (proper) {\em $k$-coloring} of a graph $G$ is an assignment of each vertex in $G$ to a number in $\set{1,\dots,k}$ such that no two adjacent vertices have the same color. The {\em chromatic polynomial} $\pi_{G}(k)$ counts the number of $k$-colorings of $G$ as a function of $k$. Birkhoff originally introduced this function to tackle the $4$-color conjecture and showed that $\pi_{G}(k)$ is a polynomial function in $k$ \cite{Bir12}. Read's article  on the chromatic polynomial \cite{Rea68} is an excellent introduction to this important function and graph invariant.

The function $\pi_G(k)$ satisfies the \emph{deletion-contraction} recurrence: $\pi_{G}(k)=\pi_{G-e}(k)-\pi_{G/e}(k)$ for any graph $G$ and edge $e\in E(G)$. By induction on the number of edges, the recurrence implies that $\pi_G(k)$ is a polynomial. This graph invariant contains information such as the number of vertices, edges, triangles, and connected components of $G$.  However, it is far from a complete invariant. For example, every tree $T$ on $n$ vertices shares the chromatic polynomial $\pi_T(k)=k(k-1)^{n-1}$.

\subsection{Our Results}
Noting that the chromatic polynomial counts colorings in isolation, we examine the relationship between related colorings. In particular, we call two colorings adjacent if one can be obtained from the other by toggling the color of a single vertex. Counting the number of such pairs of colorings, we obtain a new invariant, which we call the {\em chromatic pairs polynomial}. We show that this function takes a particular form for trees (see Table~\ref{tab:cpp}) that immediately implies that two trees with the same degree sequence agree on this function. Conversely, the degree sequence can be extracted from this function 
to show that two trees share a chromatic pairs polynomial if and only if they have the same degree sequence (Theorem~\ref{thm:treedeg}).

Our main result is a generalization of this function that allows us to count more complex relationships between colorings. More precisely, we consider an object called a \emph{$k$-coloring graph}, which organizes all proper $k$-colorings of $G$ in a single graph $\mathcal{C}_k(G)$. Each vertex of $\mathcal{C}_k(G)$ corresponds to a $k$-coloring of $G$, and the edge set of $\mathcal{C}_k(G)$ corresponds to the pairs of adjacent colorings. From this point of view, the chromatic polynomial and chromatic pairs polynomial count the number of vertices and edges, respectively, of the coloring graph. Our generalization provides a formula for $\pi_{G}^{(H)}(k)$, counting the number of times an arbitrary graph $H$ appears as an induced subgraph of $G$ in $\mathcal{C}_k(G)$ as a function of $k$. Our main result, Theorem~\ref{thm:general} in Section~\ref{sect:general}, is summarized as follows:
\begin{theorem*}[Theorem~\ref{thm:general}]
    Fix graphs $G$ and $H$, and let $\pi_{G}^{(H)}(k)$ denote the number of induced subgraphs of $\mathcal C_k(G)$ that are isomorphic to $H$. Then $\pi_{G}^{(H)}(k)$ is a polynomial in $k$ for $k$ sufficiently large relative to $H$.
\end{theorem*}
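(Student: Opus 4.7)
The plan is to express $\pi_G^{(H)}(k)$ as a signed finite sum of chromatic polynomials of auxiliary graphs, which is then automatically polynomial in $k$. Writing $n = |V(H)|$, I first observe that $|\operatorname{Aut}(H)| \cdot \pi_G^{(H)}(k)$ equals the number of ordered injective induced embeddings $V(H) \hookrightarrow V(\mathcal{C}_k(G))$, i.e., the number of ordered $n$-tuples $(c_1, \ldots, c_n)$ of distinct proper $k$-colorings of $G$ such that the disagreement set $\partial_{ij} := \{v \in V(G) : c_i(v) \neq c_j(v)\}$ has $|\partial_{ij}| = 1$ when $\{i,j\} \in E(H)$ and $|\partial_{ij}| \geq 2$ otherwise.

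I stratify these tuples by an \emph{edge labeling} $\ell : E(H) \to V(G)$ recording the unique disagreement vertex for each edge of $H$, and apply inclusion--exclusion to convert the exact and inequality conditions into pure agreement constraints. For each edge of $H$, I rewrite ``$\partial_{ij} = \{\ell(\{i,j\})\}$'' as ``$\partial_{ij} \subseteq \{\ell(\{i,j\})\}$'' minus ``$c_i = c_j$''; for each non-edge, I rewrite ``$|\partial_{ij}| \geq 2$'' as the complement of $\bigcup_v \{\partial_{ij} \subseteq \{v\}\}$, using that higher-order intersections of these events collapse to ``$c_i = c_j$'', which yields clean Möbius coefficients. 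The count for each $\ell$ becomes a finite signed sum indexed by \emph{extended labelings} $\ell^* : \binom{[n]}{2} \to V(G) \cup \{\emptyset\}$, where the label $v \in V(G)$ imposes ``$c_i, c_j$ agree off $\{v\}$'' and the label $\emptyset$ imposes ``$c_i = c_j$''.

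For each $\ell^*$, the tuples satisfying the associated agreement and residual disagreement constraints correspond bijectively to proper $k$-colorings of a quotient graph $Q_{\ell^*}$: start with $V(G) \times [n]$, identify $(v, i) \equiv (v, j)$ whenever $\ell^*$ forces $c_i(v) = c_j(v)$, and add edges from $E(G)$ in each row $V(G) \times \{i\}$ (enforcing properness of $c_i$) and from each mandated disagreement $\ell^*(\{i,j\}) = v$ (between the classes of $(v,i)$ and $(v,j)$). Thus each stratum count equals $\pi_{Q_{\ell^*}}(k)$, a polynomial in $k$ (identically zero if the construction introduces a loop, matching the fact that no such tuples exist). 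Summing the finitely many $\pm \pi_{Q_{\ell^*}}(k)$ and dividing by $|\operatorname{Aut}(H)|$ gives the desired polynomial. The main obstacle is the combinatorial bookkeeping---organizing the extended labelings, tracking signs in the non-edge inclusion--exclusion, and verifying the bijection with colorings of $Q_{\ell^*}$; the ``$k$ sufficiently large'' condition ensures that every relevant $Q_{\ell^*}$ (of which there are finitely many, depending on $H$ and $G$) is $k$-colorable, so that the signed-sum identity faithfully reflects the actual count rather than an artifact of inclusion--exclusion over empty strata.
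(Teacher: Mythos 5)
Your proposal is correct in substance but takes a genuinely different route from the paper's proof. The paper counts unordered induced copies directly: it introduces minimal $H$-generators $(U,C)$, normalizes the palette of colors actually used via the permutation $\sigma_{\mathcal P}$ (which produces the $\binom{k}{\kappa}$ factors), and invokes Erey's result that restrained chromatic polynomials are eventually polynomial, arriving at the explicit formula $\pi_G^{(H)}(k)=\sum_{\kappa}\sum_{(U,C)\in\mathcal G_{\kappa}}\binom{k}{\kappa}\rho_{r_{(U,C)}}(k)$ that drives the instantiations in Section~\ref{sec:instantiations}. You instead count ordered induced embeddings (an $\lvert\operatorname{Aut}(H)\rvert$-multiple of $\pi_G^{(H)}(k)$), stratify by the disagreement vertex of each $H$-edge, and use inclusion--exclusion to reduce everything to agreement-type constraints, each stratum being counted by the chromatic polynomial of a quotient of $n=\lvert V(H)\rvert$ disjoint copies of $G$; this is essentially the strategy of the Hogan--Scott--Tamitegama--Tan note cited in the paper rather than the authors' generator-based argument. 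Your route is self-contained (no restrained chromatic polynomials needed) and in fact proves more than you claim: the chromatic polynomial of each quotient graph counts its proper $k$-colorings exactly for every $k\ge 1$, with an empty stratum contributing $\pi_{Q_{\ell^*}}(k)=0$, so the signed identity is exact for all $k$ and your closing hedge about ``$k$ sufficiently large'' is unnecessary (compare Remark~\ref{remark:thmstatement}); the paper's formula, by contrast, is more convenient for deriving the closed forms used later, at the price of relying on Erey's theorem. Two bookkeeping slips you should fix when writing this up: the codomain of your extended labelings needs a third symbol meaning ``no constraint on this pair,'' which arises from the complementation step for non-edges of $H$; and you should commit to one consistent encoding of an $H$-edge --- either keep the exact-disagreement condition and realize it by an added edge between the classes of $(v,i)$ and $(v,j)$ in $Q_{\ell^*}$ (no subtraction of ``$c_i=c_j$'' needed), or decompose it as ``agree off $v$'' minus ``$c_i=c_j$'' with no added edge --- as written you do both at once. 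Note also that with agreement-only identifications no loops can arise, since vertices are only merged when they share the same first coordinate; loops (and hence identically-zero strata) only occur in the variant where mandated-disagreement edges coexist with identifications at the same vertex.
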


We note that $\pi_G^{(P_2)}(k)$ yields the chromatic pairs polynomial, and further exploration of this special case reveals additional insight about how it may be used as a refined invariant relative to the chromatic polynomial. In Section~\ref{sec:instantiations}, we instantiate the chromatic pairs polynomial for the classes of graphs in Table~\ref{tab:cpp}.

\begin{table}[ht]
    \begin{center}
{\renewcommand{\arraystretch}{1.5}{
\begin{tabular}{ |c|c|c| } 
 \hline
Graph &  Chromatic Polynomial & Chromatic Pairs Polynomial \\ 
 \hline
$N_n$ &  $k^n$ & $n\binom{k}{2} k^{n-1}$ \\ 
$K_n$ & $k(k-1)(k-2)\cdots (k-n+1)$ & $\frac{n}{2} k(k-1)(k-2)\cdots (k-n)$ \\
Tree & $k(k-1)^{n-1}$ & $\binom{k}{2}\sum_{v\in V(T)}(k-2)^{\deg(v)}(k-1)^{n-\deg(v)-1}$ \\ 
$C_n$ & $(k-1)^{n} + (-1)^{n} (k-1)$ & $\frac n 2 k(k-4)(k-1)^{n-1}+2n(k-1)^{n-1}+(-1)^n n(k-1)(k-2)$  \\
 \hline
\end{tabular}}}
\end{center}
\caption{Comparison of chromatic polynomials and chromatic pairs polynomials for various graphs}\label{tab:cpp}
\end{table}

In Section~\ref{sect:invariants}, we derive explicit formulas for several coefficients in the chromatic pairs polynomial. 

\begin{theorem*}[Theorem~\ref{thm:lowcoeffs}]
 Let $G$ be a graph with $n$ vertices, $m$ edges, degree sequence $\set{d_i}_{i\in[n]}$, $\ell$ triangles, and $t$ connected components. Then there exist positive rational numbers $a_{n-2},a_{n-3},\dots,a_t$ such that 
    \begin{align*}
 \pi_{G}^{(P_2)}(k) &=  \frac{n}{2} k^{n+1} - \frac{n+nm+2m}{2} k^{n} + \frac{1}{2} \left( \frac{nm(m+1)}{2}+2m^2-m-(n+3)\ell+\frac{1}{2} \sum_{i=1}^{n} d_i^2 \right) k^{n-1}  \\ 
 & \quad - a_{n-2}k^{n-2}+ a_{n-3}k^{n-3} - \ldots + (-1)^{n+1-t} a_{t} k^t\ .
    \end{align*}
\end{theorem*}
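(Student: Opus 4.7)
My plan is to express $\pi_G^{(P_2)}(k)$ as a signed sum of ordinary chromatic polynomials and then read off coefficients. For each $v\in V(G)$, let $G^{(v)}$ denote the graph obtained from $G$ by adding a new vertex $v'$ adjacent to exactly $N(v)$, so that $v$ and $v'$ are non-adjacent twins. Identifying $v$ with $v'$ in $G^{(v)}$ returns $G$, hence proper $k$-colorings of $G^{(v)}$ with $c(v)\neq c(v')$ correspond exactly to ordered pairs of proper $k$-colorings of $G$ differing on the single vertex $v$. Summing over $v$ and dividing by $2$ to pass from ordered pairs to edges of $\mathcal{C}_k(G)$ yields the key identity
\[
\pi_G^{(P_2)}(k)=\tfrac{1}{2}\sum_{v\in V(G)}\bigl(\pi_{G^{(v)}}(k)-\pi_G(k)\bigr),
\]
which also gives an independent proof that $\pi_G^{(P_2)}(k)$ is a polynomial.

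The explicit formulas for the top three coefficients follow from the standard expansion $\pi_H(k)=k^N-e(H)\,k^{N-1}+\bigl(\binom{e(H)}{2}-\tau(H)\bigr)k^{N-2}-\cdots$, where $N=|V(H)|$, $e(H)=|E(H)|$, and $\tau(H)$ is the number of triangles in $H$. I would apply this to $H=G^{(v)}$ (which has $n+1$ vertices, $m+d_v$ edges, and $\ell+\ell_v$ triangles, where $\ell_v$ is the number of triangles of $G$ through $v$) and to $H=G$, subtract, and sum over $v$ using the handshake-type identities $\sum_v d_v=2m$, $\sum_v d_v^2=\sum_i d_i^2$, and $\sum_v \ell_v=3\ell$ (each triangle is counted at each of its three vertices). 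Collecting terms produces the claimed coefficients of $k^{n+1}$, $k^n$, and $k^{n-1}$; a short sanity check against $K_2$ or $P_3$ confirms the arithmetic.

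For the lower-degree coefficients, I would invoke the following fact, provable by a short induction on $|E(H)|$ via deletion-contraction: for every graph $H$ with $N$ vertices and $c(H)$ components, $(-1)^{N-j}[k^j]\pi_H(k)>0$ for all $j\in\{c(H),\ldots,N\}$ and vanishes otherwise. When $v$ is non-isolated, $c(G^{(v)})=c(G)=t$, so the signs of $[k^j]\pi_{G^{(v)}}(k)$ and of $-[k^j]\pi_G(k)$ agree and equal $(-1)^{n+1-j}$, with both strictly nonzero on $j\in\{t,\ldots,n+1\}$. When $v$ is isolated, $c(G^{(v)})=t+1$, so $[k^t]\pi_{G^{(v)}}(k)=0$ but the $-[k^t]\pi_G(k)$ term still carries the correct sign. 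Summing over all $v$ and halving shows that $(-1)^{n+1-j}[k^j]\pi_G^{(P_2)}(k)$ is a strictly positive rational for every $j\in\{t,\ldots,n+1\}$; defining $a_j$ as its absolute value for $t\leq j\leq n-2$ gives the desired expansion.

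The main obstacle will be the last step: getting \emph{strict} positivity of each $a_j$ rather than only the alternating-sign pattern requires the no-internal-zeros property of the chromatic polynomial on its support (which I would prove by combining alternating signs with a careful inductive check that each deletion-contraction contribution is strictly signed in the right direction), plus a small case analysis for isolated vertices whose duplication shifts the support of $\pi_{G^{(v)}}$ up by one. Everything else reduces to bookkeeping from the twin-vertex identity of Step~1.
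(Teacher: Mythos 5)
Your proposal is correct, and it takes a genuinely different route from the paper. You replace the paper's central tool, restrained chromatic polynomials, with the twin-vertex identity $\pi_G^{(P_2)}(k)=\tfrac12\sum_{v\in V(G)}\bigl(\pi_{G^{(v)}}(k)-\pi_G(k)\bigr)$, where $G^{(v)}$ adds a non-adjacent false twin of $v$; this identity is valid (ordered pairs of colorings agreeing off $v$ biject with colorings of $G^{(v)}$), and it is equivalent to the paper's Eq.~\eqref{eq:edge_formula} via $\pi_{G^{(v)}}(k)-\pi_G(k)=k(k-1)\rho_{r_v^{(2)}}(k)$, but you never need that. For the top three coefficients the paper instantiates Erey's formulas for the second and third coefficients of restrained chromatic polynomials on $G-v$ and sums; you instead expand ordinary chromatic polynomials of $G^{(v)}$ (with $n+1$ vertices, $m+d_v$ edges, $\ell+\ell_v$ triangles) and of $G$, and your bookkeeping with $\sum_v d_v=2m$, $\sum_v\ell_v=3\ell$ reproduces exactly the stated $a_{n+1},a_n,a_{n-1}$ (I checked the arithmetic). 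For the lower coefficients the paper runs a deletion-contraction induction on restrained polynomials of connected graphs with a $\set{1,2}$-restrained vertex (choosing a non-cut vertex $w$), then handles disconnected graphs via Lemma~\ref{lemma:disjoint} and induction on components; you instead invoke the classical fact that $\pi_H(k)$ has strictly alternating nonzero coefficients exactly in degrees $c(H),\dots,\abs{V(H)}$, note that $c(G^{(v)})=t$ for non-isolated $v$ and $t+1$ for isolated $v$ (where $-\pi_G$ still supplies the $k^t$ term), and observe that all summands carry the same sign $(-1)^{n+1-j}$ on $\set{t,\dots,n+1}$, so no cancellation occurs. What your route buys: no dependence on Erey's results, a self-contained proof that $\pi_G^{(P_2)}$ is a polynomial for all $k$, and a uniform treatment of disconnected graphs without a separate product lemma; what the paper's route buys is compatibility with its general framework (the same restrained-polynomial decomposition powers Theorem~\ref{thm:general} and the $K_t$-counting formulas). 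The only point to tighten is the classical support-and-sign fact you defer: rather than re-proving it by a delicate deletion-contraction induction (deletion can disconnect, so strictness needs care), it is cleanest to cite Whitney's broken-circuit theorem, under which $\abs{[k^j]\pi_H(k)}$ counts the broken-circuit-free edge sets of size $\abs{V(H)}-j$, which are nonempty precisely for $c(H)\le j\le\abs{V(H)}$ because subsets of a maximal spanning forest contain no broken circuit. With that reference, your argument is complete.
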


Collectively, this gives us results analogous to chromatic uniqueness. Specifically, we show that chromatic pairs polynomials for null and complete graphs, certain types of trees, and cycles are unique. All of these results point to the conjecture that the chromatic pairs polynomial is a refined invariant of the chromatic polynomial in the following sense:
\begin{conjecture*}[Conjecture~\ref{conj:edge-implies-chromatic}]\label{conj1:intro}
    If two graphs have the same chromatic pairs polynomial, then they have the same chromatic polynomial.
\end{conjecture*}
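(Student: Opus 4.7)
The plan is to establish the conjecture by extracting the coefficients of $\pi_G(k)$ from those of $\pi_G^{(P_2)}(k)$, starting from the structural identity
\[
2\pi_G^{(P_2)}(k) \;=\; \sum_{v\in V(G)} \bigl(\pi_{G^{+}_v}(k) - \pi_G(k)\bigr),
\]
where $G^{+}_v$ is the graph obtained from $G$ by adjoining a twin $v'$ of $v$ sharing the open neighborhood $N(v)$ and not adjacent to $v$. Each ordered pair $(c,c')$ of proper colorings differing only at $v$ corresponds to a proper coloring of $G^{+}_v$ satisfying $c(v)\neq c(v')$, and the subset of $G^{+}_v$-colorings with $c(v)=c(v')$ is in bijection with the proper colorings of $G$, yielding the identity.

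Using this identity, the first two coefficients of $\pi_G^{(P_2)}(k)$ already recover $n=|V(G)|$ and $m=|E(G)|$, matching the first two coefficients of $\pi_G(k)$. For the third coefficient $\binom{m}{2}-\ell$ of $\pi_G(k)$, Theorem~\ref{thm:lowcoeffs} supplies one linear relation among $\ell$ and $\sum_v d_v^2$ via the coefficient of $k^{n-1}$ in $\pi_G^{(P_2)}(k)$. I would next compute the coefficient of $k^{n-2}$ in $\pi_G^{(P_2)}(k)$ by applying Whitney's expansion $\pi_{G^{+}_v}(k)=\sum_{S\subseteq E(G^{+}_v)}(-1)^{|S|}k^{c(S)}$ to each $G^{+}_v$, summing over $v$, and subtracting $n\pi_G(k)$; this should furnish a second relation that, together with the first, isolates $\ell$. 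Proceeding inductively, each new coefficient of $\pi_G^{(P_2)}(k)$ contributes one further equation from which the next coefficient of $\pi_G(k)$ can be solved.

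The main obstacle is ensuring that at each step the resulting linear system is nondegenerate. The Whitney expansion of $\pi_{G^{+}_v}(k)$ produces \emph{vertex-local} subgraph invariants of $G$ near $N(v)$, whereas the coefficients of $\pi_G(k)$ are \emph{global} subgraph counts. The crucial question is whether summing the local data over $v$ determines the global data---equivalently, whether any accidental cancellation across $\sum_v$ can allow two chromatically inequivalent graphs to share a chromatic pairs polynomial. Ruling this out will likely require degree-sequence information, already shown to be recoverable in the tree case by Theorem~\ref{thm:treedeg}, and may require combining several coefficients of $\pi_G^{(P_2)}(k)$ in nonlinear ways.

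A conceptually cleaner alternative would be to construct a bivariate refinement $P_G(k,t)$ analogous to the Tutte polynomial that specializes to both $\pi_G(k)$ and $\pi_G^{(P_2)}(k)$, and to verify that $\pi_G^{(P_2)}$ alone determines $P_G$. Such a refinement, if it admits a transparent subgraph interpretation, would sidestep the inductive obstacle by providing a uniform combinatorial framework; finding it is likely the deepest conceptual hurdle in settling the conjecture.
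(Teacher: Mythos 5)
The statement you are attempting is Conjecture~\ref{conj:edge-implies-chromatic}; the paper does not prove it, and it remains open there. So the relevant question is whether your proposal actually closes the gap, and it does not: by your own account it is a plan with the decisive steps unresolved. Your opening identity is correct and is essentially a repackaging of the paper's Eq.~\eqref{eq:edge_formula}: adding a false twin $v'$ of $v$ gives $\pi_{G^{+}_v}(k)-\pi_G(k)=2\binom{k}{2}\rho_{r_v^{(2)}}(k)$, the (doubled) count of coloring-graph edges attributable to $v$. From there, what is actually known (Theorem~\ref{thm:edge}, which is the result you want, not Theorem~\ref{thm:lowcoeffs}) is that the coefficient of $k^{n-1}$ in $\pi_G^{(P_2)}$ determines only the combination $(n+3)\ell-\tfrac12\sum_i d_i^2$, not $\ell$ itself; so even recovering the third coefficient $\binom{m}{2}-\ell$ of $\pi_G$ already requires a second, independent relation that you have not produced. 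Your plan to get it from the $k^{n-2}$ coefficient via Whitney's expansion of each $\pi_{G^{+}_v}$ is exactly where the difficulty lives: those coefficients are sums over $v$ of local broken-circuit/subgraph data near $N(v)$, the coefficients of $\pi_G$ are global subgraph counts, and nothing in your argument rules out the cancellations you yourself flag. The paper's examples show the danger concretely: the graphs in Figure~\ref{fig:G7s} have identical chromatic pairs polynomials but different degree sequences, so the "degree-sequence information" you hope to lean on (available for trees via Theorem~\ref{thm:treedeg}) is simply not recoverable from $\pi_G^{(P_2)}$ in general.

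In short, there is a genuine gap rather than a wrong step: the inductive claim that each successive coefficient of $\pi_G^{(P_2)}$ yields a nondegenerate system solving for the next coefficient of $\pi_G$ is unsubstantiated, and the alternative route (a bivariate refinement $P_G(k,t)$ determined by $\pi_G^{(P_2)}$ alone) is proposed but not constructed. What you have is a reasonable reformulation of the conjecture plus the evidence the paper already records (agreement of the first three chromatic coefficients, and the tree, cycle, and complete/null graph cases), not a proof. If you pursue this further, the twin-vertex identity is a good lever, but you would need either a structural argument controlling all coefficients simultaneously (not coefficient-by-coefficient linear algebra) or an explicit invariant interpolating between $\pi_G$ and $\pi_G^{(P_2)}$ with a proof that the latter determines it.
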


We also study additional instantiations of our general result, including the task of counting cycles in coloring graphs. This task is increasingly difficult for cycles of growing length. We also study necessary and sufficient conditions for occurrences of induced hypercubes in coloring graphs, which relates to independent sets and can be used to distinguish graphs based on coarse information about their coloring graphs. 

Failing to identify non-isomorphic graphs that match on all $H$-polynomials, we predict that a graph $G$ is completely determined by the infinite family of polynomials $\pi_G^{(H)}(k)$ as $H$ ranges through all graphs. 

\begin{conjecture*}[Conjecture~\ref{conj:col-graphs-determine-G}]\label{conj2:intro}
For graphs $G_1$ and $G_2$, $G_1\cong G_2$ if and only if $\pi_{G_1}^{(H)}(k)=\pi_{G_1}^{(H)}(k)$ for every~$H$.
\end{conjecture*}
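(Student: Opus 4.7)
The forward direction is immediate, since any isomorphism $G_1 \cong G_2$ induces an isomorphism $\mathcal{C}_k(G_1) \cong \mathcal{C}_k(G_2)$, and isomorphic graphs share all induced subgraph counts. My plan for the nontrivial direction is a two-stage reduction. For the first stage, I would argue that fixing a sufficiently large $k$ and knowing $\pi_G^{(H)}(k)$ for every finite graph $H$ is equivalent to knowing $\mathcal{C}_k(G)$ up to isomorphism. This is a standard consequence of the fact that any finite graph $X$ is determined by its induced subgraph profile: the number of vertices of $\mathcal{C}_k(G)$ is recovered as $\pi_G^{(K_1)}(k) = \pi_G(k)$, and among graphs $H$ of that order, exactly one (namely $\mathcal{C}_k(G)$ itself) gives a nonzero count. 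Hence the conjecture reduces to showing that $\mathcal{C}_k(G)$, for a single sufficiently large $k$, determines $G$ up to isomorphism.

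For the second stage, the goal is to recover $G$ from the abstract graph $\mathcal{C}_k(G)$. The natural strategy is to identify the \emph{parallel classes} of edges, where two edges of $\mathcal{C}_k(G)$ are declared parallel when they correspond to recoloring the same vertex of $G$. Parallelism can be detected via induced $4$-cycles: non-adjacent $u, v \in V(G)$ give commuting recolorings and hence a large number of induced $C_4$'s in $\mathcal{C}_k(G)$ whose opposite sides are parallel, while adjacent vertices yield a strictly more restricted commutation pattern. Once the $n = |V(G)|$ parallel classes are canonically identified (say, by counting at each edge how many induced $C_4$'s it shares with each other edge), the vertex set of $G$ is recovered as the set of classes, and the adjacency relation in $G$ is reconstructed by comparing joint $C_4$-statistics between classes, which is exactly the ``induced hypercube'' information alluded to in the discussion preceding Conjecture~\ref{conj:col-graphs-determine-G}.

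The hard part will be stage two: rigorously showing that the parallelism partition is canonically defined from the abstract isomorphism type of $\mathcal{C}_k(G)$, and that the reconstructed adjacency relation always recovers $G$ faithfully. The automorphism group of $\mathcal{C}_k(G)$ always contains the ``color symmetry'' group $S_k$ together with $\operatorname{Aut}(G)$, and while these symmetries preserve the expected parallelism partition, special $G$ may admit additional coincidental symmetries of $\mathcal{C}_k(G)$ that scramble parallelism and require more delicate analysis. A reasonable path toward partial progress is to restrict first to well-behaved families (trees, bipartite graphs, or graphs where $\mathcal{C}_k(G)$ is known to be connected with no symmetries beyond $S_k \times \operatorname{Aut}(G)$) and prove reconstruction within those families, leaving the general case as a direction for future work.
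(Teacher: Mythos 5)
The statement you were asked to prove is Conjecture~\ref{conj:col-graphs-determine-G}; the paper itself does not prove it. What the paper does establish is only the equivalence of the conjecture's two formulations: isomorphic graphs trivially share all $H$-polynomials, and conversely, if $\pi_{G_1}^{(H)}(k)=\pi_{G_2}^{(H)}(k)$ for all $H$, then $H=N_1$ gives $|V(\mathcal{C}_k(G_1))|=|V(\mathcal{C}_k(G_2))|$ and taking $H=\mathcal{C}_k(G_1)$ gives $\pi_{G_2}^{(\mathcal{C}_k(G_1))}(k)=1$, forcing $\mathcal{C}_k(G_1)\cong\mathcal{C}_k(G_2)$ for every $k$. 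Your stage one is essentially this same argument and is fine, with one caveat: reducing to a \emph{single} sufficiently large $k$ (rather than the whole family $\{\mathcal{C}_k(G)\}_{k\ge 1}$) is the strictly stronger Conjecture~\ref{conj:finite-col-graphs-determine-G}, so your reduction already asks for more than the statement requires.

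The genuine gap is stage two, and you acknowledge it yourself: recovering $G$ from the abstract isomorphism type of $\mathcal{C}_k(G)$ is precisely the open content of the conjecture, and your proposal only sketches a strategy. Two concrete problems remain unaddressed. First, the $C_4$-based definition of parallelism is not worked out: edges of $\mathcal{C}_k(G)$ arising from the same base vertex pairwise span cliques (one vertex toggling among three or more colors), while induced squares arise both from non-adjacent pairs of base vertices and from adjacent pairs using disjoint color pairs (cf.\ Eq.~\eqref{eq:C4}), so you would need to prove that some joint $C_4$-statistic actually separates the classes rather than merely asserting that adjacent vertices give a ``more restricted commutation pattern.'' Second, even granting such a combinatorial characterization, you must show the resulting partition is invariant under \emph{every} automorphism of $\mathcal{C}_k(G)$, not just those in $S_k\times\operatorname{Aut}(G)$, and that the reconstructed adjacency relation returns $G$ faithfully; you name this obstacle and then explicitly defer it, leaving the general case as future work, so no proof is obtained. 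For context, the paper records that Hogan, Scott, Tamitegama, and Tan~\cite{HSTT24} later resolved Conjecture~\ref{conj:finite-col-graphs-determine-G} (and hence this conjecture) with $f(G)=5|V(G)|^2+1$ by a different argument; your one-coloring-graph reconstruction plan is in the spirit of that result, but as written it does not establish it.
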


\subsection{Related works} 

This paper contributes to the broad literature on graph invariants and specifically those works studying generalizations and refinements of the chromatic polynomial. Core to our construction of $\pi_G^{(H)}(k)$ is the notion of {\em restrained chromatic polynomials} due to Erey \cite{Ere15}. This generalization of the chromatic polynomial counts the number of proper colorings of a graph $G$ where each vertex is restrained by a finite list of forbidden colors. Our polynomial $\pi_G^{(H)}(k)$ is a linear
combination of these restrained chromatic polynomials. Central to Erey's work on restrained chromatic polynomials is the proof that they are, in fact, polynomials for large enough $k$. We rely heavily on this result in proving the eventual polynomiality of our function $\pi_G^{(H)}(k)$. 

Our work is inspired in part by coloring graphs $\mathcal{C}_k(G)$. Coloring graphs generalize the chromatic polynomial in the sense that $\pi_G(k) = |V(\mathcal{C}_k(G))|$. These objects were initially studied in the context of counting and sampling colorings \cites{DFFV06, Mol04, Vig00}. For these applications, it is important for $\mathcal{C}_k(G)$ to be connected, and work has been done to establish bounds on $k$ that guarantee connectedness \cite{CVJ08}. More recently, other features of coloring graphs have been explored, including cycles, cut vertices, and block structures \cites{ABFR18, BFHRS16, BKMRSSX23, BBFKKR19}. By counting instances of induced copies of $H$ within a $k$-coloring graph, $\pi_G^{(H)}(k)$ seeks to translate structural information about the coloring graph into the language of polynomial functions. 

Coloring graphs are one example of the more general notion of reconfiguration graphs. In reconfiguration problems, one considers a set of states of some system equipped with a specific transition rule. From this setup, we construct a reconfiguration graph in which the vertices are states of the system and edges connect states that differ by a single application of a transition rule. Examples  abound from reconfiguration systems for dominating sets ~\cites{ABCHMSS21, HS14}   and shortest paths ~\cite{AEHHNW18}  to reconfiguration systems encoding possible positions of a robotic arm within a restricted space \cites{AG04}. Analogs of $\pi_G^{(H)}(k)$ in other reconfiguration contexts might provide interesting statistics for these systems.

There are many generalizations of the chromatic polynomial, including the important Tutte polynomial (\cite{tutte1954}, see \cite{Bol98}*{Chapter~10}). This two-variable graph invariant appears in other fields, such as knot theory, where one of its specializations yields the famed Jones polynomial for a certain family of knots. The Tutte polynomial can be defined in various ways including via a deletion-contraction process. While the Tutte polynomial is indeed a rich invariant, it exhibits some of the same deficiencies as the chromatic polynomial. In particular, it cannot distinguish between any pair of trees with the same number of edges. In contrast, our chromatic pairs polynomial $\pi_G^{(P_2)}(k)$ can distinguish between trees with distinct degree sequences. 

In another direction, the chromatic symmetric function  \cite{Sta95} and the non-commutative chromatic symmetric function \cite{GS01} extend the chromatic polynomial by encoding each coloring as a monomial with a different variable representing each color. In the non-commutative case, this function provides a complete invariant as it is essentially a list of all possible colorings of the graph (see \cite{GS01}*{Proposition~8.1}). As mentioned above, we conjecture (in Conjecture~\ref{conj:col-graphs-determine-G}) that the family of polynomials $\pi_G^{(H)}(k)$ also provides a complete invariant without retaining such granular coloring data.

\subsection{Structure of the paper} In Section~\ref{sect:prelims}, we streamline notation and discuss preliminary tools, including restrained chromatic polynomials. In Section~\ref{sect:general}, we introduce the concept of an $H$-generator and prove Theorem~\ref{thm:general}, stating that for any graphs $H$ and $G$, the function $\pi_G^{(H)}(k)$, counting the number of occurrences of $H$ as an induced subgraph of $\mathcal C_k(G)$, grows polynomially with $k$. Section~\ref{sec:instantiations} explores $\pi_{G}^{(H)}$ for several choices of $H$, focusing on the cases when $H$ is a 2-path or a cycle. In particular, we compute the chromatic pairs polynomials ($H=P_2$) for null graphs, complete graphs, trees, cycles, and pseudotrees. Section~\ref{sect:invariants} showcases how these polynomials $\pi_{G}^{(H)}$ serve as invariants for various graph families. For example, Theorem~\ref{thm:treedeg} proves that the chromatic pairs polynomial of a tree determines its degree sequence, and Theorem~\ref{thm:cycleuniqueness} establishes the uniqueness of the chromatic pairs polynomial for the cycle graph. Section~\ref{sect:closing-conj} discusses our final conjecture, which predicts that the infinite set of coloring graphs is a complete graph invariant, and we show how this conjecture is equivalent to another conjecture written in terms of our $H$-polynomials.

\medskip 

\textbf{Update on our conjectures.} In response to a posted preprint of this work, Hogan, Scott, Tamitegama, and Tan~\cite{HSTT24} prove new results that resolve our Conjecture~\ref{conj:finite-col-graphs-determine-G}. This recent work also gave an alternative proof of our main result (Theorem~\ref{thm:general}). Their proof did not rely on restrained chromatic polynomials and showed that $\pi_{G}^{(H)}(k)$ is a polynomial for all $k\geq 1$.
Our original preprint established that $\pi_G^{(H)}(k)$ is polynomial for sufficiently large $k$.  We have left our original theorem statement and proof, and we have now added Remark~\ref{remark:thmstatement} after Theorem~\ref{thm:general} observing that our function is in fact also polynomial for all $k\ge 1$. 

\section{Preliminaries}\label{sect:prelims}

We use $\N=\set{1,2,\dots}$ to denote the set of positive integers, and for $k\in \N$, we let $[k]=\set{1,\dots,k}$. For integers $n\ge k\ge 0$, let $\binom{n}{k}=\frac{n!}{k!(n-k)!}$. {For simplicity, let $\binom{n}{k}=0$ when $k>n\ge 0$.}

A (simple, finite) \emph{graph} $G$ consists of a finite vertex set $V(G)$ and edge set $E(G)$, where $E(G)$ is a subset of (unordered) pairs of elements in $V(G)$. For $u,v\in V(G)$, we often use $uv$ as shorthand for $\set{u,v}$  with the understanding that $uv=vu$ for undirected edges. For $v\in V(G)$, we let $N(v)=\set{u\in V(G):uv\in E(G)}$ denote the \emph{neighborhood} of $v$, and we let $\deg(v)=\abs{N(v)}$ denote the \emph{degree} of $v$. For any $U\subseteq V(G)$, we let $G[U]$ denote the subgraph of $G$ \emph{induced} by $U$, that is, the graph with vertex set $U$ and an edge for every edge in $G$ with both ends in $U$. We also write $G-U$ as shorthand for $G[V-U]$ and write $G-v$ as shorthand for $G[V-\set{v}]$ for some $v\in V(G)$. 

In this paper, we will consider several well-studied families of graphs. These families include {null graphs}, where $N_n$ denotes the edgeless $n$-vertex graph; {complete graphs}, where $K_n$ denotes the $n$-vertex graph with $\binom{n}{2}$ edges; {trees}, which are connected graphs with $\abs{E(G)}=\abs{V(G)}-1$; {pseudotrees} (also known as unicyclic graphs), which are trees augmented by one edge; {paths}, where $P_n$ denotes the $n$-vertex tree with max degree 2; {cycle graphs}, where $C_n$ denotes the $n$-vertex graph that is connected and 2-regular; and {hypercube graphs}, most easily defined in terms of Cartesian products. For two graphs $G_1$ and $G_2$, we let $G_1\ \square\ G_2$ denote the Cartesian product of $G_1$ and $G_2$, with vertex set $V(G_1)\times V(G_2)$ and edge set $\set{\set{u_1 u_2,v_1 v_2}\mid u_1=v_1\text{ and }u_2v_2\in E(G_2)\text{, or }u_2=v_2\text{ and }u_1 v_1\in E(G_1)}$. The $d$-dimensional hypercube~$Q_d$ is the Cartesian product of $d$ copies of $P_2$, which has $2^d$ vertices and $d\cdot 2^{d-1}$ edges. 

For $e\in E(G)$, we let $G-e$ denote the graph with vertex set $V(G)$ and edge set $E(G)-\set{e}$, and $G/e$ denotes the graph contracted on edge $e$, in which $e$ is removed and its ends are replaced with a single vertex adjacent to every vertex in the neighborhood of either end of $e$ in $G$. For graph $G$ and $k\in \mathbb N$, a (proper) $k$-coloring of $G$ is a function $c:V(G)\to [k]$ such that $c(u)\ne c(v)$ for all $uv\in E(G)$. The \emph{chromatic polynomial} of $G$ is defined as 
\[
\pi_G(k) = \text{\# distinct $k$-colorings of $G$}
\]
The fact that $\pi_G(k)$ is a polynomial in $k$ can be derived from the \emph{deletion-contraction} principle, which observes that $\pi_{G}(k)=\pi_{G-e}(k)-\pi_{G/e}(k)$ for any graph $G$ and edge $e\in E(G)$ \cite{Rea68}*{Theorem~1}. 

For graph $G$, a \emph{restraint} $r$ is a mapping from $V(G)$ to finite subsets of $\mathbb{N}$. That is, the function $r$ maps each vertex of $G$ to a particular finite subset of $\mathbb{N}$. Fixing graph $G$ and restraint $r$, we define a corresponding \emph{restrained chromatic polynomial} as 
\[
\rho_{G,r}(k)=\abs{\set{c:V(G)\to [k]\text{ s.t. $c$ is a $k$-coloring of $G$ and $c(v)\not\in r(v)$ for all $v\in V(G)$}}}.
\]
We drop the subscript $G$ when clear from context. This function counts the number of proper $k$-colorings in which no vertex is colored one of its colors forbidden by $r$. This function specializes to the usual chromatic polynomial for the trivial restraint $r_0$ with $r_0(v)=\emptyset$ for all $v\in V(G)$. The edge deletion-contraction recurrence for chromatic polynomials can be generalized to establish that $\rho_{G,r}(k)=p(k)$ for some polynomial $p$ and any $k\ge k_0$, where $k_0$ denotes the maximum integer that appears in $r$ \cite{Ere15}*{Theorem~4.1.2}.  We refer to these functions $\rho_{G,r}$ as polynomial for sufficiently large $k$. 

For graph $G$ and $k\in \mathbb{N}$, we can organize the $\pi_G(k)$ distinct $k$-colorings of $G$ in a graph $\mathcal C_k(G)$, called the \emph{$k$-coloring graph of $G$}, whose vertex set corresponds to the set of $k$-colorings of $G$ and whose edge set corresponds to pairs of $k$-colorings that differ on a single vertex of $G$. Figure~\ref{fig:C3P3} below depicts the 3-coloring graph for a 3-path, with each vertex of the coloring graph represented as a labeled rectangle.

\begin{figure}[ht]
\begin{tikzpicture}[baseline=0cm, scale=0.5]

\node (a) at (0,0) {};
\node (b) at (-2,-2) {};
\node (c) at (0,-4) {};
\node (d) at (2,-2) {};
\node (e) at (-5,-6) {};
\node (f) at (5,-6) {};
\draw[style=thick] (b)--(e)--(-3,-8)--(-5,-10)--(5,-10)--(7,-8)--(5,-6)--(d);
\draw[style=thick] (e)--(-7,-8)--(-5,-10);
\draw[style=thick] (5,-6)--(3,-8)--(5,-10);
\draw[style=thick] (-5,-6)--(-3,-8)--(-5,-10);
\draw[style=thick] (a)--(b)--(c)--(d)--(a);
\draw[fill=white] (-1.5,-.5) rectangle (1.5,.5);
\draw[fill=white](-1,0)--(1,0);
\draw[radius=.35,fill=white](-1,0)circle node {1};
\draw[radius=.35,fill=white](0,0)circle node{2};
\draw[radius=.35,fill=white](1,0)circle node{3};

\draw[fill=white] (-3.5,-2.5) rectangle (-.5,-1.5);
\draw[style=thick](-3, -2)--(-1,-2);
\draw[radius=.35,fill=white](-3,-2)circle node{3};
\draw[radius=.35,fill=white](-2,-2)circle node{2};
\draw[radius=.35,fill=white](-1,-2)circle node{3};

\draw[fill=white] (3.5,-2.5) rectangle (.5,-1.5);
\draw[style=thick](3, -2)--(1,-2);
\draw[radius=.35,fill=white](3,-2)circle node{1};
\draw[radius=.35,fill=white](2,-2)circle node{2};
\draw[radius=.35,fill=white](1,-2)circle node{1};

\draw[fill=white] (-1.5,-4.5) rectangle (1.5,-3.5);
\draw[style=thick](-1,-4)--(1,-4);
\draw[radius=.35,fill=white](-1,-4)circle node{3};
\draw[radius=.35,fill=white](0,-4)circle node{2};
\draw[radius=.35,fill=white](1,-4)circle node{1}; 


\draw[fill=white] (-6.5,-6.5) rectangle (-3.5,-5.5);
\draw[style=thick](-6,-6)--(-4,-6);
\draw[radius=.35,fill=white](-6,-6)circle node{3};
\draw[radius=.35,fill=white](-5,-6)circle node{1};
\draw[radius=.35,fill=white](-4,-6)circle node{3};

\draw[fill=white] (-8.5,-8.5) rectangle (-5.5,-7.5);
\draw[style=thick](-8, -8)--(-6,-8);
\draw[radius=.35,fill=white](-8,-8)circle node{2};
\draw[radius=.35,fill=white](-7,-8)circle node{1};
\draw[radius=.35,fill=white](-6,-8)circle node{3};

\draw[fill=white] (-4.5,-8.5) rectangle (-1.5,-7.5);
\draw[style=thick](-2, -8)--(-4,-8);
\draw[radius=.35,fill=white](-2,-8)circle node{2};
\draw[radius=.35,fill=white](-3,-8)circle node{1};
\draw[radius=.35,fill=white](-4,-8)circle node{3};

\draw[fill=white] (-6.5,-10.5) rectangle (-3.5,-9.5);
\draw[style=thick](-6,-10)--(-4,-10);
\draw[radius=.35,fill=white](-6,-10)circle node{2};
\draw[radius=.35,fill=white](-5,-10)circle node{1};
\draw[radius=.35,fill=white](-4,-10)circle node{2}; 


\draw[fill=white] (3.5,-6.5) rectangle (6.5,-5.5);
\draw[style=thick](4,-6)--(6,-6);
\draw[radius=.35,fill=white](4,-6)circle node{1};
\draw[radius=.35,fill=white](5,-6)circle node{3};
\draw[radius=.35,fill=white](6,-6)circle node{1};

\draw[fill=white] (1.5,-8.5) rectangle (4.5,-7.5);
\draw[style=thick](2, -8)--(4,-8);
\draw[radius=.35,fill=white](2,-8)circle node{2};
\draw[radius=.35,fill=white](3,-8)circle node{3};
\draw[radius=.35,fill=white](4,-8)circle node{1};

\draw[fill=white] (5.5,-8.5) rectangle (8.5,-7.5);
\draw[style=thick](8, -8)--(6,-8);
\draw[radius=.35,fill=white](6,-8)circle node{1};
\draw[radius=.35,fill=white](7,-8)circle node{3};
\draw[radius=.35,fill=white](8,-8)circle node{2}; 

\draw[fill=white] (3.5,-10.5) rectangle (6.5,-9.5);
\draw[style=thick](4,-10)--(6,-10);
\draw[radius=.35,fill=white](4,-10)circle node{2};
\draw[radius=.35,fill=white](5,-10)circle node{3};
\draw[radius=.35,fill=white](6,-10)circle node{2}; 
\end{tikzpicture} 
\caption{$\mathcal C_3(P_3)$, the 3-coloring graph for $P_3$, with vertex labels indicating their underlying colorings}\label{fig:C3P3}  
\end{figure}
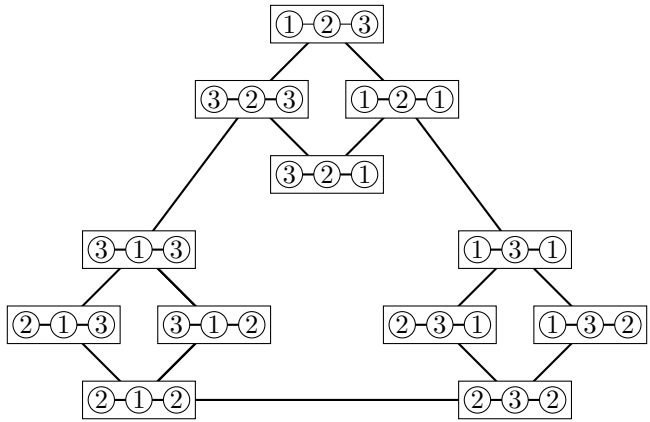

\section{Generalization of the Chromatic Polynomial}\label{sect:general}
In this section, we prove that for any fixed graphs $G$ and $H$, the number of times $H$ appears as an induced subgraph in $\mathcal C_k(G)$, the $k$-coloring graph of $G$, is polynomial in $k$ for sufficiently large $k$. We call this polynomial $\pi_G^{(H)}(k)$ the \emph{chromatic $H$-polynomial} of $G$. This generalizes the chromatic polynomial, with $\pi_G(k)=\pi_G^{(N_1)}(k)$. Our proof demonstrates a finite enumeration of all the ways to color particular subsets of $U\subseteq V(G)$ in order to yield an induced copy of $H$ in $\mathcal C_k(G)$, and it uses restrained chromatic polynomials to count the number of consistent colorings of $V(G-U)$.  We note that while deletion-contraction is used to prove polynomiality of the chromatic polynomial (and restrained chromatic polynomials), deletion-contraction does not hold for the chromatic $H$-polynomial of $G$ for general $H$. Instead, we rely on writing this function as a sum of restrained polynomials, each of which are known to be polynomial for sufficiently large $k$. We give the necessary notation and technical definitions before stating and proving the result. 

\subsection{Definitions} 

Fix graphs $G$ and $H$. Let $U$ be a subset of $V(G)$, and let $C$ be $\abs{V(H)}$ functions from the set of all functions $c\colon U\to \N$.
We call the pair $(U,C)$ an \emph{$H$-generator} in $G$ if the following two conditions hold: 
\begin{itemize}
    \item The elements of $C$ correspond to proper colorings of $G[U]$, in that for every $c\in C$ and $v_iv_{j}\in E(G[U])$, we have $c(v_i)\ne c(v_{j})$; and
    \item $H$ is isomorphic to the graph identified by vertex set $C$ with edges between every pair of colorings that differ on their assignment for a single vertex in $U$.
\end{itemize}
We further call $(U,C)$ a \emph{minimal $H$-generator} if the following two additional conditions hold:
\begin{itemize}
\item $U$ includes only vertices that take on at least two colors in $C$, that is, for every $v\in U$ there exist $c_1,c_2\in C$ with $c_1(v)\ne c_2(v)$; and 
    \item $C$ uses its full range of colors, that is, if $\kappa\in\N$ is in the image of some $c\in C$, then any value less than $\kappa$ must be in the image of some $c'\in C$. 
\end{itemize}

Intuitively, an $H$-generator describes a set of coloring changes in a subgraph of $G$ that corresponds to a realized induced copy of $H$ in the coloring graph of $G$.   
For example, consider $G=P_3$ (with vertices $v_1$, $v_2$, and $v_3$ in their natural, left-to-right ordering) and $H=P_3$.  Note that $G=H$ in this example, but $G$ corresponds to the base graph and $H$ is the subgraph we are counting in $\mathcal C_k(G)$. Let $U=\{v_1, v_3\}$ and $C=\{c_1, c_2, c_3\}$ where:
\begin{align}
\label{eq:c_example}
   & c_1(v_1)=2\ ,\ c_1(v_3) = 2 \nonumber\\& c_2(v_1)=2\ ,\ c_2(v_3) = 3 \\  &c_3(v_1)=3\ ,\ c_3(v_3) = 3 \nonumber\ .
\end{align} 
This $(U,C)$ is a $P_3$-generator (but not minimal as color 1 is not used), corresponding to at least one copy of $P_3$ in $\mathcal C_k(P_3)$ where $v_1$ and $v_3$ alternately change between colors 2 and 3.  There is only one such instance in $\mathcal C_3(P_3)$, which is indicated in the subgraph of Figure~\ref{fig:C3P3} reproduced in Figure~\ref{fig:C3P3-gen}. Other choices of $U$ and $C$ would correspond to additional copies of $P_3$ in $C_k(P_3)$.

\begin{figure}[ht]
\begin{tikzpicture}[baseline=0cm, scale=0.5]
\draw[style=ultra thick] (-5,0)--(-7,-2)--(-5,-4);

\draw[fill=white] (-6.5,-.5) rectangle (-3.5,.5);
\draw[style=thick](-6,0)--(-4,0);
\draw[radius=.35,fill=white](-6,0)circle node{3};
\draw[radius=.35,fill=white](-5,0)circle node{1};
\draw[radius=.35,fill=white](-4,0)circle node{3};

\draw[fill=white] (-8.5,-2.5) rectangle (-5.5,-1.5);
\draw[style=thick](-8, -2)--(-6,-2);
\draw[radius=.35,fill=white](-8,-2)circle node{2};
\draw[radius=.35,fill=white](-7,-2)circle node{1};
\draw[radius=.35,fill=white](-6,-2)circle node{3};

\draw[fill=white] (-6.5,-4.5) rectangle (-3.5,-3.5);
\draw[style=thick](-6,-4)--(-4,-4);
\draw[radius=.35,fill=white](-6,-4)circle node{2};
\draw[radius=.35,fill=white](-5,-4)circle node{1};
\draw[radius=.35,fill=white](-4,-4)circle node{2}; 
\end{tikzpicture} 
\caption{The induced $P_3$ in $\mathcal C_3(P_3)$ generated by $(\set{v_1,v_3},C)$, for $C=\set{c_1,c_2,c_3}$ as in Eqs.~\eqref{eq:c_example}}\label{fig:C3P3-gen}  
\end{figure}
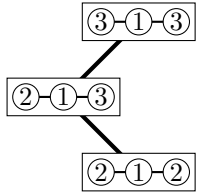

In counting the occurrences of $H$ in the coloring graphs for $G$, our result considers the restraints imposed by each $H$-generator. Specifically, for $H$-generator $(U,C)$, we specify a restraint on $G-U$ given by 
\begin{equation}r_{(U,C)}(v)=\set{j:c(u)=j\text{ for some $u\in N(v)\cap U$ and $c\in C$}} \text{ for $v\in V(G)-U$.}
\end{equation}
 This restraint corresponds to forbidding any vertex not in $U$ from having any of the colors assigned by $C$ to any neighbor of the vertex in $U$.

Returning to the example of $G=H=P_3$ with $U=\{v_1, v_3\}$, and $C=\{c_1, c_2, c_3\}$ as defined in Eq.~\eqref{eq:c_example}, in this case, $V(G)-U=\set{v_2}$ and $r_{(U,C)}$ is defined by $r_{(U,C)}(v_2)=\{2,3\}$.  In Figure \ref{fig:C3P3}, this corresponds to the copy of $P_3$ shown in Figure \ref{fig:C3P3-gen}, which occurs only where $v_2$ does not take colors 2 or 3. 

We say a minimal generator $(U,C)$ uses $\kappa$ colors if $\kappa$ is the largest value in the image of any $c\in C$. In the special case that $U=\emptyset$, we let $c_{\emptyset}$ denote the coloring with empty domain and say $(\emptyset,\set{c_\emptyset})$ uses zero colors, as in Corollary~\ref{cor:specializes} of our main result. Let $k_0(G,H)$ denote the largest value of $\kappa$ such that $G$ has a minimal $H$-generator that uses $\kappa$ colors, dropping the arguments $G$ and $H$ when clear from context.  This value $k_0$ is bounded above by $2|E(H)|$,  because every minimal generator must yield $|E(H)|$ edges that each correspond to a (not necessarily distinct) vertex of $G$ swapping between one of 2 colors. Then we let $\mathcal G^{(G,H)}$ denote the set of all minimal $H$-generators in $G$, dropping the superscript when clear from context, and we partition this set as $\mathcal G=\mathcal G_0\cup\mathcal G_1\cup \mathcal G_2\cup \cdots \cup \mathcal G_{k_0}$, where $\mathcal G_{\kappa}$ corresponds to minimal $H$-generators that use $\kappa$ colors. 

We also define a particular permutation that will be useful in establishing a bijection for our main result. For natural numbers $\kappa\le k$ and color palette $\mathcal P=\set{j_1,\dots,j_\kappa}$ with $j_1<\dots<j_\kappa\le k$, we let $\sigma_{\mathcal P}$ denote the permutation of $[k]$ defined such that 
\begin{align}
    &\sigma_{\mathcal P}(j_i)=i\text{ for }i\in[\kappa]\text{ , and } \notag \\
    &\sigma_{\mathcal P}(j)<\sigma_{\mathcal P}(j')\text{ for any $j<j'$ with $j,j'\not\in {\mathcal P}$.}\label{eq:palette-perm}
\end{align} This is the unique permutation that maps the elements of $\mathcal P$ to $[\kappa]$ while maintaining the order of colors in $\mathcal P$ and the order of colors not in $\mathcal P$.

\subsection{Polynomiality of Induced Subgraph Counts}
We now prove that the number of induced copies of a graph $H$ in $\mathcal C_k(G)$ grows as a polynomial function in $k$.  This polynomial function can be written as a sum of restrained chromatic polynomials whose restraints correspond to minimal $H$-generators.  The key idea of the proof is that every isomorphic copy of $H$ in $\mathcal C_k(G)$ corresponds to a unique $H$-generator by applying $\sigma_{\mathcal P}$ to the palette $\mathcal P$ used by that copy of $H$ in $\mathcal C_k(G)$.

\begin{theorem}\label{thm:general} Fix graphs $G$ and $H$, and let $\pi_{G}^{(H)}(k)$ denote the number of {induced} subgraphs of $\mathcal{C}_k(G)$ that are isomorphic to $H$. Then $\pi_{G}^{(H)}(k)$ is a polynomial function of $k$ for sufficiently large $k$. In particular,
\begin{equation}
\label{eq:thm}
\pi_G^{(H)}(k) = \sum_{\kappa=0}^{2|E(H)|}\sum_{(U,C)\in \mathcal G_{\kappa}} \binom{k}{\kappa}\cdot \rho_{r_{(U,C)}}(k) \ .
\end{equation}
\end{theorem}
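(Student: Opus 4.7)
The plan is to establish a bijection between induced copies of $H$ in $\mathcal{C}_k(G)$ and triples $((U,C),\mathcal{P},c^*)$, where $(U,C) \in \mathcal{G}_\kappa$ is a minimal $H$-generator using $\kappa$ colors, $\mathcal{P}\subseteq [k]$ is a palette of size $\kappa$, and $c^*$ is a proper $k$-coloring of $G-U$ compatible with the restraint $r_{(U,C)}$. Counting each side of the bijection yields the stated formula, after which polynomiality follows from the known polynomiality of restrained chromatic polynomials.

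Starting from an induced copy of $H$ in $\mathcal{C}_k(G)$, I would extract the following canonical data: let $U$ be the set of vertices of $G$ whose color is not constant across the $|V(H)|$ colorings represented in the copy, let $c^*$ be the common restriction of those colorings to $V(G)-U$ (well-defined by the choice of $U$), let $C$ be the set of their restrictions to $U$, and let $\mathcal{P}$ be the set of colors appearing in $C$. Observing that two colorings of $G$ that agree on $V(G)-U$ are adjacent in $\mathcal{C}_k(G)$ exactly when their restrictions to $U$ differ on a single vertex, the pair $(U,C)$ satisfies the defining conditions of an $H$-generator. Applying the permutation $\sigma_\mathcal{P}$ to every coloring in $C$ then produces the unique minimal $H$-generator $(U,\sigma_\mathcal{P}(C)) \in \mathcal{G}_\kappa$ associated to the copy, where $\kappa = |\mathcal{P}|$.

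For the inverse direction, given $(U,C)\in \mathcal{G}_\kappa$, a palette $\mathcal{P}$ of size $\kappa$, and a valid coloring $c^*$ of $G-U$, I would build a copy of $H$ in $\mathcal{C}_k(G)$ by combining $c^*$ on $V(G)-U$ with each coloring in $\sigma_\mathcal{P}^{-1}(C)$ on $U$. Each resulting assignment is a proper $k$-coloring of $G$ because $c^*$ is proper on $G-U$, each coloring in $C$ is proper on $G[U]$, and the restraint condition on $c^*$ prevents conflicts across edges with one endpoint in $U$ and the other in $V(G)-U$. The resulting set of $|V(H)|$ colorings induces a subgraph isomorphic to $H$, again because two of them are adjacent in $\mathcal{C}_k(G)$ precisely when they differ on a single vertex of $U$. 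Counting: for fixed $(U,C) \in \mathcal{G}_\kappa$, the number of palettes $\mathcal{P}$ of size $\kappa$ in $[k]$ is $\binom{k}{\kappa}$, and for each $\mathcal{P}$ the number of valid extensions $c^*$ equals $\rho_{r_{(U,C)}}(k)$ — the same for every $\mathcal{P}$ since the count of restrained colorings is invariant under global permutation of color labels. Summing over $\kappa \in \{0,1,\ldots,2|E(H)|\}$ and $(U,C)\in \mathcal{G}_\kappa$ gives the claimed identity, and polynomiality for sufficiently large $k$ follows because each $\rho_{r_{(U,C)}}(k)$ is polynomial for large $k$ by Erey's theorem cited earlier, $\binom{k}{\kappa}$ is a polynomial in $k$, and the outer sum is finite since $\kappa \le 2|E(H)|$.

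The main obstacle I expect is establishing the bijection, particularly verifying the \emph{induced} condition in both directions — that is, confirming that the colorings reconstructed from a minimal generator pair up as adjacent in $\mathcal{C}_k(G)$ exactly as specified by $H$, with no spurious edges arising from vertex changes outside $U$ or from accidental coincidences at vertices in $U$ between colorings not intended to be adjacent. The minimality conditions on $(U,C)$ are essential here: restricting to vertices that actually take at least two values in $C$ and to palettes without gaps makes $\sigma_\mathcal{P}$ a well-defined canonicalization and ensures that distinct minimal generators produce disjoint collections of induced copies. Boundary cases such as $U=\emptyset$ (where $H=N_1$ and the formula reduces to the usual chromatic polynomial) require a brief separate remark but are otherwise routine.
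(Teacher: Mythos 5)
Your overall strategy is the same as the paper's: enumerate minimal $H$-generators, canonicalize an occurrence of $H$ via the palette permutation $\sigma_{\mathcal P}$, and count the frozen part with restrained chromatic polynomials, concluding polynomiality from Erey's theorem. However, as written your bijection has a concrete bookkeeping error: you relabel only the colorings on $U$ by $\sigma_{\mathcal P}$ while leaving the frozen coloring $c^*$ untouched, and you require $c^*$ to be compatible with the canonical restraint $r_{(U,C)}$ (whose forbidden colors lie in $[\kappa]$). This is inconsistent with placing the palette-$\mathcal P$ colorings $\sigma_{\mathcal P}^{-1}(C)$ on $U$. For example, take $G=H=P_2$ with edge $uv$, the unique minimal generator $U=\{u\}$, $C=\{u\mapsto 1,\,u\mapsto 2\}$, $r_{(U,C)}(v)=\{1,2\}$, and $k=4$ with $\mathcal P=\{3,4\}$: the coloring $c^*(v)=3$ satisfies the restraint, yet your inverse construction produces the assignment $u\mapsto 3$, $v\mapsto 3$, which is not a proper coloring, so the claimed properness ``because of the restraint condition'' fails. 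Symmetrically, your forward map does not land in the claimed codomain: the occurrence with $v$ frozen at color $1$ and $u$ swapping between $3$ and $4$ yields $c^*(v)=1$, which violates $r_{(U,C)}(v)=\{1,2\}$.

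The fix is exactly the move the paper makes: apply $\sigma_{\mathcal P}$ to the \emph{entire} colorings, so the datum recorded for the frozen vertices is $\sigma_{\mathcal P}(c^*)$ rather than $c^*$ (and, inversely, reconstruct the occurrence by applying $\sigma_{\mathcal P}^{-1}$ to both $C$ and the restrained coloring of $G-U$). Since $\sigma_{\mathcal P}$ is a bijection of $[k]$, properness across edges between $U$ and $V(G)-U$ in the original occurrence translates precisely into compatibility with $r_{(U,C)}$, and the count of compatible colorings of $G-U$ is $\rho_{r_{(U,C)}}(k)$ independent of $\mathcal P$. Your closing remark about invariance of the restrained count under a global permutation of color labels shows you sensed this dependence on $\mathcal P$, but the bijection must be restated (compatibility taken with respect to the palette-translated restraint, or equivalently the frozen coloring conjugated by $\sigma_{\mathcal P}$) before the identity $\binom{k}{\kappa}\rho_{r_{(U,C)}}(k)$ per generator, and hence Eq.~\eqref{eq:thm}, is justified. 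A minor additional point: you should also note (as the paper does) that each $\mathcal G_\kappa$ is finite, e.g.\ because $\abs{U}\le \abs{E(H)}$ and each $c\in C$ maps $U$ into $[2\abs{E(H)}]$, so the double sum is a finite sum of eventually-polynomial functions.
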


\begin{proof}
We will show that for fixed graphs $G$ on $n$ vertices and $H$ on $\ell$ vertices and $m$ edges, there is a finite set of minimal $H$-generators that use at most $2m$ colors and none that use more.  
We argue each such generator contributes a number of occurrences of $H$ as an induced subgraph of $\mathcal C_k(G)$ that is polynomial in $k$ for $k\ge k_0$, and these contributions account for all occurrences of $H$ in $\mathcal C_k(G)$, establishing polynomiality of 
$\pi_G^{(H)}(k)$ for $k\ge k_0$.

To enumerate the minimal generators $(U,C)\in\mathcal G$, observe that there are {$\sum_{s=0}^m \binom{n}{s}$} candidate choices for $U\subseteq V(G)$ of size at most $m$, and for each $U$ there are at most $\binom{(2m)^m}{\ell}$ choices for $C$.  This is because each of $\ell$ functions must assign each of at most $m$ vertices to one of at most $2m$ colors. Hence, $\abs{\mathcal G}$ is finite.

Because each minimal generator uses only colors in $[k_0]$ (recalling $k_0\le 2m$), each $\rho_{r_{(U,C)}}(k)$ is guaranteed to be polynomial for $k\ge k_0$ \cite{Ere15}.  Therefore, it suffices to show that each $(U,C)\in \mathcal G_\kappa$ accounts for exactly $\binom{k}{\kappa}\cdot \rho_{r_{(U,C)}}(k)$ occurrences of $H$ in $\mathcal C_k(G)$ and that there are no other occurrences of $H$. 

For $k\ge k_0$, fix any particular occurrence of $H$ as an induced subgraph of $\mathcal C_k(G)$, and let $\mathcal P$ be the palette of $\kappa\le 2m$ colors used for the vertices of $G$ that change color in this induced subgraph. 
Applying $\sigma_{\mathcal{P}}$ to the colorings associated with the vertices of this subgraph yields a unique minimal generator and choice of coloring of the vertices of $G$ frozen in the occurrence of $H$ in $\mathcal C_k(G)$. 
Likewise, any choice of minimal generator $(U,C)$ combined with one of the $\rho_{r_{(U,C)}}(k)$ compatible assignments for $V(G)-U$ contributes $\binom{k}{\kappa}$ distinct occurrences of $H$ as an induced subgraph in $\mathcal C_k(G)$. 
Hence we have our desired expression for $k\ge k_0$. \end{proof}

\begin{remark}\label{remark:thmstatement}
Hogan, Scott, Tamitegama, and Tan~\cite{HSTT24} strengthened the statement of Theorem~\ref{thm:general} to hold for all $k\ge 1$, as opposed to sufficiently large $k$.  We note that our proof holds for $k\ge 1$ as well.  The restrained chromatic polynomials appearing in Eq.~\eqref{eq:thm} are all polynomial for $k\ge \kappa$, and the binomial term $\binom{k}{\kappa}$ is identically 0 for all $k < \kappa$.
\end{remark}

We note that the chromatic $N_1$-polynomial by definition is the chromatic polynomial. To check that the right-hand side of Eq.~\eqref{eq:thm} evaluates to $\pi_G(k)$, note that any minimal $N_1$-generator must have exactly $\abs{V(N_1)}=1$ coloring $c\in C$. Any $v\in U$ must take on at least two colors in $C$, so we have $U=\emptyset$ and $C$ consists of the unique coloring with empty domain, with this generator using zero colors.  {(Note in fact that $N_1$ is the {\em only} choice of $H$ with $\mathcal G_0\ne \emptyset$.)} Hence, the summation consists of this unique $(U,C)\in \mathcal G_0$. Then $\binom{k}{0}=1$, and because $U=\emptyset$, $r_{(U,C)}$ imposes no constraints, we have $\rho_{r_{(U,C)}}(k)=\pi_G(k)$.

\begin{corollary}\label{cor:specializes}
    For any graph $G$, the chromatic $N_1$-polynomial of $G$ equals the chromatic polynomial of $G$.
\end{corollary}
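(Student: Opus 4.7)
The plan is to verify the corollary directly from the definitions, using Theorem~\ref{thm:general} to identify the minimal $N_1$-generators. By definition, $\pi_G^{(N_1)}(k)$ counts induced copies of the single-vertex graph $N_1$ inside $\mathcal{C}_k(G)$; these are in bijection with the vertices of $\mathcal{C}_k(G)$, which are exactly the proper $k$-colorings of $G$. So in principle the identity $\pi_G^{(N_1)}(k)=\pi_G(k)$ is already immediate from the definition of the coloring graph. The nontrivial content is checking that the sum formula in Eq.~\eqref{eq:thm} also collapses to $\pi_G(k)$, which serves as a sanity check on the theorem.

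First I would enumerate the minimal $N_1$-generators $(U,C)$. Since $|V(N_1)|=1$, every admissible $C$ contains exactly one coloring $c\colon U\to\mathbb{N}$. The minimality condition that each $v\in U$ takes at least two colors across $C$ cannot be satisfied by a single coloring unless $U=\emptyset$, so we must have $U=\emptyset$ and $C=\{c_\emptyset\}$. This is the unique minimal $N_1$-generator, it uses zero colors, and it lies in $\mathcal{G}_0$; all $\mathcal{G}_\kappa$ for $\kappa\geq 1$ are empty.

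Finally I would plug this into Eq.~\eqref{eq:thm}. The only surviving term has $\kappa=0$ and contributes $\binom{k}{0}\cdot \rho_{r_{(\emptyset,\{c_\emptyset\})}}(k)=1\cdot \rho_{r_{(\emptyset,\{c_\emptyset\})}}(k)$. Since $U=\emptyset$, the induced restraint $r_{(\emptyset,\{c_\emptyset\})}$ forbids no colors at any vertex of $G$, so the restrained chromatic polynomial reduces to the ordinary chromatic polynomial $\pi_G(k)$. Combining these observations yields $\pi_G^{(N_1)}(k)=\pi_G(k)$, as claimed. There is no real obstacle here; the only subtle point is convincing oneself that the minimality condition genuinely forces $U=\emptyset$ when $|C|=1$, which is purely a matter of unpacking the definition.
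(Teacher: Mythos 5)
Your proposal is correct and follows essentially the same route as the paper: both note that $\pi_G^{(N_1)}(k)$ counts vertices of $\mathcal{C}_k(G)$ by definition, and both verify Eq.~\eqref{eq:thm} by showing the unique minimal $N_1$-generator is $(\emptyset,\{c_\emptyset\})$, whose term $\binom{k}{0}\rho_{r_{(\emptyset,\{c_\emptyset\})}}(k)$ reduces to $\pi_G(k)$ since the empty restraint imposes no constraints.
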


Before studying particular instantiations of chromatic $H$-polynomials, we make a few observations about these polynomials that apply to any $H$ and $G$ that do not rely on the polynomiality of these functions. In particular, we observe that $\pi_{G}^{(H)}$ is monotonic in three different senses.
\begin{enumerate}
    \itemsep 0.2cm
    \item \emph{(Monotonic in $k$)} Because $\mathcal C_k(G)$ is an induced subgraph of $\mathcal C_{k+1}(G)$, we have $\pi_G^{(H)}(k+1)\geq \pi_{G}^{(H)}(k)$ for each $k\in\mathbb{N}$. 
    \item \emph{(Monotonic in $H$)}  For any induced subgraph $H_1$ of graph $H_2$, every minimal $H_2$-generator in $G$ contains a minimal $H_1$-generator in $G$, so $\pi_{G}^{(H_1)}(k)\geq \pi_{G}^{(H_2)}(k)$ for each $k\in\mathbb{N}$.
    \item \emph{(Monotonic in $G$)} If $G_1$ is a subgraph of $G_2$ with the same vertex set, then $\mathcal C_k(G_2)$ is an induced subgraph of $ \mathcal C_k(G_1)$. Thus, $\pi_{G_1}^{(H)}(k)\geq \pi_{G_2}^{(H)}(k)$ for each $k\in\mathbb{N}$.
\end{enumerate}

\section{Instantiations}\label{sec:instantiations}

In this section, we illustrate how to instantiate new generalizations of this polynomial $\pi_G^{(H)}(k)$ beyond the well-studied chromatic polynomial with $H=N_1$. We begin with the smallest subgraph with an edge, $H=P_2$. The polynomial $\pi_G^{(P_2)}(k)$ counts the number of edges in the coloring graph $\mathcal C_k(G)$, which equivalently counts the number of pairs of $k$-colorings of $G$ that differ on exactly one vertex. For that reason, we refer to $\pi_G^{(P_2)}$ as the \emph{chromatic pairs polynomial}. 

We give a general formula in Eq.~\eqref{eq:edge_formula} for the chromatic pairs polynomial  as a sum of particular restrained chromatic polynomials, and we derive closed-form expressions for the chromatic pairs polynomial for certain families of graphs. 
We remark that these results immediately yield results for $H=N_2$, which counts the number of pairs of colorings that differ on {\em more} than a single vertex, because $\pi_G^{(N_2)}(k)=\binom{\pi_G^{(N_1)}(k)}{2}-\pi_G^{(P_2)}(k).$

After our discussion of chromatic pairs polynomials, we discuss how our general polynomial result can additionally be applied to count cliques and small cycles in coloring graphs.

\subsection{Counting Edges in Coloring Graphs}

To provide a general expression counting the number of pairs of colorings of a graph that differ on a single vertex, we first observe that minimal $P_2$-generators all have a particular form, regardless of the base graph $G$.  Any edge in the coloring graph corresponds to one vertex swapping between two colors, so minimality requires that any minimal $P_2$-generator $(U,C)$ is such that $U=\set{v}$, where $v$ is a single color-changing vertex, and $C=\set{c_1,c_2}$ with $c_1(v)=1,c_2(v)=2$.  Correspondingly let $r_{(\set{v},\set{c_1,c_2})}= r_{v}^{(2)}$ denote the restraint on $G-v$ that forbids neighbors of $v$ from taking colors 1 or 2. 
Then Theorem~\ref{thm:general} simplifies to the following formula counting edges in a coloring graph:
\begin{equation}
\label{eq:edge_formula}
\pi_G^{(P_2)}(k) = \binom{k}{2} \sum_{v\in V(G)} \rho_{r_{v}^{(2)}}(k) \ .
\end{equation}

In general, the forms of the individual $\rho_{r_v^{(2)}}$ are vertex- and graph-dependent.  We now provide a few specific examples for certain classes of graphs.

\subsubsection{Counting chromatic pairs for null and complete graphs.} The $\rho_{r_v^{(2)}}$ are particularly easy to compute for highly structured graphs such as null and complete graphs.  The vertices in a null graph have no neighbors, so when one vertex changes colors, the remaining $n-1$ vertices each have their full palette available.  Therefore, for any vertex $v$ in a null graph, $\rho_{r_v^{(2)}}(k)=\pi_{N_{n-1}}(k)=k^{n-1}$.  This yields the following chromatic pairs polynomial:
\begin{equation}\pi_{N_n}^{(P_2)}(k) = n \binom{k}{2} k^{n-1}  \ .
\end{equation}
On the other hand, any vertex of a complete graph restrains all other vertices from taking two colors.  Thus $\rho_{r_v^{(2)}}(k)=\pi_{K_{n-1}}(k-2)=(k-2)(k-3)\cdots(k-n)$.  The chromatic pairs polynomial is then given by:
\begin{equation}
\pi_{K_n}^{(P_2)}(k) = n \binom{k}{2} (k-2)\cdots (k-n) = \frac n 2 k(k-1)(k-2)\cdots (k-n) \ . 
\end{equation}

\subsubsection{Counting chromatic pairs for trees.} Null and complete graphs are vertex-transitive, meaning that for any two vertices there is an automorphism of the graph mapping one to another. This is not true for trees in general, so the restrained chromatic polynomials differ across vertices. However, we show that for any vertex $v$ in a tree $T$, $\rho_{r_v^{(2)}}$ depends only on the degree of $v$ in $T$. That is because each neighbor of $v$ has 2 colors forbidden by $v$ and then serves as the root of a subtree.  Each non-root node of these subtrees has exactly $k-1$ color options because it is restrained only by its parent. Hence, for any tree~$T$ with $\abs{V(T)}=n$ vertices, the number of edges in its $k$-coloring graph is given by
\begin{equation}
\label{eq:tree_formula}
    \pi_T^{(P_2)}(k) = \binom{k}{2} \sum_{v\in V(T)} (k-2)^{\deg(v)}(k-1)^{n-\deg(v)-1} \ .
\end{equation}

\subsubsection{Counting chromatic pairs for cycles and other pseudotrees.} We can similarly reason about the chromatic pairs polynomial for pseudotrees, which are connected graphs with equal numbers of vertices and edges. We first examine the special case of cycle graphs. 
We define the following restrained chromatic polynomials to help with the notation of the argument:
\begin{itemize}
\item $\sigma_n(k)$ is the number of $k$-colorings of $P_n$ with restraints $\set{1,2}$ for each leaf; and 
\item $\tau_n(k)$ is the number of $k$-colorings of $P_n$ with restraint $\set{1}$ for the first leaf and $\set{2}$ for the second.
\end{itemize}

Figure~\ref{fig:tau} illustrates how to write $\pi_{C_n}^{(P_2)}$ in terms of $\sigma_{n-1}$ and $\tau_n$ in terms of $\pi_{C_{n+2}}$, respectively. First, observe that each edge in $\mathcal C_k(C_n)$ corresponds to one of $n$ vertices swapping between any of $\binom{k}{2}$ pairs of colors.  The resulting number of ways to $k$-color the rest of the graph is exactly the number of ways to color $P_{n-1}$ such that neither leaf takes colors 1 or 2.  Hence, we can write the chromatic pairs polynomial for $C_n$ in terms of $\sigma_n$ as follows: 
\begin{equation}\label{eq:edgecycle}
\pi_{C_n}^{(P_2)}(k)=n \binom{k}{2} \sigma_{n-1}(k) \ .
\end{equation}
Next, note that we can color an $(n+2)$-cycle by fixing the colors of two adjacent vertices and coloring the remaining $P_n$ such that each leaf has a single distinct restraint, so we can write $\tau_n$ as follows:
\begin{equation}\label{eq:tau}
\tau_n(k)=\frac 1 {k(k-1)} \pi_{C_{n+2}}(k)\ .
\end{equation}

 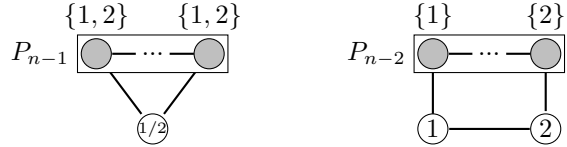
\begin{figure}[ht]
 \begin{tabular}{cc}
\begin{tikzpicture}[baseline=0cm, scale=0.5]
\node (a) at (0,0) {a};
\node (b) at (1.5,0) {b};
\node (c) at (3,0) {c};
\node (x) at (1.5,-2) {x};
\draw[style=thick] (a)--(x)--(c);
\draw[fill=white] (-.5,-.5) rectangle (3.5,.5);
\draw[style=thick] (a)--(c);
\draw[radius=.4,fill=black!25](0,0) circle;
\draw[radius=.4,fill=black!25] (3,0) circle;
\draw[radius=.4,fill=white] (1.5,-2) circle node{\tiny{1/2}};
\node at (0,1) {$\set{1,2}$};
\draw[radius=.45,fill=white,draw=none] (1.5,0) circle;
\node at (1.5,0) {...};
\node at (-1.5,0) {$P_{n-1}$};
\node at (3,1) {$\set{1,2}$};
\end{tikzpicture} \qquad & \qquad 
\begin{tikzpicture}[baseline=0cm, scale=0.5]
\node (a) at (0,0) {a};
\node (b) at (1.5,0) {b};
\node (c) at (3,0) {c};
\node (x) at (0,-2) {x};
\node (y) at (3,-2) {7};
\draw[style=thick] (a)--(x)--(y)--(c);
\draw[fill=white] (-.5,-.5) rectangle (3.5,.5);
\draw[style=thick] (a)--(c);
\draw[radius=.4,fill=black!25](0,0) circle;
\draw[radius=.4,fill=black!25] (3,0) circle;
\draw[radius=.4,fill=white] (0,-2) circle node{1};
\draw[radius=.4,fill=white] (3,-2) circle node{2};
\node at (0,1) {$\set{1}$};
\draw[radius=.45,fill=white,draw=none] (1.5,0) circle;
\node at (1.5,0) {...};
\node at (-1.5,0) {$P_{n-2}$};
\node at (3,1) {$\set{2}$};\end{tikzpicture}
\end{tabular}
\caption{Explaining $\sigma$ and $\tau$ in terms of cycles (Eqs.~\eqref{eq:edgecycle}~and~\eqref{eq:tau})}\label{fig:tau}
\end{figure}

Note that the chromatic polynomial for a cycle is $\pi_{C_n}(k)=(k-1)^n+(-1)^n(k-1)$ \cite{Rea68}*{Theorem~6}, yielding a closed-form expression for $\tau_n(k)$. 
We can further express $\sigma$ in terms of $\tau$ with the following inclusion-exclusion argument in order to give a closed-form expression for $\pi_{C_n}^{(P_2)}$. There are $k(k-1)^{n-1}$ unrestrained colorings of $P_n$.  Out of these colorings, there are $2(k-1)^{n-1}$ colorings that violate a restraint of $\sigma$ by coloring the left leaf 1 or 2, with another $2(k-1)^{n-1}$ colorings that violate the same right leaf restraint.  Subtracting these two sets from the total yields $(k-4)(k-1)^{n-1}$ colorings that satisfy the restraints $\{1,2\}$ for each leaf.  However, this undercounts the total number of such colorings because these two sets with violating colorings have a nonempty intersection.  We need to add back in the colorings in this overlap which include two types, depicted in Figure~\ref{fig:overlap}:

\begin{itemize}
    \item Colorings in which one leaf is 1 and the other is 2, and
    \item Colorings in which both leaves are 1 or both leaves are 2.
\end{itemize} 
\begin{figure}[ht]
\begin{tikzpicture}[baseline=0cm, scale=0.5]
\node (a) at (0,0) {a};
\node (b) at (1.5,0) {b};
\node (c) at (3,0) {c};
\node (d) at (4.5,0) {d};
\node (e) at (6,0) {e};
\node (f) at (7.5,0) {f};
\draw[fill=white] (1,-.5) rectangle (6.5,.5);
\draw[style=thick] (a)--(f);
\draw[radius=.4,fill=white](0,0) circle node{1}; 
\draw[radius=.4,fill=black!25] (1.5,0) circle;
\node at (1.5,1) {$\set{1}$};
\draw[radius=.45,fill=white,draw=none] (3.75,0) circle;
\node at (3.75,0) {...};
\node at (3.75,-1) {$P_{n-2}$};
\node at (6,1) {$\set{2}$};
\draw[radius=.4,fill=black!25](6,0) circle;
\draw[radius=.4,fill=white](7.5,0) circle node{2};
\end{tikzpicture}
\qquad\qquad\begin{tikzpicture} [baseline=0cm, scale=0.45]
\node (a) at (0,0) {a};
\node (b) at (1.5,0) {b};
\node (c) at (3,0) {c};
\node (d) at (4.5,0) {d};
\node (e) at (6,0) {e};
\node (f) at (7.5,0) {f};
\draw[fill=white] (1,-.5) rectangle (5,.5);
\draw[style=thick] (a)--(f);
\draw[radius=.4,fill=white](0,0) circle node{1}; 
\draw[radius=.4,fill=black!25] (1.5,0) circle;
\node at (1.5,1) {$\set{1}$};
\draw[radius=.45,fill=white,draw=none] (3,0) circle;
\node at (3,0) {...};
\node at (3,-1) {$P_{n-3}$};
\draw[radius=.4,fill=black!25] (4.5,0) circle;
\node at (4.5,1) {$\set{c}$};
\draw[radius=.4,fill=white](6,0) circle node{$c$};
\draw[radius=.4,fill=white](7.5,0) circle node{1};
\end{tikzpicture}
\caption{Colorings of $P_n$ that violate $\set{1,2}$ restraints for both leaves}\label{fig:overlap}
\end{figure}
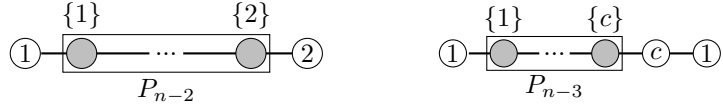

The number of colorings in which one leaf in $P_n$ is 1 and the other is 2 (left side of Figure~\ref{fig:overlap}) is the number of colorings of $P_{n-2}$ in which one leaf is not 1 and the other is not 2, which is exactly $2\tau_{n-2}(k)$, where the factor 2 accounts for which leaf is colored 1. 
The number of colorings in which both leaves are 1 (right side of Figure~\ref{fig:overlap}) is the number of colorings of $P_{n-3}$ in which one leaf is not 1, respecting the constraint of the left leaf of $P_n$, and the other leaf is not $c\ne 1$ for any of $k-1$ choices the neighbor of the right leaf in $P_n$. The number of colorings in which both leaves are 1 or both leaves are 2 is thus $2(k-1)\tau_{n-3}(k)$.  

Adding back these overlap colorings yields an expression for $\sigma$ in terms of $\tau$ as follows:
\[
\sigma_n(k) = (k-4)(k-1)^{n-1}+2(k-1)\tau_{n-3}(k)+2\tau_{n-2}(k) \ .
\]
Then substituting this into our closed-form expression for $\tau$ into Eq.~\eqref{eq:edgecycle}, we have the following chromatic pairs polynomial for cycle graphs:
\[
\pi_{C_n}^{(P_2)}(k)=\frac n 2 k(k-4)(k-1)^{n-1}+2n(k-1)^{n-1}+(-1)^n n(k-1)(k-2).
\]

We now use this result for cycle graphs to give a chromatic pairs polynomial for any pseudotree. For pseudotree $G$ on $n$ vertices with a unique cycle of length $\ell\le n$, let $d_1,\dots,d_\ell$ be the degrees of the cycle vertices $v_1,\dots,v_\ell$,  and let  $d_{\ell+1},\dots,d_n$ be the degrees of the remaining vertices $v_{\ell+1},\dots,v_n$, noting that $\sum_{i\in[n]}d_i=2n$. Every edge (occurrence of $H=P_2$) in $\mathcal C_k(G)$ corresponds to either a cycle vertex changing color or a non-cycle vertex changing color; we will count these types of edges separately. 

To count the occurrences of $P_2$ corresponding to cycle vertices, we note that of the $\pi_{C_{\ell}}^{(P_2)}(k)$ edges in an $\ell$-cycle's coloring graph, each vertex in $C_\ell$ contributes an equal proportion. In $\mathcal C_k(G)$, each of these edges due to cycle vertex $v_i$ appears $(k-2)^{d_i-2}(k-1)^{n-\ell-(d_i-2)}$ times because of the restraint this edge's colorings impose on the remaining $n-\ell$ non-cycle vertices. 

 Figure~\ref{fig:pseudocol} gives an example of how to count the occurrences of $P_2$ corresponding to a non-cycle vertex. In general, we first choose one of $\binom{k}{2}$ pairs of colors for non-cycle vertex $v_i$. Each of its neighbors has these two restraints, so there are $(k-2)^{d_i}$ neighborhood colorings. Then a breadth-first coloring of the rest of $v_i$'s tree gives $k-1$ options for each of those vertices, locking the cycle vertex present in that tree. With one cycle vertex color fixed, the rest of the cycle has $\pi_{C_\ell}(k)/k$ colorings, and then breadth-first coloring from the cycle vertices gives $k-1$ choices for each of the remaining vertices. 

\begin{figure}[ht]
 \begin{tikzpicture}[baseline=0cm, scale=0.5]
\node (v5) at (0,0) {};
\node (v4) at (2,0) {};
\node (v3) at (4,0) {};
\node (v2) at (4,2) {};
\node (v1) at (2,2) {};
\node (v6) at (6,2) {};
\node (v7) at (6,0) {};
\node (v8) at (8,2) {};
\node (v9) at (10,2) {};
\draw[style=thick] (v5)--(v3)--(v2)--(v1)--(v4);
\draw[style=thick] (v2)--(v9);
\draw[style=thick] (v7)--(v6);
{\footnotesize
\draw[radius=.4,fill=white](v5) circle;
\node[label=below:$k-1$] at (v5) {$v_5$};
\draw[radius=.4,fill=black!20](v8) circle;
\node[label=above:$\binom{k}{2}$] at (v8) {$v_8$};
\draw[radius=.4,fill=white](v6) circle;
\node[label=above:$k-2$] at (v6) {$v_6$};
\draw[radius=.4,fill=white](v9) circle;
\node[label=above:$k-2$] at (v9) {$v_9$};
\draw[radius=.4,fill=white](v4) circle node {$v_4$};
\node at (3,1) {$\frac{\pi_{C_\ell}(k)}{k}$};
\draw[radius=.4,fill=white](v1) circle node{$v_1$};
\draw[radius=.4,fill=white](v2) circle;
\node[label=above:$k-1$] at (v2) {$v_2$};
\draw[radius=.4,fill=white](v3) circle node {$v_3$};
\draw[radius=.4,fill=white] (v7) circle;
\node[label=below:$k-1$] at (v7) {$v_7$};}
\end{tikzpicture}
    \caption{Counting the $\binom{k}{2}\frac{\pi_{C_4}(k)}{k} (k-2)^2(k-1)^3$ occurrences of $P_2$ due to non-cycle vertex $v_8$}\label{fig:pseudocol}
\end{figure}
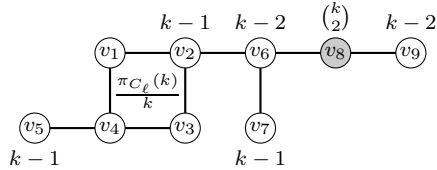

Counting each type of edge, we have the following chromatic pairs polynomial for pseudotree $G$:
\begin{equation}
\label{eq:ptree_formula}
\pi_G^{(P_2)}(k)=\frac{\pi_{C_\ell}^{(P_2)}(k)}{\ell}\sum_{i=1}^\ell (k-2)^{d_i-2}(k-1)^{n-\ell-(d_i-2)} + \binom{k}{2}\frac{\pi_{C_\ell}(k)}{k}\sum_{i=\ell+1}^n (k-2)^{d_i}(k-1)^{n-\ell-d_i}.
\end{equation}

\subsection{Counting Cliques in Coloring Graphs.} The chromatic pairs polynomial counts occurrences of $P_2$ in coloring graphs, which is the same as counting cliques of size 2. We can reason about any minimal $K_3$-generator $(U,C)$ in the same way that we reasoned that any minimal $P_2$-generator must be of a particular form. The coloring set $C$  for a $K_3$-generator must consist of three partial colorings. Furthermore, because $K_3$ has edges between each pair of its vertices, these colorings each must differ on a single vertex, so there can only be one color-changing vertex in $U$. Extending this argument, for any $t\ge 2$, any minimal $K_t$-generator must be of the form $(\set{v},\set{c_1,\dots,c_t})$, with $c_i(v)=i$ for $i\in[t]$. Then let $r_{(\set{v},\set{c_1,\dots,c_t})}=r_v^{(t)}$ denote the restraint on $G-v$ that forbids neighbors of $v$ from taking colors 1 through $t$, and we have the following generalization of Eq.~\eqref{eq:edge_formula} for counting chromatic $t$-tuples:
\begin{equation}
\label{eq:clique_formula}
\pi_G^{(K_t)}(k) = \binom{k}{t} \sum_{v\in V(G)} \rho_{r_{v}^{(t)}}(k).
\end{equation}
This formula is easily specialized for several classes of graphs we've just studied with $t=2$. Specifically, for null graphs, complete graphs, and trees we have
\begin{align}
\pi_{N_n}^{(K_t)}(k) &= n \binom{k}{t} k^{n-1} \ ,\label{eq:clique1} \\ 
\pi_{K_n}^{(K_t)}(k) &= n\binom{k}{t} (k-t)(k-t-1)\cdots (k-t-(n-2)) \ ,\label{eq:clique2} \\ 
\pi_{T}^{(K_t)}(k) &= \binom{k}{t} \sum_{v\in V} (k-t)^{\deg(v)} (k-1)^{n-\deg(v)-1} \ .\label{eq:clique3}
\end{align}
In Eq.~\eqref{eq:clique3}, we note that counting chromatic $t$-tuples for a tree has a similar dependence on degree sequence independent of $t$. We return to the implication of this in the following section. For now, rather than extending this analysis to counting cliques in coloring graphs for pseudotrees, we remark that  counting larger cliques may not provide much structural insight beyond counting $P_2$. Instead, we turn to exploring formulas for counting induced cycles in coloring graphs.

\subsection{Counting Cycles in Coloring Graphs} In this subsection we provide several additional examples of how to enumerate generators for different $H$ by focusing on small cycles. We will see that the structure of the restraints vary substantially depending on the length of the cycles we are searching for in the coloring graph. 
 
\subsubsection{Counting triangles.} 
Because a 3-cycle (triangle) is the same as  a 3-clique, Eqs.~\eqref{eq:clique_formula}-\eqref{eq:clique3} with $t=3$ show how to count triangles general graphs, null graphs, complete graphs, and trees, respectively. 

\subsubsection{Counting squares.} A more complex general expression can be used to calculate the number of induced 4-cycles (or squares) in a coloring graph. Any $C_4$-generator must involve two distinct vertices in $G$ changing color. This is because each vertex involved in a cycle in a coloring graph must be associated with at least two edges (so it can return to its original color), and if the same vertex were associated with all the cycle edges, then every pair of cycle vertices should be adjacent, creating a clique rather than an induced cycle.

If the two color-changing vertices in a $C_4$-generator are adjacent in $G$, they must take on two disjoint pairs of colors in $H$; if they are independent, their colors in the $C$ that induces $H$ may be disjoint or overlapping. For any $u,v\in V(G)$, we define the following restraints on $G-\set{u,v}$.

\begin{itemize}
\item $r_{uv}^{(4)}$ restrains neighbors of $u$ with $\set{1,2}$ and neighbors of $v$ with $\set{3,4}$; 
\item $r_{uv}^{(3)}$ restrains neighbors of $u$ with $\set{1,2}$ and neighbors of $v$ with $\set{1,3}$; and 
\item $r_{uv}^{(2)}$ restrains neighbors of $u$ or $v$ with $\set{1,2}$.
\end{itemize}
For $uv\in E(G)$, there are no minimal generators that use fewer than four colors. For $uv\not\in E(G)$, there is exactly one minimal generator that uses two colors. For $uv\not\in E(G)$, there are six minimal generators in $\mathcal{G}_3$ that use three colors. That is because any of the colors 1, 2, or 3 may be the one that colors both $u$ and $v$ in one of the vertices of the induced $H$, and either of the other two can be the other color assigned to $u$ in two other vertices of $H$. Similarly, for any $u,v\in V(G)$ there are six minimal generators in $\mathcal{G}_4$ that use four colors, because there are $\binom{4}{2}=6$ choices of two colors from $\set{1,2,3,4}$ that $u$ may take on in $H$. Hence, we can write the number of induced squares in a coloring graph as follows:

\begin{equation}\label{eq:C4}
\pi^{(C_4)}_G(k) = \sum_{\set{u,v}\in \binom{V(G)}{2}} 6\binom{k}{4} \rho_{r_{uv}^{(4)}}(k)  
+ \sum_{\substack{\set{u,v}\in \binom{V(G)}{2},\\uv\not\in E(G)}} 6 \binom{k}{3} \rho_{r_{uv}^{(3)}}(k) 
+ \sum_{\substack{\set{u,v}\in \binom{V(G)}{2},\\uv\not\in E(G)}} \binom{k}{2} \rho_{r_{uv}^{(2)}}(k) \ . 
\end{equation}
Note that this formula yields $\mathcal \pi_{P_3}^{(C_4)}(3)=3$, corresponding to the three squares that can be observed in Figure~\ref{fig:C3P3}. That is because the $\binom{k}{4}$ term goes to 0 with $k=3$ and because $\rho_{r_{uv}}^{(3)}(3)=0$ for outer (non-adjacent) vertices $u,v$, because the collective restraints on the center vertex would leave no remaining color for it. Finally we note that $\rho_{r_{uv}^{(2)}}(3)=1$ for outer vertices $u,v$ because their overlapping colors leave exactly one remaining color for the center vertex. That is why each of the squares in the figure have a fixed color for the center vertex. The $\binom{3}{2}=3$ choices of pairs of outer vertex colors yield the three squares.

\subsubsection{Counting five-cycles.} 

Coloring graphs never contain induced $5$-cycles, and so $\pi_{G}^{(C_5)}(k)=0$ \cite{BFHRS16}*{Corollary~12}. Subsection~\ref{sec:possible-cycles} summarizes this argument and explains why $C_5$ is the only cycle that cannot appear in coloring graphs.

Before continuing to the task of counting 6-cycles, we briefly note that 5-cycles are not the only $H$ for which the $H$-polynomial of any graph is identically zero. For example, it has been shown that any occurrence of $K_4-e$, the graph on four vertices with five edges, cannot be induced in a coloring graph. This is because any triangle in a coloring graph corresponds to a single vertex taking on three colors. Thus, any two triangles sharing an edge (see Figure~\ref{fig:Forbidden Theta}) must correspond to a single vertex taking on four colors without any other color changes, forcing the missing edge to appear in the coloring graph. In fact, there are infinitely many minimal forbidden subgraphs and therefore infinitely many $H$ such that $\pi_G^{(H)}(k)$ vanishes on all $G$ \cite{BFHRS16}*{Theorem~17} \cite{ABFR18}*{Theorem~13}.

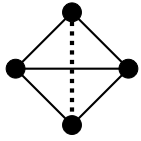
\begin{figure}[ht]
\begin{tikzpicture}[baseline=0cm, scale=.5]
\draw[thick] (0,1.5)--(1.5,0)--(0,-1.5)--(-1.5,0)--(0,1.5);
\draw[thick] (-1.5,0)--(1.5,0);
\draw[ultra thick, dotted] (0,1.5)--(0,-1.5);
\draw[radius=.25,fill=black] (0,1.5) circle;
\draw[radius=.25,fill=black] (0,-1.5) circle;
\draw[radius=.25,fill=black] (1.5,0) circle;
\draw[radius=.25,fill=black] (-1.5,0) circle;
\end{tikzpicture}
\caption{$K_4-e$ in a coloring graph must include the dotted line, so $\pi_G^{(K_4-e)}(k)=0$ for any $G$}
\label{fig:Forbidden Theta}   
\end{figure}

\subsubsection{Counting six-cycles.}\label{sec:6cycles} Generators for 6-cycles in coloring graphs fall into two  categories.
$C_6$ can appear as an induced subgraph of a coloring graph by two vertices alternately swapping between three colors each or by three vertices independently swapping between two colors each, the former of which may or may not occur as part of $K_3\square K_3$, and the latter of which always occurs as part of a 3-cube. For example, the thick edges in the graphs of Figure~\ref{fig:C6} depict an induced 6-cycle in the 3-coloring graph of $N_2$, which is $K_3\square K_3$, the 3-coloring graph of $P_2$, which itself is a 6-cycle, and an induced 6-cycle in the 2-coloring graph of $N_3$, which is a 3-cube.

\begin{figure}[ht]
\begin{tikzpicture}[baseline=0cm, scale=0.5]
\node (a) at (3,0) {a};
\node (b) at (6,0) {b};
\node (c) at (6,3) {c};
\node (d) at (0,3) {d};
\node (e) at (0,6) {e};
\node (f) at (3,6) {f};
\draw[style=thin] (3,6)--(6,6)--(6,3)--(0,3)--(0,0)--(3,0)--(3,6);
\draw[style=thin] (0,6)--(0,0);
\draw[style=thin] (0,6) to[out=225,in=135] (0,0);
\draw[style=ultra thick] (3,6) to[out=225,in=135] (3,0);
\draw[style=thin] (6,6) to[out=225,in=135] (6,0);
\draw[style=thin] (0,6) to[out=45, in=135] (6,6);
\draw[style=ultra thick] (0,3) to[out=45, in=135] (6,3);
\draw[style=thin] (0,0) to[out=45, in=135] (6,0);
\draw[style=ultra thick] (a)--(b)--(c);
\draw[style=ultra thick] (d)--(e)--(f);
\draw[fill=white] (2,-.5) rectangle (4,.5);
\draw[radius=.35,fill=white](2.5,0) circle node{2};
\draw[radius=.35,fill=white](3.5,0) circle node{1};
\draw[fill=white] (5,-.5) rectangle (7,.5);
\draw[radius=.35,fill=white](5.5,0) circle node{3};
\draw[radius=.35,fill=white](6.5,0) circle node{1};
\draw[fill=white] (-1,-.5) rectangle (1,.5);
\draw[radius=.35,fill=white](-.5,0) circle node{1};
\draw[radius=.35,fill=white](.5,0) circle node{1};
\draw[fill=white] (2,2.5) rectangle (4,3.5);
\draw[radius=.35,fill=white](2.5,3) circle node{2};
\draw[radius=.35,fill=white](3.5,3) circle node{2};
\draw[fill=white] (5,5.5) rectangle (7,6.5);
\draw[radius=.35,fill=white](5.5,6) circle node{3};
\draw[radius=.35,fill=white](6.5,6) circle node{3};
\draw[fill=white] (5,2.5) rectangle (7,3.5);
\draw[radius=.35,fill=white](5.5,3) circle node{3};
\draw[radius=.35,fill=white](6.5,3) circle node{2};
\draw[fill=white] (-1,2.5) rectangle (1,3.5);
\draw[radius=.35,fill=white](-.5,3) circle node{1};
\draw[radius=.35,fill=white](.5,3) circle node{2};
\draw[fill=white] (-1,5.5) rectangle (1,6.5);
\draw[radius=.35,fill=white](-.5,6) circle node{1};
\draw[radius=.35,fill=white](.5,6) circle node{3};
\draw[fill=white] (2,5.5) rectangle (4,6.5);
\draw[radius=.35,fill=white](2.5,6) circle node{2};
\draw[radius=.35,fill=white](3.5,6) circle node{3};
\end{tikzpicture}
\qquad\qquad
\begin{tikzpicture}[baseline=0cm, scale=0.5]
\node (a) at (3,0) {a};
\node (b) at (6,0) {b};
\node (c) at (6,3) {c};
\node (d) at (0,3) {d};
\node (e) at (0,6) {e};
\node (f) at (3,6) {f};
\draw[style=ultra thick] (a)--(b)--(c)--(d)--(e)--(f)--(a);
\draw[fill=white] (2,-.5) rectangle (4,.5);
\draw[style=thick](2.5,0)--(3.5,0);
\draw[radius=.35,fill=white](2.5,0) circle node{2};
\draw[radius=.35,fill=white](3.5,0) circle node{1};
\draw[fill=white] (5,-.5) rectangle (7,.5);
\draw[style=thick](5.5,0)--(6.5,0);
\draw[radius=.35,fill=white](5.5,0) circle node{3};
\draw[radius=.35,fill=white](6.5,0) circle node{1};
\draw[fill=white] (5,2.5) rectangle (7,3.5);
\draw[style=thick](5.5,3)--(6.5,3);
\draw[radius=.35,fill=white](5.5,3) circle node{3};
\draw[radius=.35,fill=white](6.5,3) circle node{2};
\draw[fill=white] (-1,2.5) rectangle (1,3.5);
\draw[style=thick](-.5,3)--(.5,3);
\draw[radius=.35,fill=white](-.5,3) circle node{1};
\draw[radius=.35,fill=white](.5,3) circle node{2};
\draw[fill=white] (-1,5.5) rectangle (1,6.5);
\draw[style=thick](-.5,6)--(.5,6);
\draw[radius=.35,fill=white](-.5,6) circle node{1};
\draw[radius=.35,fill=white](.5,6) circle node{3};
\draw[fill=white] (2,5.5) rectangle (4,6.5);
\draw[style=thick](2.5,6)--(3.5,6);
\draw[radius=.35,fill=white](2.5,6) circle node{2};
\draw[radius=.35,fill=white](3.5,6) circle node{3};    
\end{tikzpicture}
\qquad\qquad
\begin{tikzpicture} [baseline=0cm, scale=0.5]
\node (a) at (0,0) {a};
\node (b) at (4,0) {b};
\node (c) at (4,4) {c};
\node (d) at (0,4) {d};
\node (e) at (2,1.5) {e};
\node (f) at (6,1.5) {f};
\node (g) at (6,5.5) {g};
\node (h) at (2,5.5) {h};
\draw[style=thin] (a)--(b)--(c)--(d)--(a)--(e)--(f)--(g)--(h)--(e);
\draw[style=thin] (d)--(h)--(g)--(c)--(b)--(f)--(e)--(a);
\draw[style=ultra thick] (a)--(b)--(c)--(g)--(h)--(e)--(a);
\draw[fill=white] (-1.5,-.5) rectangle (1.5,.5);
\draw[radius=.35,fill=white](-1,0)circle node {1};
\draw[radius=.35,fill=white](0,0)circle node{1};
\draw[radius=.35,fill=white](1,0)circle node{1};
\draw[fill=white] (2.5,-.5) rectangle (5.5,.5);
\draw[radius=.35,fill=white](3,0)circle node {2};
\draw[radius=.35,fill=white](4,0)circle node{1};
\draw[radius=.35,fill=white](5,0)circle node{1};
\draw[fill=white] (-1.5,3.5) rectangle (1.5,4.5);
\draw[radius=.35,fill=white] (-1,4)circle node{1};
\draw[radius=.35,fill=white] (0,4)circle node{2};
\draw[radius=.35,fill=white] (1,4) circle node{1};
\draw[fill=white] (2.5,3.5) rectangle (5.5,4.5);
\draw[radius=.35,fill=white] (3,4) circle node{2};
\draw[radius=.35,fill=white] (4,4) circle node{2};
\draw[radius=.35,fill=white] (5,4) circle node{1};
\draw[fill=white] (.5,1) rectangle (3.5,2);
\draw[radius=.35,fill=white](1,1.5)circle node {1};
\draw[radius=.35,fill=white](2,1.5)circle node{1};
\draw[radius=.35,fill=white](3,1.5)circle node{2};
\draw[fill=white] (4.5,1) rectangle (7.5,2);
\draw[radius=.35,fill=white](5,1.5)circle node {2};
\draw[radius=.35,fill=white](6,1.5)circle node{1};
\draw[radius=.35,fill=white](7,1.5)circle node{2};
\draw[fill=white] (.5,5) rectangle (3.5,6);
\draw[radius=.35,fill=white] (1,5.5)circle node{1};
\draw[radius=.35,fill=white] (2,5.5)circle node{2};
\draw[radius=.35,fill=white] (3,5.5) circle node{2};
\draw[fill=white] (4.5,5) rectangle (7.5,6);
\draw[radius=.35,fill=white] (5,5.5) circle node{2};
\draw[radius=.35,fill=white] (6,5.5) circle node{2};
\draw[radius=.35,fill=white] (7,5.5) circle node{2};

\end{tikzpicture}
    \caption{Induced 6-cycles in $\mathcal C_3(N_2)$, $\mathcal C_3(P_2)$, and $\mathcal C_2(N_3)$}
    \label{fig:C6}
\end{figure}
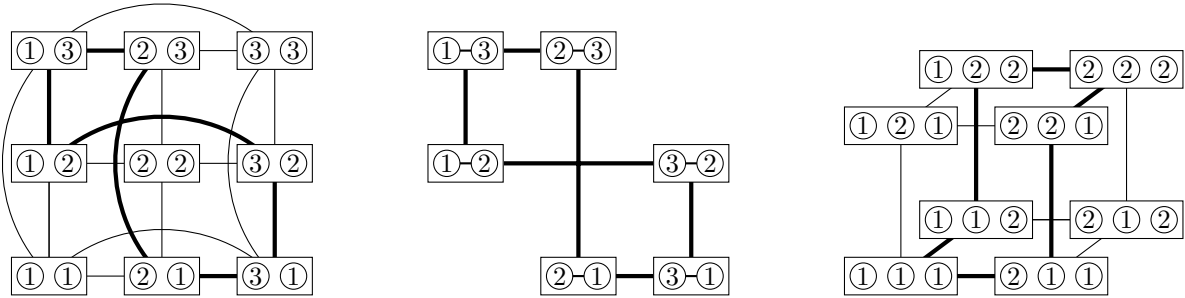

The two-vertex $C_6$-generators use at least three and at most six colors. The three-vertex $C_6$-generators use at least two colors (which is only possible if the three-vertices are pairwise non-adjacent) and up to six. Moreover, there are multiple three-vertex $C_6$-generators that use three or four colors. Writing down a closed-form formula for $\pi_{G}^{(C_6)}$ is substantially more complicated than our formula for $\pi_G^{(C_4)}$ albeit mechanically similar. We do this for completeness in Appendix~\ref{app:6cycles}, with an expression given in Eq.~\ref{eq:C6}.

\subsubsection{General approach for counting cycles}\label{sec:possible-cycles}
We conclude this section by describing the framework used for counting cycles in the previous examples (cf. \cite{ABFR18}). Each induced $\ell$-cycle on colorings has an associated partition $\lambda = (\lambda_1,\ldots, \lambda_m)\vdash \ell$ with $\lambda_1\geq\cdots \lambda_m>0$ where $m$ is the number of distinct vertices of the base graph that change color as the cycle is traversed, and $\lambda_i$ is the number of times the $i^{th}$ vertex changes color. 

A partition describing an induced $\ell$-cycle in this way must satisfy certain conditions. First, the partition must have  $\lambda_i>1$ for all $i$ since a cycle starts and ends with the same coloring.  For $\ell>3$, the same vertex cannot change color twice consecutively since this would imply existence of a chord in the cycle. Hence a partition coming from an induced $\ell$-cycle with $\ell>3$ must have $\lambda_i \leq \lfloor\frac{\ell}{2} \rfloor$ for all $i$. Any partition of $\ell$ not meeting these criteria can be disregarded when counting induced $\ell$-cycles on the colorings of a chosen base graph. Because no partition of $\ell=5$ satisfies these conditions, we can conclude $\pi_G^{(C_5)}(k)=0$ for all $G$. 

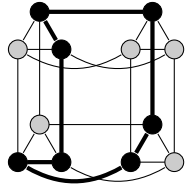
\begin{figure}[ht]
\begin{tikzpicture}[baseline=0cm, scale=0.5]
\node (a11) at (0,1) {};
\node (b11) at (.58,0) {};
\node (c11) at (-.58,0) {};
\node (a12) at (0,-2) {};
\node (b12) at (.58,-3) {};
\node (c12) at (-.58,-3) {};
\node (a21) at (3,1) {};
\node (b21) at (3.58,0) {};
\node (c21) at (2.42,0) {};
\node (a22) at (3,-2) {};
\node (b22) at (3.58,-3) {};
\node (c22) at (2.42,-3) {};
\draw[style=thin] (a11)--(b11)--(c11)--(a11);
\draw[style=thin] (a12)--(b12)--(c12)--(a12);
\draw[style=thin] (a21)--(b21)--(c21)--(a21);
\draw[style=thin] (a22)--(b22)--(c22)--(a22);
\draw[style=thin] (a11)--(a12)--(a22)--(a21)--(a11);
\draw[style=thin] (b11) to[bend right] (b21);
\draw[style=thin] (b22) to[bend left] (b12);
\draw[style=thin] (b11) -- (b12);
\draw[style=thin] (b21) -- (b22);
\draw[style=thin] (c11) to[bend right] (c21);
\draw[style=thin] (c22) to[bend left] (c12);
\draw[style=thin] (c11) -- (c12);
\draw[style=thin] (c21) -- (c22);
\draw[radius=.25,fill=black](a11) circle;
\draw[radius=.25,fill=black](b11) circle;
\draw[radius=.25,fill=black!20](c11) circle;
\draw[radius=.25,fill=black!20](a12) circle;
\draw[radius=.25,fill=black](b12) circle;
\draw[radius=.25,fill=black](c12) circle;
\draw[radius=.25,fill=black](a21) circle;
\draw[radius=.25,fill=black!20](b21) circle;
\draw[radius=.25,fill=black!20](c21) circle;
\draw[radius=.25,fill=black](a22) circle;
\draw[radius=.25,fill=black!20](b22) circle;
\draw[radius=.25,fill=black](c22) circle;
\draw[style=ultra thick] (a11)--(b11)--(b12)--(c12);
\draw[style=ultra thick] (c12) to[bend right] (c22);
\draw[style=ultra thick] (c22)--(a22)--(a21)--(a11);
\end{tikzpicture}
    \caption{An induced 7-cycle in $K_3\ \square\ Q_2$}
    \label{fig:C7}
\end{figure}

For all other $\ell>2$, induced $\ell$-cycles in coloring graphs exist \cite{BFHRS16}*{Corollary~12}. For example, Figure~\ref{fig:C7} depicts an induced 7-cycle that arises in $K_3\ \square\ Q_2$, an induced subgraph of $\mathcal C_3(N_3)$. Of course, establishing the existence of a cycle in a coloring graph does not count the number of cycles. As illustrated in the above examples, enumerating the types of minimal $C_\ell$-generators in a graph becomes increasingly unwieldy as $\ell$ gets large.

\section{Invariants}\label{sect:invariants}
One of our main motivations in defining and studying the polynomial $\pi_G^{(H)}$ is its use as a refined graph invariant.  We have already observed that the choice of $H=N_1$ recovers the chromatic polynomial, so the $H$-polynomial generalizes the chromatic polynomial.  While the chromatic polynomial is a known invariant for many graphs, there are many (common) graphs that share a chromatic polynomial.  This is the case for all trees on $n$ vertices.  Even the well-known Tutte polynomial, which also generalizes the chromatic polynomial, does not distinguish any trees with the same number of vertices.  

We begin this section with a proof that the chromatic pairs polynomial $\pi_G^{(P_2)}$ can indeed distinguish certain trees on $n$ vertices.  However, this refined invariant is not a complete invariant for trees as it cannot distinguish non-isomorphic trees with the same degree sequence.  The remainder of the section explores structural properties of the chromatic pairs polynomial that can be used to distinguish other graphs, as well as a brief exploration of other choices of $H$ for which $\pi_G^{(H)}$ that can be used as additional invariants.

\subsection{Chromatic Pairs Polynomials as Refined Invariants for Trees}
\label{sec:distinguish_trees}
 The chromatic polynomial for any tree $T$ on $n$ vertices is $\pi_T(k)=\pi_T^{(N_1)}(k)=k(k-1)^n$, which is only determined by the number of vertices in the tree. 
In contrast, our closed-form expression for the chromatic pairs polynomial for a tree can be used as a refined but not complete invariant for trees. Figure~\ref{fig:trees} gives three 6-vertex trees, which have matching chromatic polynomials; the first has a distinct degree sequence and a distinct chromatic pairs polynomial and the second two have matching degree sequences and matching chromatic pairs polynomials. Theorem~\ref{thm:treedeg} establishes that the chromatic pairs polynomial for trees precisely distinguish trees with different degree sequences.

\tikzset{
  P4/.pic = {
  \draw[style=thick](0,0)--(3,0);
\draw[radius=.25,fill=black](0,0)circle;
\draw[radius=.25,fill=black](1,0)circle;
\draw[radius=.25,fill=black](2,0)circle;
\draw[radius=.25,fill=black](3,0)circle;
  },
}

\tikzset{
  K13/.pic = {
  \draw[style=thick](0,0)--(2,0)--(1,0)--(1,1);
\draw[radius=.25,fill=black](0,0)circle;
\draw[radius=.25,fill=black](2,0)circle;
\draw[radius=.25,fill=black](1,0)circle;
\draw[radius=.25,fill=black](1,1)circle;
  },
}

\tikzset{
  P6/.pic = {
  \draw[style=thick](0,0)--(5,0);
\draw[radius=.25,fill=black](0,0)circle;
\draw[radius=.25,fill=black](1,0)circle;
\draw[radius=.25,fill=black](2,0)circle;
\draw[radius=.25,fill=black](3,0)circle;
\draw[radius=.25,fill=black](4,0)circle;
\draw[radius=.25,fill=black](5,0)circle;
  },
}

\tikzset{
  T6a/.pic = {
  \draw[style=thick](0,0)--(4,0)--(2,0)--(2,1);
\draw[radius=.25,fill=black](0,0)circle;
\draw[radius=.25,fill=black](1,0)circle;
\draw[radius=.25,fill=black](2,0)circle;
\draw[radius=.25,fill=black](2,1)circle;
\draw[radius=.25,fill=black](3,0)circle;
\draw[radius=.25,fill=black](4,0)circle;
  },
}

\tikzset{
  T6b/.pic = {
  \draw[style=thick](0,0)--(4,0)--(3,0)--(3,1);
\draw[radius=.25,fill=black](0,0)circle;
\draw[radius=.25,fill=black](1,0)circle;
\draw[radius=.25,fill=black](2,0)circle;
\draw[radius=.25,fill=black](3,1)circle;
\draw[radius=.25,fill=black](3,0)circle;
\draw[radius=.25,fill=black](4,0)circle;
  },
}
 \tikzset{
  G5b/.pic = {
  \draw[style=thick](2,0)--(1,-1)--(0,0)--(1,1)--(2,0)--(3,0);
\draw[radius=.25,fill=black](0,0)circle;
\draw[radius=.25,fill=black](1,1)circle;
\draw[radius=.25,fill=black](2,0)circle;
\draw[radius=.25,fill=black](1,-1)circle;
\draw[radius=.25,fill=black](3,0)circle;
  },
} 

 \tikzset{
  G5a/.pic = {
  \draw[style=thick](1,0)--(0,-1)--(0,1)--(1,0)--(3,0);
\draw[radius=.25,fill=black](1,0)circle;
\draw[radius=.25,fill=black](0,-1)circle;
\draw[radius=.25,fill=black](0,1)circle;
\draw[radius=.25,fill=black](2,0)circle;
\draw[radius=.25,fill=black](3,0)circle;
  },
}

\tikzset{
G6a/.pic = {
  \draw[style=thick](1,0)--(0,-.5)--(0,.5)--(1,0)--(2,0)--(3,.5)--(2,0)--(3,-.5);
\draw[radius=.25,fill=black](1,0)circle;
\draw[radius=.25,fill=black](0,-.5)circle;
\draw[radius=.25,fill=black](0,.5)circle;
\draw[radius=.25,fill=black](2,0)circle;
\draw[radius=.25,fill=black](3,.5)circle;
\draw[radius=.25,fill=black](3,-.5)circle;
  },
}

\tikzset{
G6b/.pic = {
  \draw[style=thick](0,0)--(4,0)--(2,0)--(1.5,1)--(1,0);
\draw[radius=.25,fill=black](0,0)circle;
\draw[radius=.25,fill=black](1,0)circle;
\draw[radius=.25,fill=black](2,0)circle;
\draw[radius=.25,fill=black](3,0)circle;
\draw[radius=.25,fill=black](4,0)circle;
\draw[radius=.25,fill=black](1.5,1)circle;
  },
}

\tikzset{
G7a/.pic = {
\node (a) at (0,0) {};
\node (b) at (3,0) {};
\node (c) at (3,3) {};
\node (d) at (0,3) {};
\node (e) at (1.5,0.5) {};
\node (f) at (1.5,1.5) {};
\node (g) at (1.5,2.5) {};
\draw[style=thick] (a)--(b)--(c)--(d)--(a)--(e)--(f)--(g);
\draw[style=thick] (a)--(g)--(c);
\draw[style=thick] (b)--(g)--(d);
\draw[style=thick] (a)--(f);
\draw[radius=.25,fill=black](a) circle;
\draw[radius=.25,fill=black](b) circle;
\draw[radius=.25,fill=black](c) circle;
\draw[radius=.25,fill=black](d) circle;
\draw[radius=.25,fill=black](e) circle;
\draw[radius=.25,fill=black](f) circle;
\draw[radius=.25,fill=black](g) circle;
}
}

\tikzset{
G7b/.pic = {
\node (a) at (0,0) {};
\node (b) at (3,0) {};
\node (c) at (3,3) {};
\node (d) at (0,3) {};
\node (e) at (1.5,0.5) {};
\node (f) at (2.5,1.5) {};
\node (g) at (1.5,1.5) {};
\draw[style=thick] (a)--(b)--(c)--(d)--(a)--(e)--(b)--(f)--(c);
\draw[style=thick] (a)--(g)--(c);
\draw[style=thick] (b)--(g)--(d);
\draw[radius=.25,fill=black](a) circle;
\draw[radius=.25,fill=black](b) circle;
\draw[radius=.25,fill=black](c) circle;
\draw[radius=.25,fill=black](d) circle;
\draw[radius=.25,fill=black](e) circle;
\draw[radius=.25,fill=black](f) circle;
\draw[radius=.25,fill=black](g) circle;
}
}

\begin{figure}[ht]
\begin{tabular}{lll}
\begin{tabular}{l}
     $T_1=$\end{tabular} &
     \begin{tabular}{l}\tikz \node [scale=0.5, inner sep=0] {
   \begin{tikzpicture}
     \pic at (0,0) {P6};
   \end{tikzpicture}
}; \end{tabular} & 
\begin{tabular}{l} $\pi_{T_1}^{(P_2)}(k)=3k^7-23k^6+72k^5-118k^4+107k^3-51k^2+10k$
\end{tabular}\\  && \\  
\begin{tabular}{l}$T_2=$\end{tabular} &
\begin{tabular}{l}\tikz \node [scale=.5, inner sep=0] {
\begin{tikzpicture}
    \pic at (0,0) {T6a};
\end{tikzpicture}
}; \end{tabular}& 
\begin{tabular}{l}$\pi_{T_2}^{(P_2)}(k)=3k^7-23k^6+\frac{145}{2}k^5-\frac{241}{2}k^4+\frac{223}{2}k^3-\frac{109}{2}k^2+11k$ \end{tabular}\\ & & \\ 
\begin{tabular}{l}$T_3=$\end{tabular} & 
\begin{tabular}{l}\tikz\node [scale=.5,inner sep=0] {
\begin{tikzpicture}
    \pic at (0,0) {T6b};
\end{tikzpicture}
};\end{tabular} & 
\begin{tabular}{l} $\pi_{T_3}^{(P_2)}(k)=3k^7-23k^6+\frac{145}{2}k^5-\frac{241}{2}k^4+\frac{223}{2}k^3-\frac{109}{2}k^2+11k$\end{tabular}
\end{tabular}
\caption{Trees distinguished and not distinguished by their chromatic pairs polynomials}\label{fig:trees}
\end{figure}

\begin{theorem}\label{thm:treedeg}
    For trees $T_1$ and $T_2$, $\pi_{T_1}^{(P_2)}=\pi_{T_2}^{(P_2)}$ if and only if $T_1$ and $T_2$ have the same degree sequence.
\end{theorem}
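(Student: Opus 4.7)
The forward direction will follow immediately from the closed form in Equation~\eqref{eq:tree_formula}. Collecting vertices by their common degree, we can rewrite
\[
\pi_T^{(P_2)}(k) = \binom{k}{2}\sum_{d=0}^{n-1} n_d\, (k-2)^d (k-1)^{n-1-d},
\]
where $n_d$ denotes the number of vertices of $T$ of degree $d$. The sequence $(n_d)$ carries exactly the same information as the multiset of degrees, so two trees sharing a degree sequence must share this polynomial.

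For the converse, the strategy is to reconstruct $n$ and the multiplicities $(n_d)$ from $\pi_T^{(P_2)}$. Each summand $p_d(k) := (k-2)^d(k-1)^{n-1-d}$ has degree $n-1$ with leading coefficient $1$, so $\sum_d n_d\, p_d(k)$ has leading coefficient $\sum_d n_d = n > 0$, and thus $\pi_T^{(P_2)}(k)$ has degree exactly $n+1$. Hence $n$ is read off directly from the polynomial. Since $k(k-1) \mid \pi_T^{(P_2)}(k)$, dividing by $\binom{k}{2}$ yields a genuine polynomial $f(k) := \sum_{d=0}^{n-1} n_d\, p_d(k)$ of degree $n-1$.

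The heart of the argument is to show that $\{p_0, p_1, \ldots, p_{n-1}\}$ is linearly independent, hence a basis for the $n$-dimensional space of polynomials of degree at most $n-1$. Given this, the coefficients $(n_d)$ are uniquely determined by $f$, and hence by $\pi_T^{(P_2)}$. To prove linear independence, suppose $\sum_{d=0}^{n-1} c_d\, p_d(k) \equiv 0$ as a polynomial in $k$. Dividing by $(k-1)^{n-1}$ and substituting $t = (k-2)/(k-1) = 1 - 1/(k-1)$, the identity becomes $\sum_d c_d\, t^d = 0$ for all $k \neq 1$. Since $t$ takes infinitely many values as $k$ varies over $\mathbb{R}$, the polynomial $\sum_d c_d\, t^d$ in $t$ has infinitely many roots and so vanishes identically, forcing every $c_d = 0$.

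The main (and only non-routine) obstacle is this linear-independence claim, which the substitution $t = (k-2)/(k-1)$ resolves cleanly; one could alternatively invert the triangular system obtained by evaluating $f$ and its derivatives at $k = 2$, where $p_d^{(m)}(2) = 0$ for $m < d$ and $p_d^{(d)}(2) = d!$. Either way, once $(n_d)$ is recovered, the degree sequence of $T$ is determined, completing the proof.
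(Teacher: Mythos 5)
Your proposal is correct, and it differs from the paper's argument only in how the key uniqueness step is carried out. The paper also starts from Eq.~\eqref{eq:tree_formula}, groups vertices by degree to write $\pi_T^{(P_2)}(k)=\binom{k}{2}\sum_d n_d (k-2)^d(k-1)^{n-d-1}$, and then recovers each $n_d$ \emph{explicitly and iteratively}: subtract the already-recovered terms $i<d$, divide by $(k-2)^{d-1}$, differentiate, and evaluate at $k=2$. That is exactly the ``triangular system at $k=2$'' you mention as an alternative (using $p_d^{(m)}(2)=0$ for $m<d$ and $p_d^{(d)}(2)=d!$), so the paper's route is a concrete reconstruction procedure rather than an abstract uniqueness statement. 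Your primary argument instead proves linear independence of $\{p_0,\dots,p_{n-1}\}$ via the substitution $t=(k-2)/(k-1)$, which turns the relation $\sum_d c_d p_d\equiv 0$ into a polynomial identity $\sum_d c_d t^d=0$ holding at infinitely many values of $t$; this is clean and avoids writing the recursive extraction formula, at the cost of not exhibiting how to compute the $n_d$ from the polynomial. You are also slightly more careful than the paper in noting first that $n$ itself is recovered from the degree (or leading coefficient $n/2$) of $\pi_T^{(P_2)}$, so that the two trees being compared live in the same span of $p_0,\dots,p_{n-1}$; the paper leaves this point implicit. Both proofs are valid, and yours is a legitimate, mildly more structural variant of the same idea.
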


\begin{proof}
    The reverse implication  is an immediate consequence of the closed form expression for the chromatic pairs polynomial for trees in Eq.~\eqref{eq:tree_formula} which depends entirely on the degree sequence of the tree. To see why the other direction holds, fix tree $T$ on $n$ vertices with max degree $\Delta$, and suppose $n_1,n_2,\dots,n_\Delta$ with $\sum_{d\in[\Delta]}n_d=n$ are the number of vertices in $T$ with degrees $1,2,\dots,\Delta$, respectively. Note that we can rewrite the chromatic pairs polynomial as follows:
    \[
    \pi_T^{(P_2)}(k)=\binom{k}{2}\sum_{d\in[\Delta]} n_d (k-2)^d(k-1)^{n-d-1}
    \]
    Then we can exactly recover the degree sequence $n_d$ for each $d=1,\dots,\Delta$ in turn as follows:
    \[
    n_d={\frac{d}{dk}\left(\frac{ \frac{\pi_T^{(P_2)}(k)}{\binom{k}{2}}
    -\sum_{i\in[d-1]} n_i(k-2)^i(k-1)^{n-i-1}}{(k-2)^{d-1}} \right)\bigg\rvert_{k=2} } \ .
    \]
    
    Therefore, if there is another tree that has this same chromatic pairs polynomial, it must have the same degree sequence.
\end{proof}

Although a tree's degree sequence uniquely determines its chromatic pairs polynomial and vice versa, this is not the case for graphs in general. Indeed, degree sequence does not even determine chromatic pairs polynomials for pseudotrees, trees with just one additional edge.  
Figure~\ref{fig:G6s} depicts two {pseudotrees} with the same degree sequence but different chromatic pairs polynomials. Additionally, a graph's chromatic pairs polynomial does not in general determine its degree sequence.  Figure~\ref{fig:G7s} provides two graphs with matching chromatic pairs polynomials but different degree sequences. 

\begin{figure}[ht]
\begin{tabular}{lll}
 \begin{tabular}{l}{$R_1=$}\end{tabular} &
 \begin{tabular}{l}\tikz \node [scale=0.5, inner sep=0] {
   \begin{tikzpicture}
     \pic at (0,0) {G6a};
   \end{tikzpicture}
}; \end{tabular}& 
\begin{tabular}{l}$\pi_{R_1}^{(P_2)}(k)=3k^7-27k^6+\frac{197}{2}k^5-187k^4+\frac{391}{2}k^3-107k^2+24k$\end{tabular} \\ & & \\
    \begin{tabular}{l}$R_2=$\end{tabular}&\begin{tabular}{l}\tikz \node [scale=0.5, inner sep=0] {
   \begin{tikzpicture}
     \pic at (0,0) {G6b};
   \end{tikzpicture}
};\end{tabular} &  \begin{tabular}{l}$\pi_{R_2}^{(P_2)}(k)=3k^7-27k^6+\frac{197}{2}k^5-\frac{373}{2}k^4+\frac{387}{2}k^3-\frac{209}{2}k^2+23k$ \end{tabular}
\end{tabular}
\caption{Graphs with the same degree sequence and different chromatic pairs polynomials}\label{fig:G6s}
\end{figure}

\begin{figure}[ht]
\begin{tabular}{lll}
 \begin{tabular}{l}$G_1=$\end{tabular}&\begin{tabular}{l}\tikz \node [scale=0.5, inner sep=0] {
   \begin{tikzpicture}
     \pic at (0,0) {G7a};
   \end{tikzpicture}
};\end{tabular} & \begin{tabular}{l}$\pi_{G_1}^{(P_2)}(k)=\frac{7}{2}k^8-\frac{115}{2}k^7+\frac{807}{2}k^6-\frac{3123}{2}k^5+3582k^4-4843k^3+3547k^2-1074k$ \end{tabular}\\ & & \\
    \begin{tabular}{l}$G_2=$\end{tabular}& \begin{tabular}{l}\tikz \node [scale=0.5, inner sep=0] {
   \begin{tikzpicture}
     \pic at (0,0) {G7b};
   \end{tikzpicture}
}; \end{tabular}& \begin{tabular}{l} $\pi_{G_2}^{(P_2)}(k)=\frac{7}{2}k^8-\frac{115}{2}k^7+\frac{807}{2}k^6-\frac{3123}{2}k^5+3582k^4-4843k^3+3547k^2-1074k$ \end{tabular}
\end{tabular}
\caption{Graphs with different degree sequences and matching chromatic pairs polynomials}\label{fig:G7s}
\end{figure}

We observe that the chromatic polynomial distinguishes fewer of our examples than the chromatic pairs polynomial does. 
The chromatic polynomial for a pseudotree $G$ on $n$ vertices with a unique cycle of length $\ell\le n$ is determined by $n$ and $\ell$ as $\pi_{G}(k)=\pi_{C_\ell}(k)\cdot (k-1)^{n-\ell}$, so the Figure~\ref{fig:G6s} graphs with different chromatic pairs polynomials share a chromatic polynomial $\pi_{R_1}(k)=\pi_{R_2}(k)=k^6-6k^5+14k^4-16k^3+9k^2-2k$. %
The graphs in Figure~\ref{fig:G7s} that are not distinguished by their chromatic pairs polynomial also share a chromatic polynomial, $\pi_{G_1}(k)=\pi_{G_2}(k)=k^7-12k^6+60k^5-159k^4+234k^3-180k^2+56k$. We conjecture that the chromatic pairs polynomials may be a more refined invariant than the chromatic polynomial. See Conjectures~\ref{conj:edge-implies-chromatic}~and~\ref{conj:deck-restraint-implies-chromatic} in Section~\ref{sect:closing-conj}.

\subsection{Coefficients of Chromatic Pairs Polynomials}
Motivated by the observations in Section \ref{sec:distinguish_trees}, we seek to establish basic structural properties of the chromatic pairs polynomial.  These properties can be used to rule out the possibility of two graphs having the same chromatic pairs polynomial.

Before stating and proving these properties, we note that deletion-contraction is useful for inductive proofs of structural properties of the chromatic polynomial and restrained chromatic polynomials (including proof of polynomiality).   
We have previously remarked that deletion-contraction does not hold in general for $\pi_G^{(H)}$ even though it does for $H=N_1$. For example, deletion-contraction for $G=P_2, H=P_2$ fails because the edge in $P_2$ is not present in either of the graphs for which $\rho_{G-v,r_v^{(2)}}$ is defined, and we observe that $\pi_{P_2}^{(P_2)}(k)=2\binom{k}{2}(k-2)\ne \pi_{P_2-e}^{(P_2)}(k)-\pi_{P_2/e}^{(P_2)}(k)=2\binom{k}{2}k-\binom{k}{2}$.  

However, we can still use deletion-contraction and other analysis of the individual $\rho_{r_v^{(2)}}$ to infer structural properties of the overall chromatic pairs polynomial, which we do in Theorems~\ref{thm:lowcoeffs}~and~\ref{thm:edge}. Both results consider coefficients of a chromatic pairs polynomial in the following standard form:
\begin{equation}\label{eq:edgestandard}
    \pi_G^{(P_2)}(k)=a_{n+1}k^{n+1}-a_n k^n + \dots + (-1)^{n} a_1k .
\end{equation}

We start with two easy observations. First, observe that the chromatic pairs polynomial for any graph $G$ is indeed a multiple of $k$ because of the $\binom{k}{2}$ term multiplied by restrained chromatic polynomials. Note that although any graph's chromatic polynomial is a multiple of $k$ \cite{Rea68}*{Theorem~9}, this is not true for restrained chromatic polynomials in general. For example, $N_1$ with vertex constraint $\set{1}$ has chromatic polynomial $k-1$. 

Second, applying deletion-contraction to the individual $\rho_{G-v,r_v^{(2)}}$ shows that, like chromatic polynomials, each $\rho_{G-v,r_v^{(2)}}$ is a polynomial of degree $\abs{V(G-v)}=n-1$ with coefficients with alternating signs \cite{Ere15}*{Theorem~4.1.2}. Because these are summed together and multiplied by $\binom{k}{2}$ in the chromatic pairs polynomial, the signs in the overall polynomial also alternate, and we have $a_i\ge 0$ for $i=1,\dots,n+1$ in Eq. \eqref{eq:edgestandard}.

Next we show that these polynomials are higher degree multiples of $k$ if their underlying graphs are disconnected. Theorem~\ref{thm:lowcoeffs} asserts that the lowest power of $k$ that appears with a nonzero coefficient is the number of connected components.

\begin{theorem}\label{thm:lowcoeffs}
Let $G$ be a graph with $t\ge 1$ connected components. Then the coefficients for $\pi_G^{(P_2)}$ in standard form (Eq.~\ref{eq:edgestandard}) are such that $a_i>0$ for $i\ge t$ and $a_{i}=0$ for $i<t$.
\end{theorem}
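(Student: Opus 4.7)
The plan is to reduce the theorem to Whitney's classical positivity result for chromatic polynomials by expressing $\pi_G^{(P_2)}$ as a sum of ordinary chromatic polynomials. For each $v \in V(G)$, let $G^v$ denote the graph built from $G-v$ by adding two new vertices $u_1, u_2$, each adjacent to every vertex of $N_G(v)$ and to each other. A proper $k$-coloring of $G^v$ in which $u_1, u_2$ take distinct colors $\alpha, \beta$ corresponds exactly to a proper $k$-coloring of $G-v$ whose values on $N_G(v)$ avoid $\{\alpha,\beta\}$; by relabeling colors, each of the $k(k-1)$ ordered choices of $(\alpha,\beta)$ contributes the same count $\rho_{G-v, r_v^{(2)}}(k)$. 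Hence $\pi_{G^v}(k) = k(k-1)\,\rho_{G-v, r_v^{(2)}}(k)$, which substituted into Eq.~\eqref{eq:edge_formula} (using $\binom{k}{2} = k(k-1)/2$) yields the identity
\[
\pi_G^{(P_2)}(k) = \frac{1}{2}\sum_{v\in V(G)} \pi_{G^v}(k).
\]

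Next, I would verify that $G^v$ has exactly $t$ connected components regardless of the choice of $v$. Suppose $v$ lies in the component $G_i$ of $G$. If $v$ has at least one neighbor in $G$, then every component of $G_i - v$ meets $N_G(v)$ (since $G_i$ is connected), and adding $u_1, u_2$ (both adjacent to all of $N_G(v)$ and to each other) fuses these fragments with $\{u_1,u_2\}$ into a single new component. If instead $G_i = \{v\}$, then $\{u_1, u_2\}$ forms its own fresh component. In either case, the other $t-1$ components $G_j$ for $j \neq i$ remain unchanged, giving $G^v$ exactly $(t-1) + 1 = t$ components.

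Finally, I would invoke the classical Whitney positivity theorem: for any graph $H$ on $n+1$ vertices with $t$ components, $\pi_H(k) = \sum_{i=t}^{n+1} (-1)^{(n+1)-i}\, b_i^H k^i$ with $b_i^H > 0$ for $t \leq i \leq n+1$ and $b_i^H = 0$ for $i < t$. Summing this expansion over $v \in V(G)$ and dividing by $2$ preserves the alternating sign pattern of Eq.~\eqref{eq:edgestandard} and yields $a_i = \tfrac{1}{2}\sum_v b_i^v$, which is strictly positive for $t \leq i \leq n+1$ (each $b_i^v > 0$ and $V(G)$ is nonempty) and vanishes for $i < t$. The heart of the argument is the auxiliary-graph identity; once this is in hand, the theorem reduces to a direct application of the well-known positivity of chromatic polynomial coefficients, with no further obstacle.
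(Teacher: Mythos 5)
Your argument is correct, and it takes a genuinely different route from the paper. The paper proves the theorem by first showing, via deletion--contraction induction, that each restrained chromatic polynomial $\rho_{G-v,r_v^{(2)}}$ for connected $G-v$ has alternating nonzero coefficients down to a nonzero constant term (choosing a non-cut vertex $v$ to guarantee connectivity), and then inducts on the number of components using Lemma~\ref{lemma:disjoint}. You instead eliminate the restrained polynomials altogether through the gadget $G^v$ obtained by replacing $v$ with an adjacent pair $u_1,u_2$ joined to $N_G(v)$: the color-relabeling bijection gives $\pi_{G^v}(k)=k(k-1)\,\rho_{G-v,r_v^{(2)}}(k)$ for every integer $k\ge 2$, hence as polynomials, so Eq.~\eqref{eq:edge_formula} becomes $\pi_G^{(P_2)}(k)=\tfrac12\sum_{v}\pi_{G^v}(k)$, and your component count for $G^v$ (including the isolated-vertex case) is right, so each summand is a chromatic polynomial of an $(n+1)$-vertex graph with exactly $t$ components. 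Since all summands have the same vertex count and component count, the classical coefficient theorem for chromatic polynomials (nonzero, sign-alternating coefficients exactly in degrees $t$ through $n+1$; Whitney/Read) transfers to the sum without cancellation, which is exactly the claim. What your route buys is a clean reduction to a classical fact, avoiding both the restrained-polynomial induction and the disjoint-union lemma; what the paper's route buys is structural information about the individual restrained summands (their alternating nonzero coefficients), which meshes with the surrounding discussion in Section~5. Two small points worth making explicit if you write this up: the passage from equality of counting functions for all $k\ge 2$ to equality of polynomials (both sides are eventually polynomial, so agreement at infinitely many integers suffices), and a citation for the classical statement that a graph on $N$ vertices with $t$ components has nonzero alternating coefficients precisely in degrees $t,\dots,N$.
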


The proof of Theorem~\ref{thm:lowcoeffs} is given in Appendix~\ref{app:coefficients-positivity}. It first reasons that a constant term must be present in at least one restrained chromatic polynomial that appears in the definition of the chromatic pairs polynomial.  Subsequently by induction on the number of connected components, the proof uses the following lemma to show how the order of the lowest order nonzero coefficient increases with the number of connected components. We present the lemma and its simple proof below:

\begin{lemma}
\label{lemma:disjoint}
    Let $G_1$ and $G_2$ be two graphs, and let $G_1+G_2$ denote their disjoint union. Then,
    \begin{equation*}
        \pi_{G_1+G_2}^{(P_2)}(k)=\pi_{G_1}^{(P_2)}(k)\cdot \pi_{G_2}(k)+\pi_{G_2}^{(P_2)}(k)\cdot \pi_{G_1}(k)\ .
    \end{equation*}
    
\end{lemma}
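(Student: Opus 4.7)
The plan is to prove the identity either by a direct bijection on the edges of $\mathcal{C}_k(G_1+G_2)$ or, equivalently, by applying the closed-form summation formula in Eq.~\eqref{eq:edge_formula}. Both approaches rest on the observation that a proper $k$-coloring of $G_1+G_2$ is exactly a pair $(c_1,c_2)$, where $c_i$ is a proper $k$-coloring of $G_i$, and an edge of $\mathcal{C}_k(G_1+G_2)$ corresponds to a pair of such colorings that differ on a single vertex $v$, which must lie in exactly one of $V(G_1)$ or $V(G_2)$.

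I would execute the combinatorial proof first. Partition the edges of $\mathcal{C}_k(G_1+G_2)$ according to whether the unique vertex on which the two endpoint colorings differ lies in $V(G_1)$ or in $V(G_2)$. In the first case, the restriction to $G_1$ changes along an edge of $\mathcal{C}_k(G_1)$ while the restriction to $G_2$ stays fixed at any proper $k$-coloring of $G_2$, so the number of such edges is $\pi_{G_1}^{(P_2)}(k)\cdot \pi_{G_2}(k)$. The symmetric count applies to edges whose differing vertex lies in $V(G_2)$, and summing gives the claimed identity.

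Alternatively, I would give a direct algebraic derivation via Eq.~\eqref{eq:edge_formula}. Write
\[
\pi_{G_1+G_2}^{(P_2)}(k) \;=\; \binom{k}{2}\sum_{v\in V(G_1+G_2)} \rho_{(G_1+G_2)-v,\,r_v^{(2)}}(k).
\]
For each $v\in V(G_1)$, the neighborhood of $v$ is contained in $V(G_1)$, so the restraint $r_v^{(2)}$ only forbids colors on vertices of $G_1-v$. Since $G_2$ is a disjoint component of $(G_1+G_2)-v$ on which no restraint is imposed, the restrained polynomial factors as $\rho_{G_1-v,\,r_v^{(2)}}(k)\cdot \pi_{G_2}(k)$, and the analogous factorization holds for $v\in V(G_2)$. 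Splitting the sum over $V(G_1+G_2)=V(G_1)\sqcup V(G_2)$ and invoking Eq.~\eqref{eq:edge_formula} again for each $G_i$ yields the claimed formula.

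There is no real obstacle here; the argument is essentially a disjoint-union bookkeeping, and the only thing to verify carefully is the factorization of the restrained chromatic polynomial across disjoint components, which follows immediately because a restraint is purely vertex-local and imposes no interaction between components.
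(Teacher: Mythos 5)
Your first (combinatorial) argument is exactly the paper's proof: partition the edges of $\mathcal{C}_k(G_1+G_2)$ by whether the single differing vertex lies in $V(G_1)$ or $V(G_2)$, and count each class as $\pi_{G_1}^{(P_2)}(k)\cdot\pi_{G_2}(k)$ or $\pi_{G_2}^{(P_2)}(k)\cdot\pi_{G_1}(k)$, so the proposal is correct and takes essentially the same approach. The alternative derivation via Eq.~\eqref{eq:edge_formula} and the factorization of restrained chromatic polynomials over disjoint components is also sound, but it is only a reformulation of the same bookkeeping.
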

\begin{proof} Every edge in a coloring graph for $G_1+G_2$ must correspond to either a vertex of $G_1$ or a vertex of $G_2$ changing color. Each of the $\pi_{G_1}^{(P_2)}(k)$ edges in $\mathcal C_k(G_1)$ appears $\pi_{G_2}(k)$ times in $\mathcal C_k(G_1+G_2)$, once per (independent) coloring of the vertices of $G_2$, and each of the $\pi_{G_2}^{(P_2)}(k)$ edges in $\mathcal C_k(G_2)$ appears $\pi_{G_1}(k)$ times in $\mathcal C_k(G_1+G_2)$.
\end{proof}

Having established how the number of connected components of a graph determines its chromatic pairs polynomial's low-degree coefficients, we now derive explicit expressions for the coefficients of the first few high-degree terms. 
It is well-known that for a graph $G$ with $\abs{V(G)}=n$ and $\abs{E(G)}=m$, the chromatic polynomial of  $G$ is monic of degree $n$ with $-m$ as the coefficient of $k^{n-1}$ \cite{Rea68}*{Theorems~7,8,11}. Whitney \cite{Whi32} showed that the remaining coefficients have a combinatorial interpretation in terms of broken circuits. The modern proof of Whitney's Theorem is in Sagan's textbook \cite{Sag20}*{Theorem~3.8.5}.

 Analogously, by \cite{Ere19}*{Theorems~4.2.1~and~4.2.2}, every restrained chromatic polynomial is a monic polynomial of degree $n$, whose coefficients alternate in sign.  Moreover, the magnitude of the second coefficient of a restrained chromatic polynomial is the number of edges in the underlying graph plus the total number of restraints. For each restrained polynomial $\rho_v^{(2)}$ in Eq.~\eqref{eq:edge_formula}, the underlying graph $G-v$ has $n-1$ vertices, $m-\deg(v)$ edges, and $2\deg(v)$ restraints. Hence each $\rho_v^{(2)}$ can be written in the form
    \begin{equation}
        \rho_v^{(2)}(k) = k^{n-1} -  (m-\deg(v)+2\deg(v))k^{n-2}+b_{n-3}^{(v)}k^{n-3}-\cdots+(-1)^{n-1} b_0^{(v)}\ ,
    \end{equation}
    with coefficients $b_j^{(v)}\ge 0$ for $j=0,\dots,n-3$. 
    Substituting this general form of $\rho_v^{(2)}$  into Eq.~\eqref{eq:edge_formula}, we obtain
    \begin{align}
    \label{eq:form_first2}
        \pi_G^{(P_2)}(k) & =   \binom{k}{2}\sum_{v\in V(G)}\left( k^{n-1}-(m+\deg(v))k^{n-2}+b_{n-3}^{(v)}k^{n-3}-\cdots+(-1)^{n+1} b_0^{(v)} \right) \nonumber\\ &= \frac{n}{2} k^{n+1}-\frac{n+nm+2m}{2} k^{n}+\frac{1}{2}\left( (n+2)m+\sum_{v\in V(G)}b_{n-3}^{(v)}\right)k^{n-1} 
        \\& \qquad - \frac{1}{2}\left(\sum_{v\in V(G)}\left(b_{n-3}^{(v)}+b_{n-4}^{(v)}\right)\right)k^{n-2}+\cdots+\frac{(-1)^{n}}{2}\left(\sum_{v\in V(G)}b_0^{(v)}\right)k \nonumber \ .
    \end{align}
This general formula for our chromatic pairs polynomial immediately gives expressions for the coefficients $a_{n+1}$ and $a_n$ in the chromatic pairs polynomial, which proves the first two results in the following theorem. 
 Additional results from \cite{Ere19} about restrained chromatic polynomials allow us to derive the next coefficient in the chromatic pairs polynomial as well. The proof of the $a_{n-1}$ coefficient in Theorem~\ref{thm:edge} below can be found in Appendix~\ref{app:third-coefficient}.
\begin{theorem}
\label{thm:edge}
    Let $G$ be a graph with $\abs{V(G)}=n$, $\abs{E(G)}=m$, degree sequence $\set{d_i}_{i\in[n]}$, and $\ell$ triangles. Then the coefficients for $\pi_{G}^{(P_2)}$ in standard form (Eq.~\ref{eq:edgestandard}) are such that
    \begin{align*}
        a_{n+1} &=  \frac{n}{2}, \quad a_{n} =  \frac{n+nm+2m}{2}, \quad \text{and} \quad a_{n-1} = \frac 1 2 \left( \frac{nm(m+1)}{2}+2m^2-m-(n+3)\ell+\frac 1 2 \sum_{i\in[n]}d_i^2 \right)\ .
    \end{align*}
\end{theorem}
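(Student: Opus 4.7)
The first two coefficients read off directly from Eq.~\eqref{eq:form_first2}, which already gives $a_{n+1} = n/2$ and $a_n = (n+nm+2m)/2$ as the coefficients of $k^{n+1}$ and $k^n$. The substance lies in the coefficient $a_{n-1}$. From Eq.~\eqref{eq:form_first2}, this coefficient equals $\tfrac{1}{2}\bigl((n+2)m + \sum_{v\in V(G)} b_{n-3}^{(v)}\bigr)$, so it suffices to evaluate the sum $\sum_{v\in V(G)} b_{n-3}^{(v)}$, where $b_{n-3}^{(v)}$ denotes the third coefficient of the restrained chromatic polynomial $\rho_{G-v,r_v^{(2)}}(k)$.

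My plan is to invoke the known explicit formula for the third coefficient of a restrained chromatic polynomial from \cite{Ere19}, which extends the Whitney/broken-circuit interpretation of the chromatic polynomial's third coefficient to the restrained setting. For the specific polynomial $\rho_{G-v,r_v^{(2)}}$, the relevant combinatorial data are: the underlying graph $G-v$ has $n-1$ vertices, $m-\deg(v)$ edges, and $\ell - t(v)$ triangles, where $t(v)$ denotes the number of triangles of $G$ through $v$; and the restraint imposes $2\deg(v)$ total restraint entries, since each of $v$'s neighbors in $G-v$ has the pair $\{1,2\}$ forbidden. Erey's formula then expresses $b_{n-3}^{(v)}$ in terms of $\binom{m+\deg(v)}{2}$ (from pairs among the $m-\deg(v)$ edges and $2\deg(v)$ restraints) minus correction terms counting configurations that yield dependent color constraints: triangles in $G-v$, pairs of edges of $G-v$ sharing a vertex that is also restrained, pairs of restraint entries assigned to adjacent vertices of $G-v$, and pairs of restraints of the same color placed on adjacent vertices.

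I will then sum these contributions over $v\in V(G)$ using the standard identities $\sum_v \deg(v) = 2m$, $\sum_v \binom{\deg(v)}{2} = \tfrac{1}{2}\sum_v \deg(v)^2 - m$ (counting paths of length $2$ in $G$), $\sum_v t(v) = 3\ell$, and $\sum_v (m-\deg(v)) = (n-2)m$. After regrouping the pieces by their dependence on $m$, $\ell$, and $\sum_i d_i^2$, I expect the $\binom{m+\deg(v)}{2}$ contributions to assemble into the $\tfrac{1}{2}\cdot\tfrac{nm(m+1)}{2} + 2m^2 - m$ block, the triangle corrections to produce the $-(n+3)\ell$ term (the $-3\ell$ coming from removal of triangles of $G-v$ summed over $v$ and the remaining $-n\ell$ from triangle--restraint overlaps), and the neighbor-pair corrections together with the path-of-length-$2$ counts to combine into the $\tfrac{1}{2}\cdot\tfrac{1}{2}\sum_i d_i^2$ term. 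Subtracting the $(n+2)m$ that was already extracted in Eq.~\eqref{eq:form_first2} and dividing by $2$ then gives the claimed expression.

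\textbf{Main obstacle.} The principal difficulty is the careful bookkeeping in Erey's third-coefficient formula: there are several kinds of configurations that each correct the naive $\binom{m+R}{2}$ count, and when $v$ varies over $V(G)$ the local quantities $\deg(v)$, $t(v)$, and the number of edges incident to the $\deg(v)$ restrained neighbors must be translated into global invariants without double-counting. In particular, keeping straight which triangle--restraint and restraint--restraint interactions have already been incorporated into the $\binom{m+\deg(v)}{2}$ term versus those that must be subtracted separately is the step where an off-by-one or miscounted orbit would derail the final match with $\tfrac{nm(m+1)}{2} + 2m^2 - m - (n+3)\ell + \tfrac{1}{2}\sum_i d_i^2$.
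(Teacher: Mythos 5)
Your plan follows the paper's proof essentially verbatim: the first two coefficients are read off Eq.~\eqref{eq:form_first2}, and $a_{n-1}$ is obtained by applying Erey's formula for the third coefficient of a restrained chromatic polynomial to each $\rho_{G-v,r_v^{(2)}}$ (where $G-v$ has $n-1$ vertices, $m-\deg(v)$ edges, and $2\deg(v)$ restraint entries on $N(v)$) and then summing over $v$ using $\sum_v \deg(v)=2m$, $\sum_v \operatorname{Tri}_v(G)=3\ell$, and $\sum_v \operatorname{Tri}(G-v)=(n-3)\ell$. One small correction to your anticipated bookkeeping: the $-(n+3)\ell$ term assembles as $-(n-3)\ell$ from the triangles surviving in $G-v$ plus $-6\ell$ from the edge--restraint intersection term (each edge of $G-v$ joining two restrained neighbors of $v$ corresponds to a triangle of $G$ through $v$ and contributes $\lvert\{1,2\}\cap\{1,2\}\rvert=2$, giving $\sum_v 2\operatorname{Tri}_v(G)=6\ell$), rather than as $-3\ell-n\ell$.
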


This theorem shows us that the number of vertices and edges in a graph determine the first two coefficients of the chromatic pairs polynomial, and these quantities along with the number of triangles and the sum of the squares of the vertex degrees determine the third coefficient of the chromatic pairs polynomial. We can compare these results to the first three coefficients in the chromatic polynomial, which are 1, $-m$, and $\binom{m}{2}-\ell$, respectively \cite{Whi32}. This implies that graphs that share a chromatic polynomial must share the first two coefficients of the chromatic pairs polynomial. Additionally, graphs that share a chromatic pairs polynomial must agree on at least the first three coefficients of their chromatic polynomial.  This reinforces the idea that the chromatic pairs polynomial contains at least as much information as the chromatic polynomial, motivating Conjecture~\ref{conj:edge-implies-chromatic}, as $a_{n-1}$ includes extra information involving the degree sequence.

Theorem~\ref{thm:edge} connects to the graphs presented in the previous subsection as follows. 
Trees are triangle-free, and so the dependence of the first three coefficients on $n$, $m$, and degree sequence is consistent with Theorem~\ref{thm:treedeg}.  
Moreover, one can readily check that these graph properties are the same for the two pseudotrees in Figure \ref{fig:G6s}, which explains why the two chromatic pairs polynomials have the same $k^5$ coefficient even though subsequent coefficients diverge.  Interestingly, this also explains why the $k^6$ coefficients agree for the graphs in Figure~\ref{fig:G7s}. Although these graphs' degree sequences $\set{d_i}_{i\in[n]}$ are different, the sum of degrees squared $\sum d_i^2$ are the same, as well as number of vertices, edges, and triangles.

Finally, we note that knowledge only of the leading coefficients of chromatic pairs polynomials and chromatic polynomials ($n/2$ and 1, respectively) implies that chromatic pairs polynomials in general cannot be chromatic polynomials of different graphs. The only exceptions to this are graphs on two vertices, which have monic chromatic pairs polynomials that happen to coincide with chromatic polynomials of different graphs as follows:
\begin{align*}
\pi_{P_2}^{(P_2)}(k)&=\pi_{K_3}(k)=k^3-3k^2+2k \\ 
\pi_{N_2}^{(P_2)}(k)&=\pi_{N_1+P_2}(k)=k^3-k^2.
\end{align*}
The fact that leading coefficients of chromatic pairs polynomials differ in general from those of chromatic polynomials implies that the chromatic pairs polynomial is a new polynomial in the sense that there is no general mapping from graph $G$ to $G'$ such that the chromatic pairs polynomial for $G$ is the chromatic polynomial for $G'$.  

\subsection{Graphs with Unique Chromatic Pairs Polynomials}
The observation that the chromatic pairs polynomial for graph $G$ recovers the number of vertices and edges immediately (from the first two coefficients) shows that chromatic pairs polynomials are unique for null and complete graphs. 

Next, we discuss how chromatic pairs polynomial can help us detect whether a given graph is a tree. It is well-known  that a graph $G$ is a tree on $n$ vertices if and only if the chromatic polynomial $\pi_G(k)$ is equal to $k(k-1)^{n-1}$ \cite{Rea68}*{Theorem~13}. The next result is an analogue for the chromatic pairs polynomial. 

\begin{theorem} A graph $G$ on $n$ vertices with degree sequence $d_1, d_2, ..., d_n$ is a tree if and only if 
\begin{equation}\label{eq:tree}
\pi_{G}^{(P_2)}(k) = \binom{k}{2} \sum_{v\in G} (k-2)^{d_i} (k-1)^{n-d_i-1} \ .
\end{equation}
\end{theorem}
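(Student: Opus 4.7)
The forward direction is immediate from the closed-form expression for tree chromatic pairs polynomials in Eq.~\eqref{eq:tree_formula}. For the converse, my plan is to assume the displayed equality and deduce both that $|E(G)|=n-1$ and that $G$ is connected, from which treehood follows.

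The edge count will come from matching the $k^n$ coefficient on both sides. Expanding $(k-2)^{d_v}(k-1)^{n-d_v-1} = k^{n-1}-(d_v+n-1)k^{n-2}+O(k^{n-3})$, summing over $v$ (using $\sum_v d_v = 2m$), and multiplying by $\binom{k}{2}$, the $k^n$ coefficient on the right works out to $-(n^2+2m)/2$. Theorem~\ref{thm:edge} supplies the corresponding coefficient on the left as $-(n+nm+2m)/2$. Equating these simplifies to $n^2=n+nm$, that is, $m=n-1$.

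For connectedness, I would invoke Theorem~\ref{thm:lowcoeffs}, which identifies the number of connected components of $G$ with the smallest index $i$ for which the coefficient $a_i$ in the standard-form expansion of $\pi_G^{(P_2)}$ is nonzero. Hence it suffices to verify that the right-hand side has a nonzero $k^1$ coefficient. Setting $f(k) = \sum_v (k-2)^{d_v}(k-1)^{n-d_v-1}$, the $k^1$ coefficient of $\tfrac{k(k-1)}{2}f(k)$ equals $-\tfrac{1}{2}f(0)$, and a direct evaluation gives $f(0) = \sum_v (-2)^{d_v}(-1)^{n-d_v-1} = (-1)^{n-1}\sum_v 2^{d_v}$, which is plainly nonzero. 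Thus $t=1$, and combined with $|E(G)|=n-1$ this forces $G$ to be a tree.

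The plan is largely bookkeeping, and I do not expect any genuine obstacle. The only conceptually meaningful step is checking that the $k^1$ coefficient does not vanish, which boils down to observing that the signs $(-2)^{d_v}(-1)^{n-d_v-1}$ collapse to a single overall sign $(-1)^{n-1}$ independent of $v$, so no cancellation can occur in the sum. Everything else reduces to coefficient matching empowered by Theorems~\ref{thm:edge}~and~\ref{thm:lowcoeffs}.
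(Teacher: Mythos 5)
Your proposal is correct and follows essentially the same route as the paper: the forward direction via the tree formula, and the converse by using Theorem~\ref{thm:edge} to read off $m=n-1$ from the $k^n$ coefficient and Theorem~\ref{thm:lowcoeffs} together with a nonzero linear coefficient to get connectedness. The only difference is that you carry out the coefficient computations (including the sign-collapse showing $f(0)=(-1)^{n-1}\sum_v 2^{d_v}\ne 0$) explicitly, which the paper leaves implicit.
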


\begin{proof} If $G$ is a tree, then the chromatic pairs polynomial takes the desired form by Theorem~\ref{thm:treedeg}. Conversely, suppose graph $G$ satisfies Eq.~\ref{eq:tree}. Then its second coefficient reveals $\abs{V(G)}-1$ edges, and it has a nonzero linear term, implying $G$ is connected by Theorem~\ref{thm:lowcoeffs}. Such a graph must be a tree. 
\end{proof}

Theorem~\ref{thm:treedeg} also shows that chromatic pairs polynomials for paths and stars are unique, because they are the only trees with $2$ and $n-1$ leaves, respectively. This is despite paths and stars on $n$ vertices sharing a chromatic polynomial (with all other trees on $n$ vertices). 

Like null and complete graphs, cycles are known to be chromatically unique, meaning that for $n\ge 3$, $C_n$ is the only graph with its chromatic polynomial \cite{CW78}. Note that the uniqueness of the chromatic pairs polynomial for cycle graphs would follow immediately from Conjecture~\ref{conj:edge-implies-chromatic}. However, we will give an unconditional proof of this result.  

\begin{theorem}\label{thm:cycleuniqueness}
A graph $G$ is an $n$-cycle if and only if $\pi_{G}^{(P_2)}(k)=\pi_{C_n}^{(P_2)}(k)$.
\end{theorem}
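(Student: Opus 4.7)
The forward direction is immediate since $\pi_G^{(P_2)}$ is a graph isomorphism invariant. For the converse, suppose $\pi_G^{(P_2)}(k) = \pi_{C_n}^{(P_2)}(k)$. Reading off the two leading coefficients via Theorem~\ref{thm:edge}, we obtain $|V(G)|=n$ and $|E(G)|=n$. Starting from the closed form $\pi_{C_n}^{(P_2)}(k) = \tfrac{n}{2}(k-2)^2(k-1)^{n-1} + (-1)^n n(k-1)(k-2)$, a direct expansion yields $(-1)^n n(2n-3)$ as the coefficient of $k$, which is nonzero for $n\ge 3$; hence Theorem~\ref{thm:lowcoeffs} forces $G$ to have exactly one connected component. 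Thus $G$ is a pseudotree on $n$ vertices with a unique cycle of some length $\ell \leq n$.

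The case $n=3$ is handled immediately, since any connected graph on three vertices with three edges is $C_3$. For $n\ge 4$, the plan is to use the third coefficient $a_{n-1}$ from Theorem~\ref{thm:edge}: given $n$ and $m=n$, it determines the value of $\tfrac{1}{2}\sum_i d_i^2 - (n+3)\,t_\Delta(G)$, where $t_\Delta(G)$ is the number of triangles in $G$. For $C_n$ this value equals $2n$. If $\ell \geq 4$, then $G$ is triangle-free, so $\sum_i d_i^2 = 4n$; together with $\sum_i d_i = 2n$, the Cauchy--Schwarz inequality forces $d_i=2$ for every $i$, and a connected $2$-regular graph on $n$ vertices is $C_n$.

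The hard part is ruling out the remaining case $\ell=3$ with $n\ge 4$, which I would handle by a local analysis of $\pi_G^{(P_2)}$ near $k=2$. Specializing Eq.~\eqref{eq:ptree_formula} with $\ell=3$, and using $\sigma_2(k) = (k-2)(k-3)$ together with $\pi_{C_3}(k)/k = (k-1)(k-2)$, produces the factorization
\begin{equation*}
\pi_G^{(P_2)}(k) = \tfrac{1}{2}\,k(k-1)(k-2)\bigl[(k-3) S_1(k) + (k-1) S_2(k)\bigr],
\end{equation*}
where $S_1(k) = \sum_{i=1}^{3} (k-2)^{d_i-2}(k-1)^{n-1-d_i}$ sums over the three cycle vertices and $S_2(k) = \sum_{i>3}(k-2)^{d_i}(k-1)^{n-3-d_i}$ sums over the remaining vertices. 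Dividing by $(k-2)$ and substituting $k=2$, with the convention $0^0 = 1$, yields $-S_1(2) + S_2(2)$; since every non-cycle vertex has positive degree, $S_2(2) = 0$, while $S_1(2)$ equals the number of cycle vertices of degree exactly $2$, a value in $\{0,1,2,3\}$. The analogous quotient for $\pi_{C_n}^{(P_2)}$ evaluated at $k=2$ gives $(-1)^n n$. Matching the two forces $S_1(2) = (-1)^{n+1} n$, which is impossible for $n \ge 4$ (either the right-hand side is negative, or at least $5$), producing the desired contradiction and completing the proof.
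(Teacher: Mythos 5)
Your proof is correct, and it overlaps with the paper's argument in its first half while diverging in the second. Like the paper, you use Theorem~\ref{thm:lowcoeffs} plus the top two coefficients from Theorem~\ref{thm:edge} to force $G$ to be a connected graph with $n$ vertices and $n$ edges, i.e.\ a pseudotree; your explicit computation of the linear coefficient $(-1)^n n(2n-3)$ of $\pi_{C_n}^{(P_2)}$ is a fine (if slightly redundant) way to invoke that theorem. From there the paper proceeds uniformly: it differentiates the pseudotree formula \eqref{eq:ptree_formula} at $k=2$ for arbitrary cycle length $\ell$ (the computation in its appendix), obtaining $\ell_2+n_1$ or $-\ell_2$ versus $\pm n$ for the cycle, and compares. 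You instead split on $\ell$: for $\ell\ge 4$ the pseudotree is triangle-free, so the third coefficient $a_{n-1}$ of Theorem~\ref{thm:edge} pins down $\sum d_i^2=4n$, and Cauchy--Schwarz with $\sum d_i=2n$ forces $2$-regularity --- an ingredient the paper does not use for this theorem and which avoids the pseudotree formula entirely in that case; for $\ell=3$ you factor \eqref{eq:ptree_formula} as $\tfrac12 k(k-1)(k-2)\bigl[(k-3)S_1(k)+(k-1)S_2(k)\bigr]$ and evaluate the quotient by $(k-2)$ at $k=2$, getting $-\ell_2$ against $(-1)^n n$. Since both polynomials vanish at $k=2$, your quotient evaluation is exactly the paper's derivative at $k=2$, so this case is essentially the paper's argument specialized to $\ell=3$, where the formula factors cleanly. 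The trade-off: your route replaces the somewhat lengthy general-$\ell$ derivative computation with a coefficient identity plus a one-line convexity argument, at the cost of a case split; the paper's uniform computation also yields the explicit derivative formula it reuses to distinguish the pseudotrees in its Figure~\ref{fig:G6s}. All the individual steps you use check out (the closed form $\pi_{C_n}^{(P_2)}(k)=\tfrac n2(k-2)^2(k-1)^{n-1}+(-1)^n n(k-1)(k-2)$, the value $2n$ of the triangle-adjusted degree-square sum for $C_n$ with $n\ge4$, $S_2(2)=0$, and $S_1(2)=\ell_2\le 3$), so the contradiction $\ell_2=(-1)^{n+1}n$ for $n\ge 4$ is sound.
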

\begin{proof}[Proof summary.]
Observe that any candidate graph with a matching chromatic pairs polynomial must be connected by Theorem~\ref{thm:lowcoeffs}, and because its chromatic pairs polynomial must match on the first two coefficients, it must have $n$ vertices and $n$ edges. In other words, it suffices to show that $C_n$ is the only pseudotree on $n$ vertices with its chromatic pairs polynomial. As an aside, this reasoning also allows us to quickly confirm chromatic uniqueness of cycle graphs, because any pseudotree on $n$ vertices with a cycle of length $\ell$ has chromatic polynomial $\pi_{G}(k)=\pi_{C_\ell}(k)(k-1)^{n-\ell}$, which only equals $\pi_{C_n}(k)$ when $\ell=n$. In fact, a graph $G$ on $n\geq 3$ vertices is a pseudotree with a cycle length $\ell$ with $3\leq \ell \leq n$ if and only if $\pi_G(k)=\pi_{C_{\ell}}(k) (k-1)^{n-\ell}$ \cite{Laz95}*{Theorem~2}.

We consider the formula for $\pi_G^{(P_2)}(k)$ for pseudotrees in Eq.~\eqref{eq:ptree_formula} and evaluate its derivative at $k=2$. Appendix~\ref{app:deriv} expands on this calculation, which results in: 
\[
\frac{d}{dk}\pi_G^{(P_2)}(k) \Big|_{k=2} = \begin{cases} \ell_2+n_1 & \ell\text{ even} \\ -\ell_2 & \ell\text{ odd}\end{cases},
\]
where $\ell_0$ is the number of cycle vertices of degree 2 and $n_1$ is the number of degree one vertices. Note that  this formula distinguishes the pseudotrees in Figure~\ref{fig:G6s}, the first of which has two degree-2 cycle vertices and the latter of which has only one. By contrast, a simple cycle has $$\frac{d}{dk}\pi_{C_n}^{(P_2)}(k)\mid_{k=2}=\begin{cases} n & n\text{ even} \\ -n & n\text{ odd}\end{cases}\ .$$ 
If a pseudotree is not a cycle graph, then $\ell_2<\ell_2+n_1<n$, establishing uniqueness of $\pi_{C_n}^{(P_2)}$. 
\end{proof}

\subsection{Hypercube Polynomials as Invariants}\label{sec:hyper}
Despite the uniqueness of certain chromatic pairs polynomials, other chromatic pairs polynomials are not able to distinguish non-isomorphic graphs, such as $T_2$ and $T_3$ in Figure~\ref{fig:trees} and $G_1$ and $G_2$ in Figure~\ref{fig:G7s}. We next investigate whether a different choice of $H$ could distinguish more pairs of non-isomorphic graphs. Like other reconfiguration systems, a graph recoloring problem can be described by a \emph{cube complex}~\cite{GP07}, the 1-skeleton of which is the coloring graph $\mathcal{C}_k(G)$. A cube complex is a (non-disjoint) union of  hypercubes, so generalizing the chromatic pairs polynomial to count higher-dimensional hypercubes may provide additional information about the coloring graph.

\begin{figure}[ht]
\begin{tikzpicture}[baseline=0cm, scale=0.5]
\draw[style=thick] (5,0)--(3,-2)--(5,-4)--(7,-2)--(5,0);

\draw[fill=white] (3.5,-.5) rectangle (6.5,.5);
\node[anchor=south] at (5,.5) {${c_{00}}$};
\draw[style=thick](4,0)--(6,0);
\draw[radius=.35,fill=white](4,0)circle node{1};
\draw[radius=.35,fill=black!20](5,0)circle;
\draw[radius=.35,fill=white](6,0)circle node{1};
\node[anchor=east] at (3.75,0) {$v_0\rightarrow$};
\node[anchor=west] at (6.25,0) {$\leftarrow v_1$};

\draw[fill=white] (1.5,-2.5) rectangle (4.5,-1.5);
\node[anchor=east] at (1.5,-2) {${c_{01}}$};
\draw[style=thick](2, -2)--(4,-2);
\draw[radius=.35,fill=white](2,-2)circle node{2};
\draw[radius=.35,fill=black!20](3,-2)circle;
\draw[radius=.35,fill=white](4,-2)circle node{1};

\draw[fill=white] (5.5,-2.5) rectangle (8.5,-1.5);
\node[anchor=west] at (8.5,-2) {$c_{10}$};
\draw[style=thick](8, -2)--(6,-2);
\draw[radius=.35,fill=white](6,-2)circle node{1};
\draw[radius=.35,fill=black!20](7,-2)circle;
\draw[radius=.35,fill=white](8,-2)circle node{2}; 

\draw[fill=white] (3.5,-4.5) rectangle (6.5,-3.5);
\node[anchor=north] at (5,-4.5) {${c_{11}}$};
\draw[style=thick](4,-4)--(6,-4);
\draw[radius=.35,fill=white](4,-4)circle node{2};
\draw[radius=.35,fill=black!20](5,-4)circle;
\draw[radius=.35,fill=white](6,-4)circle node{2}; 
\end{tikzpicture} 

\caption{A 2-cube induced in $\mathcal C_3(P_3)$ by generator $(\set{v_0,v_1},\set{c_{00},c_{01},c_{10},c_{11}})$ with colors $k_{0,0}=1, k_{0,1}=2, k_{1,0}=1,k_{1,1}=2$, using the notation of Lemma~\ref{lem:cubeconstruction}}\label{fig:cube-generator}
\end{figure}
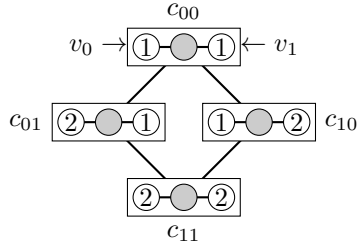

We have already described how to count occurrences of $C_4=Q_2$ in our section on counting small cycles. Now we focus more narrowly on the presence or absence of higher-dimensional hypercubes in coloring graphs. Lemma~\ref{lem:cubeconstruction} establishes necessary conditions for minimal $Q_s$-generators for any $s\ge 0$. At a high level, the lemma states that an induced $s$-cube in a coloring graph can only arise from exactly $s$ vertices independently swapping between two colors each. Figure~\ref{fig:cube-generator} illustrates this phenomenon  by annotating an induced $2$-cube in $\mathcal{C}_3(P_3)$ that arises from the choice of the left and right vertices of the base graph for $U=\set{v_0,v_1}$, respectively. The four partial colorings in $C=\set{c_{00},c_{01},c_{10},c_{11}}$ are enumerated in binary to make it easier to read off the relevant bits $\lfloor{i/2^j}\rfloor\pmod{2}$ determining the color of vertex $j$. For example,  $c_{01}$ assigns color $k_{0,1}=2$ to vertex $v_0$ because $\lfloor (01)_2/2^0\rfloor \pmod{2}= 1$, and it assigns color $k_{1,0}=1$ to vertex $v_1$ because $\lfloor (01)_2/2^1\rfloor \pmod{2} = 0$.

\begin{lemma}\label{lem:cubeconstruction}
    For graph $G$ and $s\ge 0$, every minimal $Q_s$-generator $(U,C)$ is of the form $U=\set{v_0,\dots,v_{s-1}}$ and $C=\set{c_{0},\dots,c_{2^s-1}}$ for some positive integers $k_{0,0}<k_{0,1},\dots,k_{s-1,0}<k_{s-1,1}$ such that $c_i(v_j)=k_{j,\lfloor\frac{i}{2^j}\rfloor\pmod{2}}$ for all $i=0,\dots,2^s-1$, $j=0,\dots,s-1$.
\end{lemma}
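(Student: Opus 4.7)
The plan is to show that the combinatorial rigidity of the hypercube $Q_s$, together with the fact that each coloring-graph edge recolors a single $G$-vertex, forces the generator to decompose according to the $s$ coordinate directions of $Q_s$. Identify $V(Q_s)$ with $\{0,1,\ldots,2^s-1\}$ via binary expansion, so that $E(Q_s)$ partitions into perfect matchings $E_0,\ldots,E_{s-1}$ where $e\in E_j$ iff the endpoints of $e$ differ in bit $j$. My goal is to show (i) every edge in $E_j$ corresponds to the recoloring of a single $G$-vertex $v_j$, swapping between a single pair of colors $k_{j,0}<k_{j,1}$; (ii) the vertices $v_j$ are pairwise distinct; and (iii) the indexing of colorings in $C$ matches the stated binary formula. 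The cases $s=0$ and $s=1$ are immediate ($s=0$ gives the empty generator, and $s=1$ reduces to the $P_2$-generator classification established earlier), so I focus on $s\ge 2$.

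The central step is a $4$-face analysis. For any two bits $i\neq j$, the corresponding $Q_2$-subcube of $Q_s$ induces a $4$-cycle $c_a,c_b,c_c,c_d$ in $\mathcal{C}_k(G)$, with $ab,cd\in E_i$ and $bc,da\in E_j$. Let $v$ and $w$ denote the $G$-vertices recolored on $ab$ and $bc$ respectively; the bipartiteness of $Q_s$ and the absence of triangles force $v\neq w$. A short case check on the remaining two edges rules out the ``swapped'' identification (in which $cd$ recolors $w$ and $da$ recolors $v$), because that identification would force $c_d=c_b$, contradicting distinctness of the $Q_s$-vertices. So $cd$ must also recolor $v$ and $da$ must also recolor $w$, and tracing the colors around the face shows that both $E_i$-edges swap $v$ between the same ordered pair of values, with the analogous statement for $w$ and the $E_j$-edges. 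Any two edges of $E_j$ are linked by a chain of such $4$-faces (obtained by flipping one other bit at a time along a path in $Q_s$), so this identification propagates globally: every edge in $E_j$ recolors a single $G$-vertex $v_j$ between a single pair $k_{j,0}<k_{j,1}$.

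Finally, I rule out coincidences: if $v_i=v_j$ with $i\neq j$, then at any corner of a $4$-face through bits $i$ and $j$, the two incident edges recolor the same $G$-vertex, forcing the two neighboring colorings to differ from each other at only that one vertex; they would therefore be adjacent in $\mathcal{C}_k(G)$, producing a chord in the supposed induced $4$-cycle, a contradiction. Hence $v_0,\ldots,v_{s-1}$ are distinct. Minimality of the generator then pins down $U=\{v_0,\ldots,v_{s-1}\}$, since no other $G$-vertex changes color across $C$. For the binary formula, pick any coloring to serve as $c_0$ and order each pair of colors so that $c_0(v_j)=k_{j,0}$; walking the edges of $Q_s$ from $0$ to $i$ and toggling the relevant $v_j$ at each step then yields $c_i(v_j)=k_{j,\lfloor i/2^j\rfloor \bmod 2}$, as required. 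The main technical obstacle is the $4$-face case check; the rest of the argument is essentially bookkeeping along the coordinate structure of $Q_s$.
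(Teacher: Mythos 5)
Your proof is correct, but it takes a genuinely different route from the paper's. The paper argues by induction on $s$: it decomposes $Q_s$ into two induced copies of $Q_{s-1}$, invokes the monotonicity-in-$H$ observation to extract a minimal $Q_{s-1}$-generator from the given one, and then shows that all matching edges between the two halves recolor a single new vertex $v_{s-1}$ (a third color on an old vertex would create a triangle, and adjacency of matched neighbors forces the same new vertex across all matching edges). You instead give a direct, non-inductive argument: partition $E(Q_s)$ into the $s$ parallel classes, prove a $4$-face rigidity statement (consecutive edges of an induced square must recolor distinct vertices, since otherwise the two diagonal colorings would coincide or be adjacent, creating a chord; opposite edges must recolor the same vertex between the same pair of colors, since the ``swapped'' identification collapses two corners of the square), and then propagate along chains of faces using connectivity of the subcube of complementary coordinates. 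The underlying local mechanism is the same in both proofs --- inducedness/triangle-freeness plus the closing of squares --- but your version avoids the induction and the appeal to generator monotonicity, and it makes explicit two points the paper leaves implicit: that all parallel edges use one common color pair, and that the color-changing vertex sets agree across the whole cube. The cost is somewhat more coordinate bookkeeping. One cosmetic repair: at the end you set $k_{j,0}=c_0(v_j)$, which can conflict with the requirement $k_{j,0}<k_{j,1}$; since you have shown every coloring in $C$ assigns $v_j$ one of two fixed colors and $|C|=2^s$, all $2^s$ combinations occur, so you may take $k_{j,0}<k_{j,1}$ to be the sorted pair and index $C$ directly by the binary formula (equivalently, compose your labeling with the coordinate-flip automorphisms of $Q_s$).
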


\begin{proof}
    This is trivially true for $H=N_1=Q_0$, where the chromatic $H$-polynomial is the chromatic polynomial, generated by $(\emptyset,c_\emptyset)$, where $c_{\emptyset}$ is the unique coloring with empty domain. For any $s>0$, suppose the theorem is true for induced $H=Q_{s-1}$. Consider $H=Q_s$ and note that $H$ can be partitioned into two disjoint induced copies of $Q_{s-1}$. By induction and our observation about monotonicity in $H$, any minimal $Q_s$-generator contains a minimal $Q_{s-1}$-generator $(\set{v_0,\dots,v_{s-2}},\set{c_0^{(0)},\dots,c_{2^{s-1}-1}^{(0)}})$. Fix a partition of the vertices of $Q_s$ such that each part induces $Q_{s-1}$. By the structure of $Q_s$, each vertex in the first part is associated with some $c_{i}^{(0)}$ that has an edge to a vertex in the other part that we will call $c_{i}^{(1)}$. This edge cannot be due to any vertex in $\set{v_0,\dots,v_{s-2}}$ taking on a third color, because that would create a triangle that is not present in $Q_s$, so let $v_{s-1}$ be the new color-changing vertex. For any $i'=0,\dots,2^{s-1}-1$ that differs from $i$ by a power of 2 (so it is neighboring in its $Q_{s-1}$ subgraph) the edge from $c_{i'}^{(0)}$ to $c_{i'}^{(1)}$ must correspond to the same $v_{s-1}$ changing color so that $c_{i}^{(1)}$ and $c_{i'}^{(1)}$ can be neighbors. {The order of the two colors for $v_{s-1}$ is irrelevant, so label them $k_{{s-1},0}<k_{s-1,1}$.} This forces the $Q_s$-generator to be of the desired form, completing our proof.
 \end{proof}

This lemma allows us to establish necessary and sufficient conditions for presence of hypercubes in a coloring graph in Theorem~\ref{thm:hypercubes}. Figure~\ref{fig:4cubes} illustrates conditions that establish presence of an induced 5-cube in the 5-coloring graph of one of the graphs from Figure~\ref{fig:G7s}. In particular, the left annotation indicates the $s=5$ non-gray vertices that constitute $U$, and their labeling indicates a 2-coloring $c$. Given $U$ and $c$, define the following restraint on $G-U$: 
\begin{equation}\label{eq:hypercube-r}
r_{(U,c)}(v) = \set{j:2c(u)-1=j\text{ or }2c(u)=j\text{ for some }u\in N(v)\cap U}\text{ for any $v\in V(G)-U$ .}
\end{equation}
Then the labels on the right confirm that $\rho_{G-U,r_{(U,c)}}(k)>0$ for $k=5$, so indeed $\pi_G^{(Q_5)}(5)>0$.

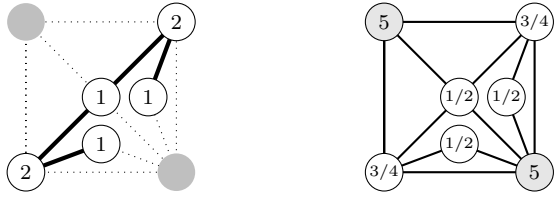
\begin{figure}[ht]
\begin{tikzpicture}[scale=.5, baseline=0cm]
\node (a) at (0,0) {};
\node (b) at (4,0) {};
\node (c) at (4,4) {};
\node (d) at (0,4) {};
\node (e) at (2,.75) {};
\node (f) at (3.25,2) {};
\node (g) at (2,2) {};
\draw[style=dotted] (b)--(d)--(a)--(b)--(c)--(d)--(a);
\draw[style=dotted] (e)--(b)--(f);
\draw[style=ultra thick] (e)--(a)--(c)--(f);
\footnotesize{\draw[radius=.5,fill=white](a) circle node{2};
\draw[radius=.5,draw=none,fill=black!25](b) circle node{};
\draw[radius=.5,fill=white](c) circle node{2};
\draw[radius=.5,draw=none,fill=black!25](d) circle node{};
\draw[radius=.5,fill=white](e) circle node{1};
\draw[radius=.5,fill=white](f) circle node{1};
\draw[radius=.5,fill=white](g) circle node{1};}
\end{tikzpicture}
\qquad\qquad\qquad
\begin{tikzpicture}[scale=.5, baseline=0cm]
\node (a) at (0,0) {};
\node (b) at (4,0) {};
\node (c) at (4,4) {};
\node (d) at (0,4) {};
\node (e) at (2,.75) {};
\node (f) at (3.25,2) {};
\node (g) at (2,2) {};
\draw[style=thick] (b)--(d)--(a)--(b)--(c)--(d)--(a);
\draw[style=thick] (b)--(e)--(a)--(c)--(f)--(b);
\footnotesize{
\draw[radius=.5,fill=black!10](b) circle node{5};
\draw[radius=.5,fill=black!10](d) circle node{5};}
\tiny{\draw[radius=.5,fill=white](a) circle node{3/4};
\draw[radius=.5,fill=white](c) circle node{3/4};
\draw[radius=.5,fill=white](e) circle node{1/2};
\draw[radius=.5,fill=white](f) circle node{1/2};
\draw[radius=.5,fill=white](g) circle node{1/2};}
\end{tikzpicture}
    \caption{A 2-coloring of a 5-vertex induced subgraph of graph $G$ and the corresponding $2^5$ colorings of $G$ that induce a copy of $Q_5$ in $\mathcal C_5(G)$}\label{fig:4cubes}
\end{figure}

\begin{theorem}\label{thm:hypercubes}
    
    For graph $G$, $\pi_G^{(Q_s)}(k)>0$ if and only if there exists some $U$ and $c$ such that $U\subseteq V$ with $\abs{U}=s$, $c:U\to[\lfloor k/2\rfloor]$ is a proper coloring of $G[U]$, and $\rho_{G-U,r_{(U,c)}}(k)>0$ for $r_{(U,c)}$ as in Eq.~\eqref{eq:hypercube-r}. 
\end{theorem}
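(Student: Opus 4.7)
I will prove both directions of the biconditional by explicit construction, using Lemma~\ref{lem:cubeconstruction} as the structural backbone for the forward direction. For the $(\Leftarrow)$ direction, suppose $(U,c)$ satisfy the hypotheses and label $U=\set{v_0,\ldots,v_{s-1}}$. Fix any proper coloring $c'$ of $G-U$ with $c'(v)\notin r_{(U,c)}(v)$ for all $v\in V(G)-U$, as guaranteed by $\rho_{G-U,r_{(U,c)}}(k)>0$. For each $i\in\set{0,1,\ldots,2^s-1}$, define a coloring $c_i$ of $G$ by setting $c_i(v_j)=2c(v_j)-1+(\lfloor i/2^j\rfloor\bmod 2)$ on $v_j\in U$ and $c_i(v)=c'(v)$ on $V(G)-U$. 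I will verify properness of each $c_i$ on the three types of edges: within $U$ (where $\set{2c(u)-1,2c(u)}\cap\set{2c(u')-1,2c(u')}=\emptyset$ follows from $c(u)\ne c(u')$ for adjacent $u,u'$ in $G[U]$), within $V(G)-U$ (from properness of $c'$), and across the partition (from $r_{(U,c)}$ forbidding exactly the pair $\set{2c(u)-1,2c(u)}$ at neighbors of each $u\in U$). A short combinatorial check then shows that the induced subgraph of $\mathcal{C}_k(G)$ on $\set{c_i}_{i=0}^{2^s-1}$ is $Q_s$, since $c_i$ and $c_{i'}$ differ on exactly one vertex of $G$ precisely when $i$ and $i'$ differ in exactly one bit.

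For the $(\Rightarrow)$ direction, assume $\pi_G^{(Q_s)}(k)>0$, so there is an induced $Q_s$ in $\mathcal{C}_k(G)$, and apply Lemma~\ref{lem:cubeconstruction} to obtain a minimal generator $(U,\set{c_0,\ldots,c_{2^s-1}})$ with $U=\set{v_0,\ldots,v_{s-1}}$ and color pairs $P_j=\set{k_{j,0},k_{j,1}}$. Properness of each $c_i$ on edges of $G[U]$ forces $P_j\cap P_{j'}=\emptyset$ whenever $v_jv_{j'}\in E(G[U])$, since otherwise some $c_i$ would give adjacent vertices the same color. From this disjointness I plan to build the required $c\colon U\to[\lfloor k/2\rfloor]$ by choosing a permutation $\sigma$ of $[k]$ that maps the distinct pairs appearing among $P_0,\ldots,P_{s-1}$ to consecutive pairs $\set{1,2},\set{3,4},\ldots$ and setting $c(v_j)=t$ when $\sigma(P_j)=\set{2t-1,2t}$. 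Since adjacent vertices in $G[U]$ have disjoint $P_j$'s that remain disjoint under $\sigma$, they receive different indices $t$, making $c$ a proper coloring of $G[U]$. Finally, transporting the restriction of any $c_i$ to $V(G)-U$ under $\sigma$ produces a proper coloring of $G-U$ respecting $r_{(U,c)}$, witnessing $\rho_{G-U,r_{(U,c)}}(k)>0$.

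The main obstacle is the re-indexing step in the forward direction: it must simultaneously convert the pairs $P_0,\ldots,P_{s-1}$ into consecutive-pair form while preserving a proper coloring interpretation of $G[U]$ and keeping some extension to $V(G)-U$ feasible. Non-adjacent vertices in $U$ may share one or both colors across their pairs, so the collection of distinct pairs need not be pairwise disjoint; I expect to handle this by assigning the same $c$-value to any two vertices of $U$ whose pairs coincide, and then using a permutation that only needs to align the pairwise disjoint pairs of adjacent-in-$G[U]$ vertices to consecutive positions in $[\lfloor k/2\rfloor]$. Tracking how this permutation acts on the compatible colorings of $V(G)-U$, and confirming that none of the restrained color choices is destroyed, is the bookkeeping I anticipate being the most delicate.
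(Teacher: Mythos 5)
Your reverse direction is correct and is essentially the paper's own argument: the standard pairs $\set{2c(u)-1,2c(u)}$ assigned to $U$ are disjoint across edges of $G[U]$, and any coloring of $G-U$ counted by $\rho_{G-U,r_{(U,c)}}(k)$ extends all $2^s$ selections to proper colorings whose induced subgraph in $\mathcal C_k(G)$ is $Q_s$.

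The forward direction, however, breaks at exactly the step you flag as delicate, and it cannot be repaired along the lines you describe. What Lemma~\ref{lem:cubeconstruction} hands you is an assignment of a pair $P_j\subseteq[k]$ to each $v_j\in U$ with adjacent vertices of $G[U]$ receiving \emph{disjoint} pairs; that is a homomorphism of $G[U]$ into the Kneser graph of $2$-subsets of $[k]$, which is strictly weaker than what you must produce, namely a proper coloring of $G[U]$ with $\lfloor k/2\rfloor$ colors. No permutation $\sigma$ of $[k]$ can send two distinct but overlapping pairs to disjoint consecutive pairs, and your fallback of merging vertices only applies when their pairs coincide exactly. Concretely, take $G=C_5$ with vertices $v_0,\dots,v_4$ in cyclic order, $s=5$, $U=V(G)$, $k=5$, and pairs $P_0=\set{1,2}$, $P_1=\set{3,4}$, $P_2=\set{5,1}$, $P_3=\set{2,3}$, $P_4=\set{4,5}$: consecutive pairs are disjoint, so all $2^5$ selections are proper $5$-colorings of $C_5$ and they induce a copy of $Q_5$ in $\mathcal C_5(C_5)$, whence $\pi_{C_5}^{(Q_5)}(5)>0$. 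Yet $\lfloor k/2\rfloor=2$ and $C_5$ admits no proper $2$-coloring, so no admissible $(U,c)$ exists: the five pair classes are pairwise distinct, intersect in a cyclic pattern, and cannot be aligned with or merged into at most two standard pairs while respecting adjacency. (Even in instances where a suitable $c$ does exist, note a second hazard in your plan: separating two overlapping pairs into distinct standard pairs enlarges the restraint at a common neighbor in $G-U$ from three colors to four, so the transported coloring of $G-U$ need not respect $r_{(U,c)}$.) This difficulty is not yours alone: the paper's proof asserts $\kappa\le\lfloor k/2\rfloor$ for the minimum palette size $\kappa$ of a coloring in $C$ "because $C$ uses at least $2\kappa$ colors," which fails here ($\kappa=3$ while $C$ uses only $5$ colors), and indeed the "only if" implication of Theorem~\ref{thm:hypercubes} as stated fails on this instance. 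The genuine obstruction is the inequality $\chi(G[U])\le\lfloor k/2\rfloor$, which simply does not follow from the existence of an induced $Q_s$; a correct characterization would have to allow an assignment of (possibly overlapping) color pairs to $U$ that is disjoint across edges of $G[U]$, with the restraint on $G-U$ defined from those pairs, rather than a proper $\lfloor k/2\rfloor$-coloring as in Eq.~\eqref{eq:hypercube-r}.
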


\begin{proof}
For the reverse direction, consider a budget of $k$ colors, and suppose $c$ uses at most half of them to color $U\subseteq G(V)$. Then the two colorings $2c-1$ and $2c$ each properly color  $U$, and so does any $c'$ with $c'(v)=2c(v)-1$ or $c'(v)=2c(v)$ for each $v$.  If $k$ colors suffice to color the rest of the graph respecting the restraint $r_{(U,c)}$, then we can fix any one such coloring of the rest of the graph and induce $Q_{\abs{U}}$ by independently coloring the vertices of $U$ with any combination of the colors allowed by $2c-1$ and $2c$. 

Conversely, a positive count for $s$-cubes in a $k$-coloring graph implies some minimal $Q_s$-generator $(U,C)$ such that $\rho_{G-U,r_{(U,C)}}(k)>0$. 
By Lemma~\ref{lem:cubeconstruction}, $U=\set{v_0,\dots,v_{s-1}}$ and $C$ consists of every coloring that assigns each $v_i\in U$ to one of two colors $k_{i,0}$ or $k_{i,1}$. Among all such colorings, choose the $c^*\in C$ that uses the smallest color palette $\mathcal P=\set{j_1,\dots,j_\kappa}$ with $j_1<\dots <j_\kappa$. Note that $\kappa\le \lfloor k/2\rfloor$ because $C$ uses at least $2\kappa$ colors and $\rho_{G-U,r_{(U,C)}}(k)>0$. Then for $\sigma_\mathcal P$ as in Eq.~\eqref{eq:palette-perm}, let $c$ be the coloring on $U$ defined by $c(v_i)=\sigma_{\mathcal P}(c^*(v_i))$ for all $v_i\in U$.  Then for $C'$ the set of all colorings $c'$ that assign each $v\in U$ either $2c(v)-1$ or $2c(v)$, $(U,C')$ must also be a $Q_s$-generator with $\rho_{G-U,r_{(U,C')}}(k)=\rho_{G-U,r_{(U,c)}}(k)>0$ because the colors for the hypercube respect the adjacency within $U$ and the hypercube uses no more colorings than that induced by our original $(U,C)$.
\end{proof}

If instead we are satisfied with a simple sufficient condition for presence of a particular type of hypercube, we can restrict our focus to independent sets in a base graph, which we do in the following corollary.
 In particular, if the set $U$ being recolored in $Q_s$ is independent, then it is properly colored by the constant coloring $c(v)=1$, so if the rest of the graph is $\ell$-colorable, then $\ell+2$ colors guarantee a coloring for the whole graph with color swaps for $U$ yielding $Q_s$. 
 
\begin{corollary}\label{cor:hypercube}
        If graph $G$ has an independent set $U$ with $\abs{U}=s$ and $\chi(G-S)=\ell$, then $\pi_G^{(Q_s)}(k)>0$ for all $k\ge \ell+2$. 
\end{corollary}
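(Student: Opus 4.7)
The plan is to reduce to Theorem~\ref{thm:hypercubes}, which reformulates $\pi_G^{(Q_s)}(k) > 0$ as the existence of an $s$-set $U \subseteq V(G)$, a proper coloring $c : U \to [\lfloor k/2 \rfloor]$, and a proper $k$-coloring of $G - U$ respecting the restraint $r_{(U,c)}$ of Eq.~\eqref{eq:hypercube-r}. Given an independent $U$ of size $s$ and a proper $\ell$-coloring of $G - U$, I would exhibit such data directly, so the corollary becomes essentially a packaging of existing hypotheses into the form demanded by the theorem.

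Concretely, I would take the given set $U$ and define $c$ to be the constant coloring $c(v) = 1$ for every $v \in U$. Since $U$ is independent in $G$, there are no edges inside $G[U]$, so $c$ is trivially a proper coloring. The condition $k \ge \ell + 2 \ge 2$ ensures $1 \in [\lfloor k/2 \rfloor]$, so $c$ has the required codomain. With this choice, the formula in Eq.~\eqref{eq:hypercube-r} collapses to $r_{(U,c)}(v) = \{1, 2\}$ whenever $v \in V(G) - U$ has a neighbor in $U$, and $r_{(U,c)}(v) = \emptyset$ otherwise.

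It then remains to show $\rho_{G - U, r_{(U,c)}}(k) > 0$. Since $\chi(G - U) = \ell$, fix any proper coloring of $G - U$ with $\ell$ colors and relabel them to land in the palette $\{3, 4, \ldots, \ell + 2\} \subseteq [k]$, which is possible because $k \ge \ell + 2$. This shifted coloring is still proper, and it avoids the colors $1$ and $2$ entirely, so it automatically satisfies every restraint $r_{(U,c)}(v) \subseteq \{1,2\}$. Thus $\rho_{G - U, r_{(U,c)}}(k) \ge 1$, and Theorem~\ref{thm:hypercubes} yields $\pi_G^{(Q_s)}(k) > 0$. There is no genuine obstacle here; the only point worth emphasizing is that reserving the two smallest colors exclusively for toggling vertices of $U$ is what makes the constant choice $c \equiv 1$ work, which in turn is what makes independence of $U$ (rather than a stronger structural assumption) sufficient.
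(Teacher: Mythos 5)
Your proposal is correct and follows essentially the same route as the paper: the paper's justification (given in the sentence preceding the corollary) likewise takes the constant coloring $c\equiv 1$ on the independent set $U$, notes that the induced restraint only forbids colors $1$ and $2$ on neighbors of $U$, and uses an $\ell$-coloring of $G-U$ shifted away from those two colors to get $\rho_{G-U,r_{(U,c)}}(k)>0$, then applies Theorem~\ref{thm:hypercubes}. Your write-up just makes the palette-shifting step and the codomain check explicit.
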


We now consider how this theorem and corollary do and do not serve as invariants for pairs of graphs studied previously. Our corollary shows that although chromatic pairs polynomials (counting $Q_1=P_2$) cannot distinguish the trees $T_2$ and $T_3$ in Figure~\ref{fig:trees}, higher-dimensional hypercube polynomials can. 
To see why, note that $T_3$ contains an independent set of size four, 
so $\mathcal C_3(T_3)$ contains 4-dimensional hypercubes corresponding to this independent set swapping between two colors with the rest of the graph fixed as the third color. For any four vertices of $T_2$ to participate in a 4-dimensional hypercube in $\mathcal C_3(T_2)$, they would have to be independent (not possible in $T_2$), or neighboring vertices would have to have disjoint pairs of colors (not possible with only $k=3$ colors).

 However, even the more general Theorem~\ref{thm:hypercubes} cannot help distinguish our examples on seven vertices, because cubes of each dimension first appear at the same $k$ for each graph. Both this theorem and its corollary are rather coarse invariants in that they establish presence or absence of certain features in the coloring graph, not the counts that we have focused on up until this point. We can verify computationally that although squares appear at $k=4$ in the coloring graphs for each of those graphs, the actual square count as determined by Eq.~\eqref{eq:C4} differs. Appendix~\ref{app:hypercubes} fully justifies these claims.

In our final section below, we conjecture more broadly that any pair of non-isomorphic graphs admit some $H$ for which the corresponding $H$-polynomials are different. 
Although hypercubes are natural candidates to study, our conjecture does not require $H$ to be a hypercube.

\section{Closing Conjectures}\label{sect:closing-conj}
We have shown that particular instantiations of our (chromatic) $H$-polynomial serve as refinements of the chromatic polynomial as a graph invariant, at least for particular classes of graphs. In particular, the chromatic pairs polynomial refines the chromatic polynomial for trees. We conjecture that the same holds for all graphs.

\begin{conjecture}\label{conj:edge-implies-chromatic}
    If two graphs $G_1$ and $G_2$ satisfy $\pi_{G_1}^{(P_2)}(k) = \pi_{G_2}^{(P_2)}(k)$, then $\pi_{G_1}(k) = \pi_{G_2}(k)$.
\end{conjecture}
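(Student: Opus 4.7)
The plan is to show that enough coefficients of $\pi_G(k)$ can be recovered from $\pi_G^{(P_2)}(k)$ to determine $\pi_G$ completely. Theorem~\ref{thm:edge} already pins down $n$ and $m$ from the two leading coefficients of $\pi_G^{(P_2)}$, which forces the two leading coefficients ($1$ and $-m$) of $\pi_G$; Theorem~\ref{thm:lowcoeffs} reads off the number of connected components from the lowest nonzero coefficient, pinning down the multiplicity of $k$ as a root of $\pi_G$. The task is then to recover the remaining interior coefficients.

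The central identity I would exploit is the following. Let $G^{\bullet v}$ be the graph obtained from $G$ by splitting the vertex $v$ into two mutually adjacent vertices $v_1, v_2$ both incident to every vertex of $N(v)$. Summing over the $k(k-1)$ ordered choices of distinct colors for $(v_1, v_2)$ gives $\pi_{G^{\bullet v}}(k) = k(k-1)\,\rho_{r_v^{(2)}}(k)$, so Eq.~\eqref{eq:edge_formula} rewrites as
\begin{equation*}
\pi_G^{(P_2)}(k) \;=\; \tfrac{1}{2}\sum_{v\in V(G)} \pi_{G^{\bullet v}}(k).
\end{equation*}
Applying deletion-contraction to the edge $v_1 v_2$ in each $G^{\bullet v}$ yields $\pi_{G^{\bullet v}}(k) = \pi_{G^{\otimes v}}(k) - \pi_G(k)$, where $G^{\otimes v}$ is the vertex-doubling of $v$ into two non-adjacent copies sharing $N(v)$. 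Summing over $v$ produces the linear relation
\begin{equation*}
\sum_{v \in V(G)} \pi_{G^{\otimes v}}(k) \;=\; 2\,\pi_G^{(P_2)}(k) + n\,\pi_G(k),
\end{equation*}
which by itself involves two unknown polynomials. To close the system, I would expand each $\pi_{G^{\otimes v}}$ via Whitney's broken-circuit theorem and track how broken-circuit-free subsets of $E(G^{\otimes v})$ aggregate, after summing over $v$, into weighted counts of broken-circuit-free subsets of $E(G)$, and then match this coefficient-by-coefficient against the invariants already extracted from $\pi_G^{(P_2)}$.

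The main obstacle is that any single identity of this form lumps contributions from vertices of very different local structure: disentangling the triangle count $\ell$ from $\sum_i d_i^2$ in the coefficient $a_{n-1}$ of Theorem~\ref{thm:edge} seems to require more than one linear equation. In parallel I would pursue the higher \emph{moment polynomials}
\begin{equation*}
M_v(k,j) \;=\; \sum_{c \text{ proper on } G-v} \bigl(k - \lvert\{c(u) : u \in N(v)\}\rvert\bigr)^{j} \qquad (j \ge 2),
\end{equation*}
each of which is itself the chromatic polynomial of a multiply-doubled graph; summing over $v$ yields an infinite hierarchy of linear relations all involving $\pi_G$ as an unknown. The hardest step is to show that some finite truncation of this hierarchy, combined with the coefficient formulas of Theorem~\ref{thm:edge}, provides enough independent equations to force $\pi_G(k)$ uniquely. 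This is precisely where the conjecture could in principle fail, and even an explicit pair of graphs satisfying all relations in a given truncation but disagreeing on $\pi_G$ would refute it.
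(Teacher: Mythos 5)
You are attempting to prove Conjecture~\ref{conj:edge-implies-chromatic}, which the paper does not prove: it is posed as an open problem, supported only by partial evidence (Theorem~\ref{thm:edge} recovers $n$ and $m$ from the two leading coefficients of $\pi_G^{(P_2)}$, hence the leading coefficients $1$ and $-m$ of $\pi_G$, and Theorem~\ref{thm:lowcoeffs} recovers the number of connected components, hence the multiplicity of the root $k=0$). So there is no paper proof to compare against; the only question is whether your argument closes the problem, and it does not. Your identities are correct as far as they go: by permuting colors one indeed has $\pi_{G^{\bullet v}}(k)=k(k-1)\,\rho_{r_v^{(2)}}(k)$, deletion--contraction on $v_1v_2$ gives $\pi_{G^{\bullet v}}=\pi_{G^{\otimes v}}-\pi_G$, and summing against Eq.~\eqref{eq:edge_formula} yields $\sum_{v}\pi_{G^{\otimes v}}(k)=2\pi_G^{(P_2)}(k)+n\,\pi_G(k)$, with $n$ readable from the leading coefficient. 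Equivalently, $\pi_G^{(P_2)}$ encodes the vertex-summed second falling-factorial moment $\sum_v\sum_{c}\bigl(k-\lvert c(N(v))\rvert\bigr)\bigl(k-\lvert c(N(v))\rvert-1\bigr)$, and your $M_v(k,j)$ are chromatic polynomials of multiply-doubled graphs. All of this is fine bookkeeping.

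The genuine gap is the step you yourself flag as unresolved, and it is the entire content of the conjecture. The one relation above has two unknowns, and the proposed hierarchy does not supply further equations in the given data: the only datum is $\pi_G^{(P_2)}$, which is exactly the $j=2$ moment sum, while each sum $\sum_v M_v(k,j)$ for $j\ge 3$ is a new unknown that is not (or at least not obviously) a function of $\pi_G^{(P_2)}$. So every additional relation introduces at least one additional unknown, and no finite truncation is shown to force $\pi_G$; nothing in the proposal establishes the needed independence or closure. Likewise, as you note, the coefficient $a_{n-1}$ of Theorem~\ref{thm:edge} only yields the combination $(n+3)\ell-\frac{1}{2}\sum_i d_i^2$, so even the third coefficient $\binom{m}{2}-\ell$ of $\pi_G$ is not pinned down by your starting observations, and the proposal offers no mechanism to disentangle it. As written this is a research programme with the decisive step missing (and possibly false, as you concede), not a proof; it neither matches a proof in the paper (there is none) nor settles the conjecture.
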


Since $\sum_{v\in G} \rho_{r_v^{(2)}}(k)$ 
 determines $\pi_G^{(P_2)}$  (Equation~\eqref{eq:edge_formula}), we also pose the following weaker conjecture. 

\begin{conjecture}\label{conj:deck-restraint-implies-chromatic}
    If two graphs $G_1$ and $G_2$ share the same multiset $\{\rho_{r_v^{(2)}}(k) \}_{v\in V(G_1)}= \{\rho_{r_v^{(2)}}(k) \}_{v\in V(G_2)}$, then $\pi_{G_1}(k) = \pi_{G_2}(k)$.
\end{conjecture}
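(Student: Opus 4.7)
The plan is to show that the multiset $M = \{\rho_{r_v^{(2)}}(k)\}_{v\in V(G)}$ encodes enough data to reconstruct $\pi_G(k)$, by first extracting basic invariants of $G$, then reformulating each $\rho_{r_v^{(2)}}$ in terms of a simpler graph operation, and finally piecing together $\pi_G$. First, I would extract from $M$ the parameters $n=|M|$, $m$, and the full degree sequence: since each $\rho_{r_v^{(2)}}$ is the restrained chromatic polynomial of $G-v$ with $m-\deg(v)$ edges and $2\deg(v)$ restraints, its subleading coefficient is $-(m+\deg(v))$ by the restrained analog of Whitney's theorem (see \cite{Ere19}). Thus $m$ is recovered from the sum of these coefficients (together with $\sum_v \deg(v) = 2m$), and each $\deg(v)$ from the individual entries, matching the first two coefficients of $\pi_G(k)=k^n-mk^{n-1}+\cdots$ required by Whitney.

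Next, I would use the reformulation $k(k-1)\rho_{r_v^{(2)}}(k)=\pi_{\hat G_v}(k)$, where $\hat G_v$ is obtained from $G$ by adding a new vertex $v'$ adjacent to $v$ and to every vertex of $N_G(v)$ (a true twin of $v$). This identity follows by counting proper $k$-colorings of $\hat G_v$: for each proper $k$-coloring $c$ of $G-v$, set $d_c:=k-|c(N_G(v))|$; the vertices $v$ and $v'$ must take distinct colors from the $d_c$ available, yielding $d_c(d_c-1)$ extensions, and summing over $c$ gives $k(k-1)\rho_{r_v^{(2)}}(k)$ by the color symmetry of the restraint. Rearranging gives the moment identity $\sum_c d_c^2 = \pi_G(k)+k(k-1)\rho_{r_v^{(2)}}(k)$, so $M$ determines the multiset $\{\pi_{\hat G_v}(k)\}_v$ and, at each vertex $v$, the second factorial moment of the distribution $\{d_c\}$ over proper $k$-colorings of $G-v$.

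With this setup, the goal becomes recovering $\pi_G(k)=\sum_c d_c$ from the second factorial moments $\{\sum_c d_c(d_c-1)\}_v$. I would attempt induction on $n$: the base cases are easy (for instance, $G=N_n$ is detected when every entry of $M$ equals $k^{n-1}$, giving $\pi_G(k)=k^n$), and the inductive step would identify a vertex $v$ whose deletion leaves $G-v$ with a multiset of restrained polynomials reconstructible from $M$ and then reassemble $\pi_G$ via a deletion-type recurrence or a chromatic-polynomial Kelly-type lemma applied to the deck $\{\pi_{G-v}\}_v$. In parallel I would look for an explicit linear combination of the $\rho_{r_v^{(2)}}(k)$ and quantities derived from the degree sequence that equals $\pi_G(k)$, using the structure of restrained chromatic polynomials (e.g. broken-circuit expansions as in \cite{Ere19}) to carry subleading coefficients past the three already handled by Theorem~\ref{thm:lowcoeffs}.

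The main obstacle is that the second factorial moment at a single vertex generically does not determine the first moment, so recovering $\pi_G(k)$ from $M$ must leverage all $n$ entries simultaneously together with the global compatibility constraint that they arise from a common $G$. Concretely, one needs either an explicit formula producing $\pi_G(k)$ from the polynomials $\rho_{r_v^{(2)}}(k)$ plus the extracted degree sequence, or a combinatorial argument ruling out two graphs with identical $M$ but distinct $\pi_G$. A natural route is to adapt classical chromatic-polynomial reconstruction arguments to the graphs $\hat G_v$, but the $\hat G_v$ share much structure (each contains $G$ as an induced subgraph) while differing in ways not obviously retrievable from $M$, so any such adaptation must exploit features of the true-twin operation beyond what the classical reconstruction machinery supplies.
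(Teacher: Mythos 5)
This statement is Conjecture~\ref{conj:deck-restraint-implies-chromatic}, which the paper leaves \emph{open}: it is posed as a weaker companion to Conjecture~\ref{conj:edge-implies-chromatic} and compared to the Polynomial Reconstruction Problem, and no proof is offered (the subsequent work of Hogan--Scott--Tamitegama--Tan resolves Conjecture~\ref{conj:finite-col-graphs-determine-G}, not this one). So the only question is whether your argument actually closes it, and it does not. The preliminary reductions you make are correct and worth recording: by Erey's results, as used in the derivation of Eq.~\eqref{eq:form_first2}, each $\rho_{r_v^{(2)}}$ is monic of degree $n-1$ with subleading coefficient $-(m+\deg(v))$, so the multiset determines $n$, $m$, and the degree sequence; and your identity $k(k-1)\,\rho_{r_v^{(2)}}(k)=\pi_{\hat G_v}(k)$, where $\hat G_v$ adjoins a true twin of $v$, is a valid double count (fix an ordered pair of colors forbidden to $N(v)$, versus fix a proper coloring $c$ of $G-v$ and count its $d_c(d_c-1)$ extensions), giving $\sum_c d_c(d_c-1)=k(k-1)\rho_{r_v^{(2)}}(k)$ and hence $\sum_c d_c^2=\pi_G(k)+k(k-1)\rho_{r_v^{(2)}}(k)$, since $\sum_c d_c=\pi_G(k)$. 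Combined with Theorem~\ref{thm:edge} (the multiset determines the sum, hence $\pi_G^{(P_2)}$), this also recovers the triangle count and therefore the first three coefficients of $\pi_G$.

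The genuine gap is exactly the step you defer: passing from the second factorial moments $\sum_c d_c(d_c-1)$ at each vertex, equivalently from the deck $\{\pi_{\hat G_v}(k)\}_{v\in V(G)}$, to the first moment $\sum_c d_c=\pi_G(k)$. Your three proposed routes --- induction on $n$ with an unspecified reassembly step, a hoped-for explicit linear combination of the $\rho_{r_v^{(2)}}$ and degree data equal to $\pi_G$, and an adaptation of chromatic reconstruction arguments to the $\hat G_v$ --- are all left as directions rather than arguments, and you acknowledge yourself that single-vertex moment data does not generically determine the first moment. Moreover, the fallback of applying a Kelly-type lemma to the deck $\{\pi_{G-v}(k)\}_v$ would amount to proving the Polynomial Reconstruction Problem, which is itself open; the paper explicitly remarks that it is unclear whether the extra restraint information makes this deck any more tractable. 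In short, you have a correct reformulation and a recovery of low-order invariants, but the conjecture's content --- ruling out two graphs with the same restrained deck but different chromatic polynomials --- remains unproved.
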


Conjecture~\ref{conj:deck-restraint-implies-chromatic} is similar to the \emph{Polynomial Reconstruction Problem} (PRP) \cite{Sch77} which states that $\pi_G(k)$ can be recovered from the multiset $\{\pi_{G-v}(k)\}_{v\in V(G)}$. 
Conjectures~\ref{conj:deck-restraint-implies-chromatic}
 and PRP differ in the information contained in each of the decks, and it is unclear which conjecture is more tractable. Our restrained chromatic polynomials may contain additional information compared to the (unrestrained) chromatic polynomials on the same subgraphs because the restraints give some indication about how the corresponding $v$ are connected to the rest of the graph; it is unclear whether this information is useful for the problem of reconstructing the chromatic polynomial of the overall graph.

We may further generalize Conjecture~\ref{conj:edge-implies-chromatic} to ask whether there may be a partial ordering on subgraphs $H$ defined by $H_1\le H_2$ for any $H_1,H_2$ such that every pair of graphs distinguished by their $H_1$-polynomials is also distinguished by their $H_2$-polynomials. Even if there is no universal $H$ that is a complete invariant for all graphs, it is conceivable that for any given graph $G$, there may be specific choices of $H$ for which the polynomials $\pi_{G}^{(H)}(k)$ determine $G$. In all our examples of non-isomorphic graphs, we observed that every pair differed on some $H$-polynomial, which implies that they differed on the family of coloring graphs. We propose that the family of coloring graphs can function as a complete graph invariant. This prediction is equivalent to a conjecture stated fully in terms of $H$-polynomials. If two coloring graphs are isomorphic, then they will have the same counts for all $H$. Conversely, if $\mathcal{C}_k(G_1)$ and $\mathcal{C}_k(G_2)$ have the same number of induced copies of $H$ for all $H$, we first take $H=N_1$ to see that $\mathcal{C}_k(G_1)$ and $\mathcal{C}_k(G_2)$ share the same number of vertices.   Then for each $k$, taking $H=\mathcal{C}_k(G_1)$, we see that $\pi_{G_1}^{(\mathcal C_k(G_1))}(k)=\pi_{G_2}^{(\mathcal C_k(G_1))}(k)=1$, which implies that $\mathcal{C}_k(G_1)\cong \mathcal{C}_k(G_2)$.
 We conjecture that either of these equivalent conditions is a complete graph invariant.
 
\begin{conjecture}\label{conj:col-graphs-determine-G}
For any graph $G$, the collection $\set{\mathcal C_k(G)}_{k\ge 1}$ uniquely determines $G$. Equivalently stated, graphs $G_1$ and $G_2$ are isomorphic if and only if $\pi_{G_1}^{(H)}=\pi_{G_2}^{(H)}$ for every $H$.

\end{conjecture}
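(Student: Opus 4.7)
The forward implication of the equivalent second formulation is immediate: any isomorphism $\varphi\colon G_1 \to G_2$ induces, for each $k$, an isomorphism $\mathcal{C}_k(G_1) \to \mathcal{C}_k(G_2)$ via $c \mapsto c \circ \varphi^{-1}$, and these preserve induced-subgraph counts for every $H$. The paper already observes that agreement on all $\pi^{(H)}$ conversely implies $\mathcal{C}_k(G_1)\cong \mathcal{C}_k(G_2)$ for every $k\geq 1$, so the remaining task is to reconstruct $G$ purely graph-theoretically from the family $\set{\mathcal{C}_k(G)}_{k\geq 1}$. My plan is to prove the stronger statement that a single $\mathcal{C}_k(G)$ with $k$ sufficiently large relative to $\Delta(G)$ already determines $G$, and then observe that $\Delta(G)$ is itself recoverable from the family.

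The core of the argument is a local reconstruction at any fixed vertex $c$ of $\mathcal{C}_k(G)$. Two neighbors of $c$ are adjacent in $\mathcal{C}_k(G)$ if and only if they differ from $c$ at the same vertex of $G$, so the open neighborhood of $c$ partitions into cliques $\Phi_v$ indexed by $v\in V(G)$, and the sets $\Phi_v \cup \set{c}$ are exactly the maximal cliques of $\mathcal{C}_k(G)$ through $c$. This gives a canonical bijection between $V(G)$ and the maximal cliques through $c$. Adjacency in $G$ is then extracted by a common-neighbor test: for $c_1 \in \Phi_{v_1}$ and $c_2 \in \Phi_{v_2}$ with $v_1 \neq v_2$, a direct check shows that the only candidates for a common neighbor of $c_1$ and $c_2$ in $\mathcal{C}_k(G)$ are $c$ itself and the coloring $c_{12}$ obtained by simultaneously applying both single-vertex recolorings, and $c_{12}$ fails to be proper precisely when $v_1v_2\in E(G)$ and the two new colors coincide. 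Hence $v_1v_2\in E(G)$ if and only if some pair $(c_1,c_2)\in \Phi_{v_1}\times \Phi_{v_2}$ has exactly one common neighbor in $\mathcal{C}_k(G)$, a condition entirely visible in the abstract graph.

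The main obstacle is bootstrapping a suitable $k$ and a suitable $c$ from within the abstract family. For the clique partition to satisfy $\abs{\Phi_v}=k-\deg(v)-1$ and for every adjacent pair $v_1v_2$ to force a detectable color collision, I would work with a \emph{generic} coloring $c$ with $\abs{c(N(v))}=\deg(v)$ for every $v$, together with $k\geq 2\Delta(G)+2$. Generic colorings are characterized abstractly as those attaining the maximum vertex-degree $n(k-1)-2m$ in $\mathcal{C}_k(G)$, and $n$ and $m$ are accessible from $H$-polynomials by Theorems~\ref{thm:lowcoeffs}~and~\ref{thm:edge}; the sizes of the maximal cliques through any generic $c$ then recover the full degree sequence and in particular $\Delta(G)$, allowing us to pass to a sufficiently large $k$ in the family. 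Independence of the reconstruction from the choice of $c$ is automatic because any abstract isomorphism of $\mathcal{C}_k(G)$ with itself sends generic colorings to generic colorings and preserves both maximal-clique structure and common-neighbor counts.

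The most delicate point will be the common-neighbor existence claim: verifying that for every adjacent pair $v_1v_2$ and every generic $c$ one can actually locate colors $a_1=a_2$ avoiding the combined restraints imposed by $c$ on $N(v_1)\cup N(v_2)\cup \set{v_1,v_2}$, which a straightforward inclusion-exclusion establishes for $k\geq 2\Delta(G)+2$ but which demands careful treatment when $N(v_1)\cap N(v_2)$ is large or when $c$ is only marginally generic. Once this step is pinned down, the conjecture follows by running the local reconstruction at any generic $c$ in $\mathcal{C}_k(G)$ for any $k$ beyond the bootstrapped threshold.
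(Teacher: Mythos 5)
First, the point of comparison: the paper does not prove this statement---it is posed as a conjecture. What the paper itself establishes is only the equivalence of the two formulations (isomorphic graphs have isomorphic coloring graphs and hence equal $H$-counts; conversely, equal counts for all $H$ force $\mathcal C_k(G_1)\cong\mathcal C_k(G_2)$ by taking $H=N_1$ and then $H=\mathcal C_k(G_1)$); the actual reconstruction of $G$ is credited to the follow-up work \cite{HSTT24}, which proves the stronger Conjecture~\ref{conj:finite-col-graphs-determine-G}. So your proposal attempts a proof the paper does not contain, and its core is sound and in the same spirit as that external resolution: recover $G$ from a single $\mathcal C_k(G)$ with $k$ large. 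Your local analysis is correct: the neighborhood of a coloring $c$ partitions into cliques $\Phi_v$, the sets $\set{c}\cup\Phi_v$ are exactly the maximal cliques through $c$ (giving the bijection with $V(G)$ once $k\ge\Delta(G)+2$, so that every $\Phi_v\ne\emptyset$), and the only candidate common neighbors of $c_1\in\Phi_{v_1}$, $c_2\in\Phi_{v_2}$ are $c$ and the merged coloring $c_{12}$, which is improper exactly when $v_1v_2\in E(G)$ and the two new colors coincide. Moreover, the step you flag as most delicate is actually easy: for adjacent $v_1v_2$ the colors to avoid are $c(N[v_1]\cup N[v_2])$, a set of size at most $\deg(v_1)+\deg(v_2)\le 2\Delta(G)$, so $k\ge 2\Delta(G)+2$ always leaves a common fresh color, for any proper $c$, generic or not.

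The genuine gaps are in your bootstrap. Generic (rainbow-neighborhood) colorings attain the \emph{minimum} degree in $\mathcal C_k(G)$, not the maximum: $\deg_{\mathcal C_k(G)}(c)=\sum_{v}\bigl(k-\abs{c(N(v))}-1\bigr)\ge n(k-1)-2m$, with equality precisely when every neighborhood is rainbow, so your abstract criterion as stated selects the wrong vertices. If you keep genericity you also owe an existence argument for a generic coloring at the chosen $k$ (properly coloring $G^2$ does it, but that needs roughly $k>\Delta(G)^2$). The cleaner fix is to drop genericity altogether: you only use it to extract $\Delta(G)$, but $\Delta(G)\le n-1$ and $n$ is already recoverable from the family since $\abs{V(\mathcal C_k(G))}=\pi_G(k)$ has degree $n$ (or from Theorems~\ref{thm:lowcoeffs}~and~\ref{thm:edge} in the $\pi^{(H)}$ formulation), so you may simply run the reconstruction at an arbitrary vertex $c$ of $\mathcal C_k(G)$ for any $k\ge 2n$; the clique sizes at a non-generic $c$ are no longer the degrees of $G$, but the adjacency test never needed them. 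With the min/max correction (or the simplification just described), your argument is a correct proof of the conjecture, indeed of a single-$k$ reconstruction statement close to the one proved in \cite{HSTT24}.
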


We also state a stronger conjecture where only finitely many coloring graphs are needed. We suspect that Conjecture~\ref{conj:col-graphs-determine-G} may be equivalent to the seemingly stronger Conjecture~\ref{conj:finite-col-graphs-determine-G}. Given that the graph $G$ is finite, we expect that finitely many colors suffice to accurately capture the structure of $G$.

\begin{conjecture}\label{conj:finite-col-graphs-determine-G}
    There exists some function $f\colon \text{Graphs} \to\mathbb N$ that maps finite graphs to natural numbers such that for any graph $G$, the collection $\set{\mathcal C_k(G)}_{k=1}^{f(G)}$ uniquely determines $G$.
\end{conjecture}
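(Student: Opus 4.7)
The plan is to prove the conjecture by establishing the stronger fact that a \emph{single} coloring graph $\mathcal{C}_k(G)$ already determines $G$ when $k = |V(G)|+1$, and then setting $f(G) := |V(G)| + 1$. The collection $\{\mathcal{C}_k(G)\}_{k=1}^{f(G)}$ thereby determines $G$: the length of the collection reveals $f(G)$, and its final entry determines $G$. Let $n := |V(G)|$ throughout.

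First I will fix any vertex $c$ of $\mathcal{C}_{n+1}(G)$, which exists since $\chi(G) \le n$. I will then observe that the open neighborhood $N(c)$ partitions canonically as a disjoint union of cliques: for each $v \in V(G)$, let $K_v$ be the set of colorings differing from $c$ only at $v$. Then $K_v$ is a clique (any two recolorings of $v$ are adjacent), and for $u \ne v$ any element of $K_u$ disagrees with any element of $K_v$ on two vertices, so the $K_v$ are pairwise non-adjacent connected components of $N(c)$. Since $|K_v| \ge (n+1) - 1 - |c(N(v))| \ge 1$, all $n$ cliques are non-empty and $n$ is recoverable as the number of connected components of $N(c)$.

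Next I will distinguish adjacent and non-adjacent pairs of vertices of $G$ via a 4-cycle count. For each pair of components $(K_u, K_v)$, let $X_{uv}$ be the number of pairs $(c_1,c_2) \in K_u \times K_v$ admitting a common neighbor $c' \ne c$ in $\mathcal{C}_{n+1}(G)$. A direct case analysis shows that any such $c'$ is uniquely determined as the simultaneous application of both single-vertex recolorings, and that $c'$ is proper precisely when either $uv \notin E(G)$, or $uv \in E(G)$ together with the new colors at $u$ and $v$ being distinct. Consequently, with $k = n+1$,
\[
|K_u|\cdot|K_v| - X_{uv} = \begin{cases} 0, & uv \notin E(G),\\ k - |c(N(u)\cup N(v))|, & uv \in E(G). \end{cases}
\]
Because $|N(u) \cup N(v)| \le n < k$, the second case is strictly positive, so the deficiency $|K_u|\cdot|K_v| - X_{uv}$ detects adjacency. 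Performing this for all pairs of components of $N(c)$ recovers the edge set of $G$ on the unlabeled vertex slots $\{K_v\}$, yielding $G$ up to isomorphism.

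The main obstacle is verifying that this recovery procedure is genuinely intrinsic to the isomorphism class of $\mathcal{C}_{n+1}(G)$: neither the choice of base coloring $c$ nor the labeling of the components should matter. This is handled by observing that any automorphism of $\mathcal{C}_{n+1}(G)$ permutes the cliques $K_v$ while preserving the counts $|K_u|\cdot|K_v|$ and $X_{uv}$, so the reconstructed graph is automorphism-invariant and hence well-defined up to isomorphism. A secondary delicate point is the sharpness of the bound $k \ge n+1$: at $k = n$ the quantity $|c(N(u)\cup N(v))|$ can saturate at $n$, collapsing the deficiency to $0$ and defeating the distinction, so $n+1$ is the first genuinely safe value of $f(G)$.
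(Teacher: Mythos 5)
The statement you are asked about is not proved in the paper at all: Conjecture~\ref{conj:finite-col-graphs-determine-G} is left open there, with only the remark that Hogan, Scott, Tamitegama and Tan later established it with $f(G)=5\abs{V(G)}^2+1$. So there is no internal proof to compare against, and your argument must stand on its own; as far as I can check, it does. The decomposition of the link $N(c)$ into pairwise non-adjacent cliques $K_v$, nonempty because $\abs{K_v}=k-1-\abs{c(N(v))}\ge k-n\ge 1$ once $k=n+1$, correctly recovers $n$ as the number of components. The case analysis showing that a common neighbor of $c_1\in K_u$ and $c_2\in K_v$ other than $c$ must be the simultaneous recoloring is airtight (any difference outside $\{u,v\}$ would force $c'(u)$ to equal both $a$ and $c(u)$), and the diagonal coloring is proper exactly unless $uv\in E(G)$ and the two new colors coincide. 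Hence the deficiency $\abs{K_u}\cdot\abs{K_v}-X_{uv}$ vanishes when $uv\notin E(G)$ and equals the number of colors simultaneously available at $u$ and $v$, which is at least $k-\abs{N(u)\cup N(v)}\ge 1$, when $uv\in E(G)$. This reconstructs $G$ from $\mathcal{C}_{n+1}(G)$ alone, so $f(G)=\abs{V(G)}+1$ suffices --- a stronger and numerically much sharper statement than the cited resolution, which is reason to write it up with extra care, but I did not find a gap in the core argument.

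One step should be reworded. Automorphisms of $\mathcal{C}_{n+1}(G)$ need not fix your chosen base coloring $c$, so they do not literally ``permute the cliques $K_v$'' of a fixed link; invoking them does not by itself give well-definedness. What you need --- and what your main computation already provides, since $c$ was arbitrary --- is that for \emph{every} choice of base vertex the reconstructed graph is isomorphic to $G$. Then any isomorphism $\mathcal{C}_{n+1}(G_1)\to\mathcal{C}_{n+1}(G_2)$ carries a base vertex, its link decomposition, and the common-neighbor counts of one graph to those of the other, so the two reconstructions are isomorphic and $G_1\cong G_2$ follows. It is also worth stating explicitly that collections of different lengths come from graphs with different vertex counts, which is what lets you read $n$ off the collection before applying the reconstruction to its last entry.
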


Subsequent to a posted preprint of this work, \cite{HSTT24} proved Conjecture~\ref{conj:finite-col-graphs-determine-G} for $f(G)=5\abs{V(G)}^2+1$. Therefore, both Conjectures~\ref{conj:finite-col-graphs-determine-G}~and~\ref{conj:col-graphs-determine-G} have now been proven. Although coloring graphs with sufficiently large $k$ relative to $G$ can be used to recover $G$, the more general framing of partial orderings on $H$-polynomials still leaves much to be explored.

\medskip 

\textbf{Acknowledgments.}
We are grateful to the referees for their valuable comments and suggestions on this manuscript.

\section*{Statements and Declarations}
The authors declare that no funds, grants, or other support were received during the preparation of this manuscript.  The authors have no relevant financial or non-financial interests to disclose. 
All authors contributed equally to the research and preparation of the manuscript.

\begin{bibdiv}
\begin{biblist}

\bib{AG04}{article}{
Author = {Abrams, Aaron}, 
AUTHOR = {Ghrist, Robert},
title ={State Complexes for Metamorphic Robots},
journal = {The International Journal of Robotics Research},
volume = {23},
number = {7-8},
pages = {811-826},
year = {2004},
doi = {10.1177/0278364904045468},
}

\bib{ABCHMSS21}{incollection}{
    AUTHOR = {Adaricheva, Kira},
    AUTHOR = {Bozeman, Chassidy},
    AUTHOR = {Clarke, Nancy E.},
    AUTHOR = {Haas, Ruth},
    AUTHOR = {Messinger, Margaret-Ellen},
    AUTHOR = {Seyffarth, Karen},
    AUTHOR = {Smith, Heather C},
     TITLE = {Reconfiguration graphs for dominating sets},
 BOOKTITLE = {Research trends in graph theory and applications},
    SERIES = {Assoc. Women Math. Ser.},
    VOLUME = {25},
     PAGES = {119--135},
 PUBLISHER = {Springer, Cham},
      YEAR = {2021},
      ISBN = {978-3-030-77983-2; 978-3-030-77982-5},
}

\bib{ABFR18}{article}{,
    AUTHOR = {Alvarado, Francisco},
    AUTHOR = {Butts, Ashley},
    AUTHOR = {Farquhar, Lauren},
    AUTHOR = {Russell, Heather M.},
     TITLE = {Forbidden subgraphs of coloring graphs},
   JOURNAL = {Involve},
    VOLUME = {11},
      YEAR = {2018},
    NUMBER = {2},
     PAGES = {311--324},
      ISSN = {1944-4176,1944-4184},
}

\bib{AEHHNW18}{article}{
    AUTHOR = {Asplund, John},
    AUTHOR = {Edoh, Kossi}, 
    AUTHOR = {Haas, Ruth}, 
    AUTHOR = {Hristova, Yulia},
    AUTHOR = {Novick, Beth},
    AUTHOR = {Werner, Brett},
     TITLE = {Reconfiguration graphs of shortest paths},
   JOURNAL = {Discrete Math.},
    VOLUME = {341},
      YEAR = {2018},
    NUMBER = {10},
     PAGES = {2938--2948},
      ISSN = {0012-365X,1872-681X},

}

\bib{BFHRS16}{article}{
    AUTHOR = {Beier, Julie},
    AUTHOR = {Fierson, Janet}, 
    AUTHOR = {Haas, Ruth},
    AUTHOR = {Russell, Heather M.},
    AUTHOR = {Shavo, Kara},
     TITLE = {Classifying coloring graphs},
   JOURNAL = {Discrete Math.},
    VOLUME = {339},
      YEAR = {2016},
    NUMBER = {8},
     PAGES = {2100--2112},
      ISSN = {0012-365X,1872-681X},
}

\bib{BKMRSSX23}{article}{
    AUTHOR = {Bhakta, Prateek},
    AUTHOR = {Krehbiel, Sara},
    AUTHOR = {Morris, Rachel},
    AUTHOR = {Russell, Heather M.},
    AUTHOR = {Sathe, Aalok},
    AUTHOR = {Su, Wesley},
    AUTHOR = {Xin, Maxine},
     TITLE = {Block symmetries in graph coloring reconfiguration systems},
   JOURNAL = {Adv. in Appl. Math.},
    VOLUME = {149},
      YEAR = {2023},
     PAGES = {Paper No. 102556, 17},
      ISSN = {0196-8858,1090-2074},
}

\bib{BBFKKR19}{article}{
    AUTHOR = {Bhakta, Prateek},
    AUTHOR = {Buckner, Benjamin Brett},
    AUTHOR = {Farquhar, Lauren},
    AUTHOR = {Kamat, Vikram}, 
    AUTHOR = {Krehbiel, Sara},
    AUTHOR = {Russell, Heather M.},
     TITLE = {Cut-colorings in coloring graphs},
   JOURNAL = {Graphs Combin.},
    VOLUME = {35},
      YEAR = {2019},
    NUMBER = {1},
     PAGES = {239--248},
      ISSN = {0911-0119,1435-5914},
}

\bib{Bir12}{article}{
    AUTHOR = {Birkhoff, George D.},
     TITLE = {A determinant formula for the number of ways of coloring a
              map},
   JOURNAL = {Ann. of Math. (2)},
    VOLUME = {14},
      YEAR = {1912/13},
    NUMBER = {1-4},
     PAGES = {42--46},
      ISSN = {0003-486X,1939-8980},
}

\bib{Bol98}{book}{
    AUTHOR = {Bollob\'{a}s, B\'{e}la},
     TITLE = {Modern graph theory},
    SERIES = {Graduate Texts in Mathematics},
    VOLUME = {184},
 PUBLISHER = {Springer-Verlag, New York},
      YEAR = {1998},
     PAGES = {xiv+394},
      ISBN = {0-387-98488-7},
}

\bib{CVJ08}{article}{
    AUTHOR = {Cereceda, Luis},
    AUTHOR = {van den Heuvel, Jan},
    AUTHOR = {Johnson, Matthew},
     TITLE = {Connectedness of the graph of vertex-colourings},
   JOURNAL = {Discrete Math.},
    VOLUME = {308},
      YEAR = {2008},
    NUMBER = {5-6},
     PAGES = {913--919},
      ISSN = {0012-365X},
}

\bib{CW78}{incollection}{
    AUTHOR = {Chao, Chong Yun}, 
    AUTHOR = {Whitehead, Jr., Earl Glen},
     TITLE = {On chromatic equivalence of graphs},
 BOOKTITLE = {Theory and applications of graphs ({P}roc. {I}nternat.
              {C}onf., {W}estern {M}ich. {U}niv., {K}alamazoo, {M}ich.,
              1976)},
    SERIES = {Lecture Notes in Math.},
    VOLUME = {Vol. 642},
     PAGES = {121--131},
 PUBLISHER = {Springer, Berlin-New York},
      YEAR = {1978},
      ISBN = {3-540-08666-8},
}

\bib{dohmen2003new}{article}{
  title={A new two-variable generalization of the chromatic polynomial},
  author={Dohmen, Klaus},
  author = {P{\"o}nitz, Andr{\'e}},
  Author = {Tittmann, Peter},
  journal={Discrete Mathematics \& Theoretical Computer Science},
  volume={6},
  year={2003},
  publisher={Episciences. org}
}

\bib{DFFV06}{article}{
    AUTHOR = {Dyer, Martin},
    AUTHOR = {Flaxman, Abraham D.},
    AUTHOR = {Frieze, Alan M.},
    AUTHOR = {Vigoda, Eric},
     TITLE = {Randomly coloring sparse random graphs with fewer colors than
              the maximum degree},
   JOURNAL = {Random Structures Algorithms},
    VOLUME = {29},
      YEAR = {2006},
    NUMBER = {4},
     PAGES = {450--465},
      ISSN = {1042-9832},
}

\bib{Ere19}{article}{
    AUTHOR = {Erey, Aysel},
     TITLE = {A broken cycle theorem for the restrained chromatic function},
   JOURNAL = {Turkish J. Math.},
    VOLUME = {43},
      YEAR = {2019},
    NUMBER = {1},
     PAGES = {355--360},
      ISSN = {1300-0098,1303-6149},
}

\bib{Ere15}{thesis}{
    title        = {An investigation on graph polynomials.},
   author       = {Erey, Aysel},
   year         = {2015},
  address      = {Halifax, Canada},
   school       = {Dalhousie University},
 type         = {PhD thesis},
}

\bib{GS01}{article}{
    AUTHOR = {Gebhard, David D.},
    AUTHOr = {Sagan, Bruce E.},
     TITLE = {A chromatic symmetric function in noncommuting variables},
   JOURNAL = {J. Algebraic Combin.},
    VOLUME = {13},
      YEAR = {2001},
    NUMBER = {3},
     PAGES = {227--255},
      ISSN = {0925-9899,1572-9192},
}

\bib{GP07}{article}{
    AUTHOR = {Ghrist, Robert}, 
    AUTHOR = {Peterson, Valerie},
     TITLE = {The geometry and topology of reconfiguration},
   JOURNAL = {Adv. in Appl. Math.},
    VOLUME = {38},
      YEAR = {2007},
    NUMBER = {3},
     PAGES = {302--323},
      ISSN = {0196-8858},
}

\bib{HS14}{article}{
    AUTHOR = {Haas, Ruth},
    AUTHOR = {Seyffarth, Karen},
     TITLE = {The {$k$}-dominating graph},
   JOURNAL = {Graphs Combin.},
    VOLUME = {30},
      YEAR = {2014},
    NUMBER = {3},
     PAGES = {609--617},
      ISSN = {0911-0119,1435-5914},
}

\bib{HSTT24}{article}{
    AUTHOR = {Hogan, Emma},
    AUTHOR = {Scott, Alex},
    AUTHOR = {Tamitegama, Youri},
    AUTHOR = {Tan, Jane},
     TITLE = {A note on graphs of {$k$}-colourings},
   JOURNAL = {Electron. J. Combin.},
    VOLUME = {31},
      YEAR = {2024},
    NUMBER = {4},
     PAGES = {Paper No. 4.48, 9},
      ISSN = {1077-8926},
       URL = {https://doi.org/10.37236/12853},
}

\bib{Laz95}{article}{
    AUTHOR = {\L azuka, Ewa},
     TITLE = {On chromaticity of graphs},
   JOURNAL = {Discuss. Math. Graph Theory},
    VOLUME = {15},
      YEAR = {1995},
    NUMBER = {1},
     PAGES = {19--31},
      ISSN = {1234-3099,2083-5892},
}

\bib{Mol04}{article}{
    AUTHOR = {Molloy, Michael},
     TITLE = {The {G}lauber dynamics on colorings of a graph with high girth
              and maximum degree},
   JOURNAL = {SIAM J. Comput.},
    VOLUME = {33},
      YEAR = {2004},
    NUMBER = {3},
     PAGES = {721--737},
      ISSN = {0097-5397},
}

\bib{Rea68}{article}{
    AUTHOR = {Read, Ronald C.},
     TITLE = {An introduction to chromatic polynomials},
   JOURNAL = {J. Combinatorial Theory},
    VOLUME = {4},
      YEAR = {1968},
     PAGES = {52--71},
      ISSN = {0021-9800},
}

\bib{Sag20}{book}{
    AUTHOR = {Sagan, Bruce E.},
     TITLE = {Combinatorics: the art of counting},
    SERIES = {Graduate Studies in Mathematics},
    VOLUME = {210},
 PUBLISHER = {American Mathematical Society, Providence, RI},
      YEAR = {2020},
     PAGES = {xix+304},
      ISBN = {978-1-4704-6032-7},
}

\bib{Sch77}{book}{
    AUTHOR = {Schwenk, Allen J.},
     TITLE = {Spectral reconstruction problems},
 BOOKTITLE = {Topics in graph theory ({N}ew {Y}ork, 1977)},
    SERIES = {Ann. New York Acad. Sci.},
    VOLUME = {328},
     PAGES = {183--189},
 PUBLISHER = {New York Acad. Sci., New York},
      YEAR = {1979},
      ISBN = {0-89766-028-5},
}

\bib{Sta95}{article}{
    AUTHOR = {Stanley, Richard P.},
     TITLE = {A symmetric function generalization of the chromatic
              polynomial of a graph},
   JOURNAL = {Adv. Math.},
    VOLUME = {111},
      YEAR = {1995},
    NUMBER = {1},
     PAGES = {166--194},
      ISSN = {0001-8708,1090-2082},
}

\bib{tutte1954}{article}{
  title={A contribution to the theory of chromatic polynomials},
  author={Tutte, William Thomas},
  journal={Canadian Journal of Mathematics},
  volume={6},
  pages={80--91},
  year={1954},
  publisher={Cambridge University Press}
}

\bib{Vig00}{incollection}{
    AUTHOR = {Vigoda, Eric},
     TITLE = {Improved bounds for sampling colorings},
      NOTE = {Probabilistic techniques in equilibrium and nonequilibrium
              statistical physics},
   JOURNAL = {J. Math. Phys.},
    VOLUME = {41},
      YEAR = {2000},
    NUMBER = {3},
     PAGES = {1555--1569},
      ISSN = {0022-2488},
}

\bib{Whi32}{article}{
    AUTHOR = {Whitney, Hassler},
     TITLE = {A logical expansion in mathematics},
   JOURNAL = {Bull. Amer. Math. Soc.},
    VOLUME = {38},
      YEAR = {1932},
    NUMBER = {8},
     PAGES = {572--579},
      ISSN = {0002-9904},
}

\end{biblist}
\end{bibdiv}


\appendix
\section{6-Cycle Polynomial}\label{app:6cycles}
Subsection~\ref{sec:6cycles} describes the 2- and 3-vertex generators needed to instantiate Theorem~\ref{thm:general} for $\pi_{G}^{(C_6)}$. We enumerate the associated restraints below in order to provide a closed-form expression in Eq.~\eqref{eq:C6}.

\begin{itemize}
    \item $s_{uv}^{(6)}$ restrains $u$'s neighbors with $\set{1,2,3}$ and $v$'s neighbors with $\set{4,5,6}$; 
    \item $s_{uv}^{(5)}$ restrains $u$'s neighbors with $\set{1,2,3}$ and $v$'s neighbors with $\set{1,4,5}$; 
    \item $s_{uv}^{(4)}$ restrains $u$'s neighbors with $\set{1,2,3}$ and $v$'s neighbors with $\set{1,2,4}$; 
\item $s_{uv}^{(3)}$ restrains $u$'s and $v$'s neighbors with $\set{1,2,3}$;
        \item $s_{uvw}^{(6)}$ restrains $u$'s neighbors with $\set{1,2}$, $v$'s neighbors with $\set{3,4}$, and $w$'s neighbors with $\set{5,6}$;
    \item $s_{uvw}^{(5)}$ restrains $u$'s neighbors with $\set{1,2}$, $v$'s neighbors with $\set{1,3}$, and $w$'s neighbors with $\set{4,5}$; 
    \item $s_{uvw}^{(4a)}$ restrains $u$'s and $v$'s neighbors with $\set{1,2}$ and $w$'s neighbors with $\set{3,4}$;
    \item $s_{uvw}^{(4b)}$ restrains $u$'s neighbors with $\set{1,2}$, $v$'s neighbors with $\set{1,3}$, and $w$'s neighbors with $\set{2,4}$; 
    \item $s_{uvw}^{(4c)}$ restrains $u$'s neighbors with $\set{1,2}$, $v$'s neighbors with $\set{1,3}$, and $w$'s neighbors with $\set{1,4}$;
    \item $s_{uvw}^{(3a)}$ restrains $u$'s neighbors with $\set{1,2}$, $v$'s neighbors with $\set{1,3}$, and $w$'s neighbors with $\set{2,3}$; 
    \item $s_{uvw}^{(3b)}$ restrains $u$'s and $v$'s neighbors with $\set{1,2}$ and $w$'s neighbors with $\set{1,3}$; and
    \item $s_{uvw}^{(2)}$ restrains $u$'s, $v$'s, and $w$'s neighbors with $\set{1,2}$.
\end{itemize}

Then by considering which colorings of $u,v,w$ are possible depending on the edges within those vertices and appropriately counting symmetries, we have the following closed-form expression: 
\begin{align}\label{eq:C6}
    \pi_{G}^{(C_6)}(k) &= \sum_{uv\in E(G)}\binom{k}{3}\rho_{s_{uv}^{(3)}}(k) + \sum_{\substack{\set{u,v}\in \binom{V(G)}{2} \\ uv\not\in E(G)}} 6\binom{k}{3} \rho_{s_{uv}^{(3)}}(k)\\
    & \quad + \sum_{uv\in E(G)}12\binom{k}{4} \cdot \rho_{s_{uv}^{(4)}}(k) + \sum_{\substack{\set{u,v}\in \binom{V(G)}{2} \\ uv\not\in E(G)}} 72\binom{k}{4} \rho_{s_{uv}^{(4)}}(k) \notag \\
    & \quad + \sum_{uv\in E(G)}60\binom{k}{5} \cdot \rho_{s_{uv}^{(5)}}(k) + \sum_{\substack{\set{u,v}\in \binom{V(G)}{2} \\ uv\not\in E(G)}} 180\binom{k}{5} \rho_{s_{uv}^{(5)}}(k) \notag \\
        & \quad + \sum_{\set{u,v}\in \binom{V(G)}{2}} 120\binom{k}{6} \rho_{s_{uv}^{(6)}}(k) \notag \\
& \quad + \sum_{\set{u,v,w}\in \binom{V(G)}{3}} 360\binom{k}{6} \rho_{s_{uvw}^{(6)}}(k) \notag \\ & \quad + \sum_{\substack{\set{u,v,w}\in \binom{V(G)}{3}\\ uv\not\in E(G)}} \left[ 240 \binom{k}{5} \rho_{s_{uvw}^{(5)}}(k) + 24\binom{k}{4} \rho_{s_{uvw}^{(4a)}}(k) \right] \notag \\
    & \quad +  \sum_{\substack{\set{u,v,w}\in \binom{V(G)}{3}\\ uv,uw\not\in E(G)}} 96\binom{k}{4} \rho_{s_{uvw}^{(4b)}}(k)\notag \\
    & \quad + \sum_{\substack{\set{u,v,w}\in \binom{V(G)}{3}\\ uv,uw,vw\not\in E(G)}} \left[ 96 \binom{k}{4} \rho_{s_{uvw}^{(4c)}}(k) + 26\binom{k}{3} \rho_{s_{uvw}^{(3a)}}(k) + 48\binom{k}{3}\rho_{s_{uvw}^{(3b)}}(k) + 4\binom{k}{2}\rho_{s_{uvw}^{(2)}}(k)\right]. \notag 
\end{align}

\section{Proof of Theorem \ref{thm:lowcoeffs}}
\label{app:coefficients-positivity}

\begin{proof}[Proof of Theorem~\ref{thm:lowcoeffs}]
First, we show that $\rho_{G, r}$ has alternating nonzero coefficients (including the constant term) for any connected graph $G$ and any restraint $r$ that imposes restraint $\set{1,2}$ on at least one vertex and imposes no other restraints. We proceed by induction on the number of edges in $G$. If $G$ is edgeless, then $G=N_1$ and $\rho_{G,r}(k)=k-2$. If $G$ has $m>0$ edges, then $\rho_{G,r}(k)=\rho_{G-e,r}(k)-\rho_{G/e,r_e}(k)$, where $r_e$ denotes the restraint on $G/e$ that unions the restraints of $r$ on the ends of $e$ for the contracted vertex in $G/e$. As $G/e$ is connected, it has a nonzero constant term by induction. Because $G-e$ has one more vertex than $G/e$, the signs of the constant terms in the respective expressions will differ; taking the difference ensures that the resulting polynomial still has alternating nonzero coefficients.

We now reason about the coefficient of the linear term in the chromatic pairs polynomial for a connected graph $G$. Select a vertex $w\in V(G)$ that is not a cut vertex so that $G-w$ is connected. By our earlier observation, $\rho_{G-w,r_w^{(2)}}(k)$ has a nonzero constant term, noting that $\rho_{G-w,r_w^{(2)}}(k)=1$ if $G=N_1$. Then our chromatic pairs polynomial $\pi_{G}^{(P_2)}(k)=\binom{k}{2}\sum_{v\in V(G)} \rho_{G-v,r_v^{(2)}}(k)$
has no constant term and a nonzero coefficient for $k$. Moreover, each of the $\rho_{G-v,r_v^{(2)}}$ have the same degree with alternating nonzero coefficients of the same sign. Since $\binom{k}{2} = \frac{k^2-k}{2}$ has alternating coefficients, it follows that $\pi_{G}^{(P_2)}(k)$ has alternating nonzero coefficients as well.

Next, suppose the result is true for all graphs with $t\ge 1$ connected components, and suppose $G$ on $n$ vertices has $t+1$ connected components, with $G=G_1+G_2$ where $G_1$ has $t$ connected components on $n_1$ vertices and $G_2$ is connected on $n_2$ vertices. By Lemma~\ref{lemma:disjoint}, we have
\[
\pi_{G_1+G_2}^{(P_2)}(k)=\pi_{G_1}^{(P_2)}(k)\cdot \pi_{G_2}(k)+\pi_{G_2}^{(P_2)}(k)\cdot \pi_{G_1}(k)\ ,
\]
and by induction, the lowest degree terms with nonzero coefficients in the four polynomials are for $k^{t},k,k,k^t$, respectively. Moreover, the signs of the products of each of these terms is $(-1)^{n-t}$, so they cannot cancel each other out.

Hence, the chromatic pairs polynomial of a graph with $t$ connected components has $a_t>0$ and $a_{t'}=0$ for each $t'<t$. Moreover, the coefficients $a_t$ for $t'>t$ are nonzero and alternating in sign. Indeed, the four polynomials $\pi_{G_1}^{(P_2)}(k)$, $\pi_{G_2}(k)$, $\pi_{G_2}^{(P_2)}(k)$ and $\pi_{G_1}(k)$ have nonzero alternating coefficients, and so do the products $\pi_{G_1}^{(P_2)}(k)\cdot \pi_{G_2}(k)$ and $\pi_{G_2}^{(P_2)}(k)\cdot \pi_{G_1}(k)$ which have matching degrees. This shows $\pi_{G_1+G_2}^{(P_2)}(k)$ has nonzero alternating coefficients, too.
\end{proof}

\section{Proof of Theorem \ref{thm:edge}}
\label{app:third-coefficient}

\begin{proof}[Proof of Theorem~\ref{thm:edge}]
The proof of the first two coefficients $a_{n+1}$ and $a_n$ was given before the theorem statement.  
In order to calculate the third highest coefficient $a_{n-1}$ of the chromatic pairs polynomial, we will use \cite{Ere19}*{Theorem~4.2.3}, which gives an expression for the third highest coefficient for a restrained chromatic polynomial.  For a graph $G$ with $n$ vertices, $m$ edges, and restraints $r$, the restrained chromatic polynomial is a polynomial of the form $\rho_{G,r}(k)=k^n-b_{n-1}k^{n+1}+b_{n-2}k^{n-2}-\cdots (-1)^{n+1}b_0$.  Then the coefficient $b_{n-2}$ can be written as
\begin{equation}
\label{eq:third_erey}
    b_{n-2} = \binom{m}{2}-\operatorname{Tri}(G)+\sum_{i<j}|r(v_i)||r(v_j)|+m\sum_{v\in V(G)}|r(v_i)|-\sum_{v_iv_j\in E(G)}|r(v_i)\cap r(v_j)| \ ,
\end{equation}
where $\operatorname{Tri}$ counts the number of triangles in $G$.

We now apply this formula to each instance of $\rho_{r_v^{(2)}}$ in Eq.~\eqref{eq:edge_formula}.  Instantiating  \eqref{eq:third_erey} with the graph $G-v$ with restraints \{1,2\} on all neighbors of $v$ yields
\begin{align*}
    b_{n-3}^{(v)}&=  \binom{m-\deg(v)}{2} -\operatorname{Tri}(G-v)+4\binom{\deg(v)}{2}+2\deg(v)(m-\deg(v))-2\operatorname{Tri}_v(G) \\ 
    &=  \binom{m}{2}+\deg(v)\frac{2m-3}{2}+\frac{1}{2}\deg(v)^2-\operatorname{Tri}(G-v)-2\operatorname{Tri}_v(G)\ ,
\end{align*}
where $\operatorname{Tri}_v(G)$ counts the number of triangles in $G$ that include vertex $v$.  We have also rewritten this coefficient as $b_{n-3}^{(v)}$ as the graph $G-v$ has $n-1$ vertices.  As in the chromatic pairs polynomial formula, we now sum $b_{n-3}^{(v)}$ over all vertices $v$.  To simplify the terms involving the triangle counts, we note that
$$
\sum_{i=1}^{n} \operatorname{Tri}_{v_i}(G) = 3 \operatorname{Tri}(G) \ ,
$$
since each triangle gets counted $3$ times on the left hand side. Moreover,
$$
\sum_{i=1}^{n} \operatorname{Tri}(G- v_i) = (n-3) \operatorname{Tri}(G) \ ,
$$
since each triangle get counted exactly $n-3$ times on the left hand side (one for each vertex $v$ not part of the triangle).  Letting $d_i=\deg(v_i)$, we have
\begin{align*}
    \sum_{i=1}^n b_{n-3}^{(v_i)} = &\sum_{i=1}^n \left[\binom{m}{2}+d_i\frac{2m-3}{2}+\frac{1}{2}d_i^2-\operatorname{Tri}(G-v)-2\operatorname{Tri}_v(G)\right] \\
    = & -(n+3)\operatorname{Tri}(G) + n\binom{m}{2} +m(2m-3) +\frac{1}{2}\sum_{i=1}^n d_i^2 \ ,
\end{align*}
where we have used the handshake lemma $2m=\sum_{i\in[n]}d_i$.
To finish our computation of the third highest coefficient in the chromatic pairs polynomial $\pi^{(P_2)}_{G}(k)$, we need to compute the coefficient of $k^{n-1}$ in the expansion $\binom{k}{2} \cdot \sum_{v\in V(G)} \rho_v(k)$.  From \eqref{eq:form_first2}, we need to add $nm+2m$ to our result and divide by 2.  Thus, the coefficient of 
$k^{n-1}$ in $\pi^{(P_2)}_{G}(k)$ is
$$
\frac{1}{2}\left(nm+2m - (n+3)\operatorname{Tri}(G) + \frac{nm(m-1)}{2} + 2m^2 - 3m + \frac{1}{2} \sum_{i=1}^{n} d_i^2 \right) \ ,
$$
which can be simplified to
$$
\frac{1}{2} \left( \frac{nm(m+1)}{2} + 2m^2 - m - (n+3) \operatorname{Tri}(G) + \frac{1}{2}\sum_{i=1}^{n} d_i^2 \right) \ .
$$
\end{proof}

\section{The Derivative of the Chromatic Pairs Polynomials for Pseudotrees}\label{app:deriv}
For pseudotree $G$, we want to calculate the derivative $\frac{d}{dk} \pi_{G}^{(P_2)}(k)$ evaluated at $k=2$. As a preliminary, we calculate $\pi_{C_{\ell}}(k)$ and $\frac{d}{dk} \pi_{C_{\ell}}(k)$ at $k=2$. Since $\pi_{C_{\ell}}(k) = (k-1)^{\ell} + (-1)^{\ell} (k-1)$, we get $\pi_{C_{\ell}}(2)=1+(-1)^{\ell}$ which reflects the fact that $C_{\ell}$ is $2$-colorable if and only if $\ell$ is even. Next,
\begin{align*}
\frac{d}{dk} \pi_{C_{\ell}}(k) = \ell (k-1)^{\ell-1} + (-1)^{\ell}  \ \Rightarrow \ \frac{d}{dk} \pi_{C_{\ell}}(k)\Big|_{k=2} = \ell + (-1)^{\ell} 
\end{align*}
For the derivative of $\pi^{(P_2)}_{C_{\ell}}$, we have
\begin{align*}
&\frac{d}{dk} \pi^{(P_2)}_{C_{\ell}}(k) = 
\frac{\ell}{2} (k-1)^{\ell-2} \left[2(k-1)(k-2) + (\ell-1) k(k-4) \right] 
+ 2 \ell(\ell-1) (k-1)^{\ell-2} + (-1)^{\ell} \ell (2k-3)  \\ 
\Rightarrow & \ \ \ \frac{d}{dk} \pi^{(P_2)}_{C_{\ell}}(k)\Big|_{k=2} = (-1)^{\ell} \cdot \ell 
\end{align*}
Moreover, $\pi^{(P_2)}_{C_{\ell}}(2)=0$. Next, we calculate $\frac{d}{dk} \pi_{G}^{(P_2)}(k)$ for a pseudotree $G$. We have
$$
\frac{d}{dk} \pi_{G}^{(P_2)}(k) = \eqref{eq:d1} + \eqref{eq:d2} + \eqref{eq:d3} + \eqref{eq:d4} + \eqref{eq:d5}
\ ,$$
where the individual terms are as follows: 
\begin{align}
 &\frac{1}{\ell}\cdot \frac{d}{dk} \pi^{(P_2)}_{C_{\ell}}(k) \cdot \sum_{i=1}^{\ell} (k-2)^{d_i-2} (k-1)^{n-\ell-(d_i-2)}\ , 
   \label{eq:d1}  \\ 
 &\frac{\pi^{(P_2)}_{C_{\ell}}(k)}{\ell} \sum_{i=1}^{\ell} (d_i-2) (k-2)^{d_i-3}(k-1)^{n-\ell-(d_i-2)} + (n-\ell-(d_i-2)) (k-2)^{d_i-2} (k-1)^{n-\ell-d_i+1}\ ,
 \label{eq:d2}\\ 
&\binom{k}{2} \cdot \frac{1}{k}\cdot \frac{d}{dk} \pi_{C_{\ell}}(k) \cdot
\sum_{k=\ell+1}^{n} (k-2)^{d_i} (k-1)^{n-\ell-d_i}\ , \label{eq:d3}\\
& \binom{k}{2} \cdot \frac{\pi_{C_{\ell}}(k)}{k}\cdot 
\sum_{k=\ell+1}^{n} d_i (k-2)^{d_i-1} (k-1)^{n-\ell-d_i} + (n-\ell-d_i) (k-2)^{d_i} (k-1)^{n-\ell-d_i-1}\  \label{eq:d4}\\
& \frac{1}{2} \pi_{C_\ell}(k)\sum_{i=\ell+1}^n (k-2)^{d_i}(k-1)^{n-\ell-d_i}\ .\label{eq:d5}
\end{align}

Next, we evaluate $\frac{d}{dk} \pi_G^{(P_2)}(k)$ at $k=2$ and analyze the effect on the terms \eqref{eq:d1}-\eqref{eq:d5}. First, any term with a positive power of $k-2$ factor will vanish. Hence, $\eqref{eq:d3}=\eqref{eq:d5}=0$. Moreover, $\eqref{eq:d2}=0$ due to $\pi_{C_{\ell}}^{(P_2)}(2)=0$. It remains to examine $\eqref{eq:d1}$ and $\eqref{eq:d4}$. 

Let $\ell_2$ denote the number of cycle vertices with degree 2 (that is, cycle vertices whose only neighbors are on the cycle) and $n_1$ denote the number of vertices of degree 1, which are necessarily non-cycle vertices. When $k=2$, the only surviving terms in \eqref{eq:d1} are the $\ell_2$ summands with $d_i=2$. Similarly, the only surviving terms in \eqref{eq:d4} are the $n_1$ summands with $d_i=1$. Thus, 
$$
\frac{d}{dk} \pi_{G}^{(P_2)}(k)\Big|_{k=2} = \frac{1}{\ell} \frac{d}{dk} \pi_{C_{\ell}}^{(P_2)}(k) \Big|_{k=2} \cdot \ell_2 + \frac{1}{2} \pi_{C_{\ell}}(2)\cdot n_1 
$$
Using $\frac{d}{dk} \pi^{(P_2)}_{C_{\ell}}(k)\Big|_{k=2} = (-1)^{\ell} \cdot \ell$ and $\pi_{C_{\ell}}(2) = 1 +(-1)^{\ell}$, we reach our desired conclusion:
$$
\frac{d}{dk} \pi_{G}^{(P_2)}(2) = (-1)^{\ell} \cdot \ell_2 + \frac{1}{2}(1+(-1)^{\ell}) \cdot n_1 = 
\begin{cases} \ell_2+n_1 & \ell\text{ even} \\ -\ell_2 & \ell\text{ odd}\end{cases}\ .
$$

\section{Determining How Many Colors are 
 Needed For Hypercubes}\label{app:hypercubes}

Here we use Theorem~\ref{thm:hypercubes} to show that for the two graphs in Figure~\ref{fig:G7s} (reproduced here in Figure~\ref{fig:G7-graphs-only}), assessing the number of colors needed for hypercubes of dimension 0 through $n$ to appear in the respective coloring graphs cannot help distinguish the graphs.

\tikzset{
G7aALPH/.pic = {
\node (a) at (0,0) {};
\node (b) at (3,0) {};
\node (c) at (3,3) {};
\node (d) at (0,3) {};
\node (e) at (1.5,0.5) {};
\node (f) at (1.5,1.5) {};
\node (g) at (1.5,2.5) {};
\draw[style=thick] (a)--(b)--(c)--(d)--(a)--(e)--(f)--(g);
\draw[style=thick] (a)--(g)--(c);
\draw[style=thick] (b)--(g)--(d);
\draw[style=thick] (a)--(f);
\draw[radius=.35,fill=white](a) circle node{B};
\draw[radius=.35,fill=white](b) circle node{A};
\draw[radius=.35,fill=white](c) circle node{B};
\draw[radius=.35,fill=white](d) circle node{A};
\draw[radius=.35,fill=white](e) circle node{C};
\draw[radius=.35,fill=white](f) circle node{A};
\draw[radius=.35,fill=white](g) circle node{C};
}
}

\tikzset{
G7bALPH/.pic = {
\node (a) at (0,0) {};
\node (b) at (3,0) {};
\node (c) at (3,3) {};
\node (d) at (0,3) {};
\node (e) at (1.5,0.5) {};
\node (f) at (2.5,1.5) {};
\node (g) at (1.5,1.5) {};
\draw[style=thick] (a)--(b)--(c)--(d)--(a)--(e)--(b)--(f)--(c);
\draw[style=thick] (a)--(g)--(c);
\draw[style=thick] (b)--(g)--(d);
\draw[radius=.35,fill=white](a) circle node{C};
\draw[radius=.35,fill=white](b) circle node{B};
\draw[radius=.35,fill=white](c) circle node{C};
\draw[radius=.35,fill=white](d) circle node{B};
\draw[radius=.35,fill=white](e) circle node{A};
\draw[radius=.35,fill=white](f) circle node{A};
\draw[radius=.35,fill=white](g) circle node{A};
}
}

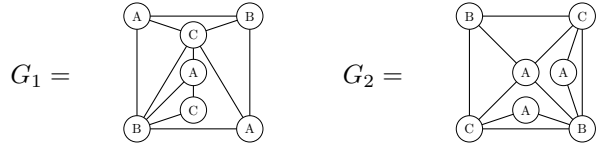
\begin{figure}[h]
\begin{tabular}{rlcrlc}
 \begin{tabular}{r}$G_1=$\end{tabular} & \begin{tabular}{l}\tikz \node [scale=0.5, inner sep=0] {
   \begin{tikzpicture}
     \pic at (0,0) {G7aALPH};
   \end{tikzpicture}
};\end{tabular} && 
    \begin{tabular}{r}$G_2=$\end{tabular} & \begin{tabular}{l}\tikz \node [scale=0.5, inner sep=0] {
   \begin{tikzpicture}
     \pic at (0,0) {G7bALPH};
   \end{tikzpicture}
};\end{tabular}
\end{tabular}
\caption{Two 7-vertex graphs each partitioned into three independent sets}\label{fig:G7-graphs-only}
\end{figure}

\begin{claim} $Q_0$ first appears in $k$-coloring graphs at $k=3$ of $G_1$ and $G_2$. 

More precisely, 
$\pi_{G_1}^{(Q_0)}(2)=\pi_{G_2}^{(Q_0)}(2)=0$, and $\pi_{G_1}^{(Q_0)}(3)>0, \pi_{G_2}^{(Q_0)}(3)>0$.
\end{claim}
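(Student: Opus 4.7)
The plan is to translate the claim about $Q_0$-polynomials into a claim about chromatic polynomials and then verify the two values directly by inspecting the graphs in Figure~\ref{fig:G7-graphs-only}. Observe that $Q_0 = N_1$, so by Corollary~\ref{cor:specializes} we have $\pi_{G_i}^{(Q_0)}(k) = \pi_{G_i}(k)$ for $i=1,2$. Thus the claim reduces to showing that $\chi(G_1) = \chi(G_2) = 3$, i.e., that each graph is not 2-colorable but admits a proper 3-coloring.

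For the lower bound $\chi(G_i) \ge 3$, I would exhibit an explicit triangle in each graph. In both $G_1$ and $G_2$, the central vertex labeled $g$ is adjacent to both $a$ and $b$, and the edge $ab$ is present (it is one of the sides of the outer 4-cycle $a\text{-}b\text{-}c\text{-}d\text{-}a$). Hence $\{a,b,g\}$ induces a $K_3$ in each graph, which already forbids any proper $2$-coloring; so $\pi_{G_1}(2) = \pi_{G_2}(2) = 0$.

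For the upper bound $\chi(G_i) \le 3$, it suffices to produce a single proper 3-coloring of each graph, which is exactly what the labels $A$, $B$, $C$ in Figure~\ref{fig:G7-graphs-only} provide. I would briefly verify that the displayed partition of $V(G_i)$ into the three color classes is indeed a partition into independent sets by checking each edge of $G_i$ against the labels. Since $\pi_{G_i}(3)$ counts such colorings, the existence of even one gives $\pi_{G_i}(3) \ge 1 > 0$.

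The argument is essentially a verification: the only potential obstacle is bookkeeping when checking that no two vertices of the same color are adjacent in the (slightly edge-dense) drawings of $G_1$ and $G_2$. Once that inspection is done, both parts of the claim follow immediately from $\pi_{G_i}^{(Q_0)}(k) = \pi_{G_i}(k)$ together with $\chi(G_i) = 3$.
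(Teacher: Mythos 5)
Your proof is correct and follows the same route as the paper: the reduction $\pi_{G_i}^{(Q_0)}=\pi_{G_i}$, a triangle (indeed $\{a,b,g\}$ works in both graphs) ruling out $2$-colorings, and the labeled $A$/$B$/$C$ partition in the figure providing a proper $3$-coloring. Your write-up is just slightly more explicit about which triangle is used and about checking the color classes against the edge lists.
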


Each graph has a triangle, so 2-colorings do not exist, yet the independent set partitions labeled in Figure~\ref{fig:G7-graphs-only} illustrate a 3-coloring of each graph, giving rise to $Q_0$ in the 3-coloring graphs.

\begin{claim} $Q_1, Q_2, Q_3$ first appear in $k$-coloring graphs of $G_1$ and $G_2$ at $k=4$. 

More precisely, $\pi_{G_1}^{(Q_1)}(3)=\pi_{G_2}^{(Q_1)}(3)=0$, and $\pi_{G_1}^{(Q_3)}(4)>0, \pi_{G_2}^{(Q_3)}(4)>0$.
\end{claim}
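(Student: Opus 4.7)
The plan is to reduce to two endpoint statements via monotonicity. Since $Q_s$ sits as an induced subgraph of $Q_{s+1}$ (fix one coordinate to a specific value), the monotonicity-in-$H$ property noted after Corollary~\ref{cor:specializes} yields $\pi_{G_i}^{(Q_s)}(k) \ge \pi_{G_i}^{(Q_{s+1})}(k)$ for all $k$. Consequently, $\pi_{G_i}^{(Q_1)}(3) = 0$ forces $\pi_{G_i}^{(Q_2)}(3) = \pi_{G_i}^{(Q_3)}(3) = 0$, while $\pi_{G_i}^{(Q_3)}(4) > 0$ yields $\pi_{G_i}^{(Q_1)}(4), \pi_{G_i}^{(Q_2)}(4) > 0$, so only the two displayed endpoints need to be proved.

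For the $k = 3$ vanishing, recall that $Q_1 = P_2$ and that, by Equation~\eqref{eq:edge_formula}, it suffices to show $\rho_{r_v^{(2)}}(3) = 0$ for every $v \in V(G_i)$---equivalently, that every proper $3$-coloring of $G_i$ is \emph{rigid}, meaning no single vertex can change color without destroying properness. First I would argue each $G_i$ is uniquely $3$-colorable up to a permutation of the color palette. In both graphs the central vertex $g$ together with the $4$-cycle $a$--$b$--$c$--$d$--$a$ produces the triangles $\{a,b,g\}$, $\{b,c,g\}$, $\{c,d,g\}$, $\{d,a,g\}$; fixing $g$'s color then forces $\{a,c\}$ to a second color and $\{b,d\}$ to the third via the $C_4$ constraint. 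The remaining vertices $e$ and $f$ are pinned by additional triangles: $\{a,e,f\}$ and $\{a,f,g\}$ in $G_1$, and $\{a,b,e\}$ and $\{b,c,f\}$ in $G_2$. Once uniqueness is in hand, a direct per-vertex check confirms that in this essentially unique $3$-coloring every vertex's open neighborhood contains both colors other than its own, which is exactly the rigidity condition.

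For the $k = 4$ positivity, I would invoke Theorem~\ref{thm:hypercubes} with an independent set of size three and the constant coloring $c \equiv 1$. Choose $U_1 = \{b, d, f\} \subset V(G_1)$ and $U_2 = \{e, f, g\} \subset V(G_2)$, both of which are seen to be independent from the edge lists of the two graphs. Then $r_{(U_i, c)}$ restrains every neighbor of $U_i$ from using colors $1$ or $2$, and in each case that covers all of $V(G_i) - U_i$, so the task reduces to finding a proper coloring of $G_i - U_i$ using only colors from $\{3, 4\}$. But $G_1 - U_1$ is the path $e$--$a$--$g$--$c$ and $G_2 - U_2$ is the $4$-cycle $a$--$b$--$c$--$d$--$a$, both bipartite; hence a $\{3,4\}$-coloring exists. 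Therefore $\rho_{G_i - U_i, r_{(U_i, c)}}(4) > 0$ in both cases, and Theorem~\ref{thm:hypercubes} gives $\pi_{G_i}^{(Q_3)}(4) > 0$.

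The main obstacle is the uniqueness of the $3$-colorings, which relies on the triangle structure around $g$: in both graphs $g$ serves as the apex of a near-wheel over the cycle $abcd$, a feature that ultimately pins down every color and forces rigidity. The remaining ingredients---the rigidity verification from the essentially unique coloring, and the bipartite $\{3,4\}$-coloring of $G_i - U_i$---are direct bookkeeping against the hypotheses of Theorem~\ref{thm:hypercubes}.
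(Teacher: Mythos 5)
Your proof is correct, and the positivity half at $k=4$ is essentially the paper's: your sets $U_1=\{b,d,f\}$ and $U_2=\{e,f,g\}$ are exactly the size-3 independent sets (the A vertices) used in the appendix, with the bipartite remainders $e$--$a$--$g$--$c$ and the 4-cycle $a$--$b$--$c$--$d$--$a$; you route this through Theorem~\ref{thm:hypercubes} directly rather than through Corollary~\ref{cor:hypercube}, which changes nothing of substance (and your remark that every vertex of $G_i-U_i$ is dominated by $U_i$ is not even needed, since your $\{3,4\}$-coloring avoids the forbidden colors regardless). Where you genuinely diverge is the vanishing at $k=3$: the paper simply observes that every vertex of $G_1$ and $G_2$ lies in a triangle, so in any proper 3-coloring that triangle is rainbow and the vertex already sees both other colors among its neighbors; hence every 3-coloring is rigid and $\mathcal C_3(G_i)$ is edgeless, with no need to classify the colorings. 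You instead prove unique 3-colorability up to palette permutation (via the wheel over $a,b,c,d$ with hub $g$, plus the pinning triangles for $e$ and $f$) and then verify rigidity vertex by vertex in that unique coloring; your forcing chain and per-vertex check are accurate for both graphs, and your monotonicity reduction ($Q_s$ induced in $Q_{s+1}$) makes explicit what the paper leaves implicit. The trade-off is that your $k=3$ argument costs a small case analysis but yields the stronger fact of unique 3-colorability, whereas the paper's triangle observation gets rigidity in one line and would still apply even if the graphs admitted many 3-colorings.
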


Next we note that the 3-coloring graphs of each graph are edgeless (i.e., they contain no copies of $Q_1$) because every vertex in each graph participates in a triangle, so any 3-coloring of the triangle leaves no available second color for any vertex. However, each graph has an independent set of size 3 (the A vertices) and the rest of the graph is 2-colorable; so $Q_3$ first appears in the 4-coloring graph for $G_1$ and $G_2$ by Corollary~\ref{cor:hypercube}. 

\begin{claim} $Q_4$ and $Q_5$ first appear in $k$-coloring graphs of $G_1$ and $G_2$ at $k=5$. 

More precisely, $\pi_{G_1}^{(Q_4)}(4)=\pi_{G_2}^{(Q_4)}(4)=0$, and $\pi_{G_1}^{(Q_5)}(5)>0, \pi_{G_2}^{(Q_5)}(5)>0$.
\end{claim}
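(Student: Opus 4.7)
The plan is to invoke Theorem~\ref{thm:hypercubes} in both directions. For the positive claims $\pi_{G_i}^{(Q_5)}(5) > 0$, I will exhibit explicit witnesses: a $5$-vertex $U \subseteq V(G_i)$ carrying a proper $2$-coloring $c \colon U \to \{1,2\}$, together with a proof that $V(G_i) - U$ admits a legal $5$-coloring respecting the restraint $r_{(U,c)}$. A natural witness for $G_1$ is $U = \{a,b,c,d,f\}$, whose induced subgraph is $C_4$ with a pendant edge $af$, so the bipartition $\{a,c\} \sqcup \{b,d,f\}$ supplies $c$. The two remaining vertices $e$ and $g$ both inherit restraint $\{1,2,3,4\}$ and hence must both use color $5$, which is legal since $eg \notin E(G_1)$. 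For $G_2$ the analogous witness is $U = \{a,c,e,f,g\}$, whose induced subgraph is the path $e$-$a$-$g$-$c$-$f$ with bipartition $\{e,f,g\} \sqcup \{a,c\}$; the excluded vertices $b$ and $d$ again both pick up restraint $\{1,2,3,4\}$ and can both take color $5$ since $bd \notin E(G_2)$.

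For the negative claims $\pi_{G_i}^{(Q_4)}(4) = 0$, note that by Theorem~\ref{thm:hypercubes} a positive count would require $U$ of size $4$, a proper $c \colon U \to \{1,2\}$, and $\rho_{G_i - U, r_{(U,c)}}(4) > 0$. As soon as some $v \in V(G_i) - U$ has neighbors in $U$ of both $c$-colors, one has $r_{(U,c)}(v) = \{1,2,3,4\}$ and no color available, so positivity of $\rho$ at $k = 4$ forces $N(v) \cap U$ to be monochromatic under $c$ for every external $v$. The negative direction thus reduces to the purely combinatorial task: verify that no $4$-vertex induced bipartite subgraph $G_i[U]$ admits a bipartition whose every external vertex has $U$-neighborhood contained in a single class.

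I plan to verify this by enumeration. Both graphs contain six triangles (for instance $\{a,b,g\},\{a,d,g\},\{b,c,g\},\{c,d,g\},\{a,e,f\},\{a,f,g\}$ in $G_1$), which substantially prunes the list of $4$-vertex triangle-free (equivalently, bipartite) subsets: organizing by whether the degree-$5$ vertices ($g$ and $a$ for $G_1$, and $b$ for $G_2$) lie in $U$ or outside, the casework collapses to roughly a dozen candidate $U$ per graph. For each candidate I will read off the bipartition of $G_i[U]$ (unique up to color swap when $G_i[U]$ is connected) and exhibit an external vertex straddling it. The main obstacle is controlling the bookkeeping across all these cases; the streamlining heuristic I will exploit is that whichever of the degree-$5$ vertices lies in $V(G_i) - U$ has so many neighbors in any size-$4$ set $U$ that they must straddle both sides of any nontrivial bipartition, and in the sub-case where all high-degree vertices lie inside $U$ a second degree-$\ge 3$ external vertex plays the same role. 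The few residual cases in which $G_i[U]$ has isolated vertices (allowing some flexibility in their placement within the bipartition) will be checked individually by inspecting every possible placement.
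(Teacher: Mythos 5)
Your proposal is correct, and its two halves relate to the paper's proof differently. For the positive statements $\pi_{G_i}^{(Q_5)}(5)>0$ you are doing essentially what the paper does: the paper colors the three independence classes of each graph with $\{1,2\}$, $\{3,4\}$, and $\{5\}$, which for $G_1$ is exactly your witness (swap set $\{a,b,c,d,f\}$, frozen $\{e,g\}$), and for $G_2$ differs only in which class is frozen (the paper swaps $\{e,f,g\}$ and $\{b,d\}$ and freezes $\{a,c\}$; you swap $\{e,f,g\}$ and $\{a,c\}$ and freeze $\{b,d\}$); both witnesses check out. For the negative statements $\pi_{G_i}^{(Q_4)}(4)=0$ your route is genuinely different: you run everything through Theorem~\ref{thm:hypercubes}, note that at $k=4$ an external vertex with $U$-neighbors in both color classes has no color left, and reduce to a finite enumeration over $4$-subsets $U$ with $G_i[U]$ bipartite and every external $U$-neighborhood monochromatic. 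The paper instead argues from the generator structure (Lemma~\ref{lem:cubeconstruction}): with four colors no two of the four color-changing vertices can lie in a common triangle, and since every edge of $G_1$ and $G_2$ lies in a triangle the color-changing vertices are independent and each has degree at most $3$ (a degree-$\geq 4$ color-changing vertex leaves only $7-1-4=2$ candidates); then $G_1$'s five low-degree vertices contain no independent $4$-set and $G_2$ has only three low-degree vertices. The paper's argument is shorter and avoids case analysis; yours is more mechanical but sound, and the enumeration you defer does succeed --- for instance, in $G_1$ with $a\in U$ and $g\notin U$, monochromaticity of $N(g)\cap U$ forces $U\cap\{b,d,f\}=\emptyset$ (these are neighbors of $a\in N(g)\cap U$), leaving $U\subseteq\{a,c,e\}$, which is too small, and the remaining cases for both graphs collapse just as quickly. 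The one caveat is that your write-up leaves this finite check as a plan (and your "straddling" heuristic for an external degree-$5$ vertex needs the adjacency of its $U$-neighbors to the in-$U$ hub, not just a counting argument); the claim $\pi_{G_i}^{(Q_4)}(4)=0$ rests on that check, so it should be recorded explicitly, but it is routine and verifiably goes through.
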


Any $Q_4$ that arises in the 4-coloring graphs must correspond to 4 vertices changing color in $Q_4$, and no two of these vertices can be in the same triangle, because two vertices changing color in a triangle require 5 colors. Hence any color-changing vertex $v$ cannot have more than 3 neighbors, because this would leave at most $7-1-4=2$ additional candidate vertices to change color in $Q_4$. 
The subgraph of $G_1$ consisting of the five vertices of degree $\le 3$ contains no independent set of size 4, and $G_2$ only has three vertices of degree $\le 3$, so neither 4-coloring graph contains $Q_4$.

On the other hand, we show that both 5-coloring graphs contain $Q_5$. Using the independent partitions of $G$ into $A$, $B$, $C$, we let the vertices in $A$ take colors $1$ and $2$, vertices in $B$ take colors $3$ and $4$, and vertices in $C$ take color $5$. This shows $\pi^{(Q_5)}_{G_1}(5) > 0$ and $\pi^{(Q_5)}_{G_2}(5) > 0$. 

\begin{claim} $Q_6$ and $Q_7$ first appear in $k$-coloring graphs of $G_1$ and $G_2$ at $k=6$. 

More precisely, $\pi_{G_1}^{(Q_6)}(5)=\pi_{G_2}^{(Q_6)}(5)=0$, and $\pi_{G_1}^{(Q_7)}(6)>0, \pi_{G_2}^{(Q_7)}(6)>0$.
\end{claim}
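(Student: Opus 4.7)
The plan is to apply Theorem~\ref{thm:hypercubes} in both directions, using the $3$-coloring exhibited in Figure~\ref{fig:G7-graphs-only} for the positive claim and a triangle enumeration for the negative claim.

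For the positive claim, I would take $U = V(G_i)$ (so $|U| = 7$) and let $c\colon V(G_i) \to [3]$ be the proper $3$-coloring corresponding to the independent-set partition $\set{A, B, C}$ already displayed in Figure~\ref{fig:G7-graphs-only}. Since $\lfloor 6/2 \rfloor = 3$, this $c$ is admissible in Theorem~\ref{thm:hypercubes}, and since $V(G_i) - U = \emptyset$ one has $\rho_{G_i - U, r_{(U,c)}}(6) = 1 > 0$ trivially. Theorem~\ref{thm:hypercubes} then yields $\pi_{G_i}^{(Q_7)}(6) > 0$ for $i \in \set{1,2}$, and monotonicity in $H$ gives $\pi_{G_i}^{(Q_6)}(6) \ge \pi_{G_i}^{(Q_7)}(6) > 0$, completing the ``$k=6$ suffices'' half.

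For the negative claim, I would argue by contradiction. If $\pi_{G_i}^{(Q_6)}(5) > 0$, then Theorem~\ref{thm:hypercubes} supplies some $U \subseteq V(G_i)$ of size $6$ together with a proper coloring $c\colon U \to [\lfloor 5/2 \rfloor] = [2]$. Writing $U = V(G_i) - \set{v^*}$, this forces $G_i - v^*$ to be bipartite for some $v^* \in V(G_i)$, and I would rule this out by exhibiting, for each such $v^*$, a triangle contained in $G_i - v^*$. A short enumeration suffices: for $G_1$ the six triangles $\set{a,b,g}, \set{b,c,g}, \set{c,d,g}, \set{a,d,g}, \set{a,f,g}, \set{a,e,f}$ collectively cover every deletion (in particular $\set{a,e,f}$ handles $v^* = g$, while the five $g$-containing triangles handle $v^* \in \set{a,b,c,d,f}$ in pairs), and for $G_2$ the six triangles $\set{a,b,e}, \set{b,c,f}, \set{a,b,g}, \set{b,c,g}, \set{a,d,g}, \set{c,d,g}$ play the analogous role, with $\set{a,b,e}$ and $\set{b,c,f}$ jointly handling $v^* = g$. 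The main obstacle is the mildly tedious case-by-case verification that each single-vertex deletion still contains at least one of the listed triangles, but this is a routine table-check against the edge lists of $G_1$ and $G_2$ read off from Figure~\ref{fig:G7s}.
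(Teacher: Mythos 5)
Your proposal is correct and takes essentially the same approach as the paper: the positive half is the identical construction (vertices of $A$ swapping between colors $1,2$, of $B$ between $3,4$, of $C$ between $5,6$, which is exactly what Theorem~\ref{thm:hypercubes} with $U=V(G_i)$ and the displayed $3$-coloring produces, with $\rho_{\emptyset}(6)=1$), and the negative half rests on the same combinatorial fact, namely that every one-vertex deletion of $G_1$ and of $G_2$ still contains a triangle. The only difference is cosmetic: the paper deduces the obstruction directly from the structure of $Q_6$-generators (six color-changing vertices cannot contain a triangle when only five colors are available, so the omitted vertex would have to lie in every triangle), whereas you invoke the only-if direction of Theorem~\ref{thm:hypercubes} to force $G_i-v^{*}$ to be bipartite and then contradict this with your (correct) triangle enumerations.
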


In order for $Q_6$ to arise in a $5$-coloring graph, all but one of the vertices in the base graph must change color in $Q_6$. With only five colors, no choice of six vertices can include an entire triangle. Hence, the omitted vertex must participate in every triangle, and neither graph has such a vertex.

Both 6-coloring graphs contain $Q_7$ by assigning colors $1$ and $2$ to vertices in $A$, colors $3$ and $4$ to vertices in $B$, and colors $5$ and $6$ to vertices in $C$. 

\begin{claim} $Q_d$ never appears in any $k$-coloring graph of $G_1$ or $G_2$ for $d>7$.
\end{claim}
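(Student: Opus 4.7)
The plan is to apply Lemma~\ref{lem:cubeconstruction} directly. That lemma tells us that any minimal $Q_d$-generator $(U,C)$ must have $U=\{v_0,\dots,v_{d-1}\}$, i.e., exactly $d$ distinct color-changing vertices in the base graph. Combined with Theorem~\ref{thm:general}, which expresses $\pi_G^{(Q_d)}(k)$ as a nonnegative-coefficient sum over minimal $Q_d$-generators in $G$, this forces $\pi_G^{(Q_d)}(k)=0$ whenever $G$ has no subset $U\subseteq V(G)$ of size $d$ available to serve as the color-changing vertex set.

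Since $|V(G_1)|=|V(G_2)|=7$, for any $d>7$ there simply is no subset $U\subseteq V(G_i)$ with $|U|=d$, hence no minimal $Q_d$-generator exists in $G_i$. Therefore the sum in Eq.~\eqref{eq:thm} is empty, and $\pi_{G_i}^{(Q_d)}(k)=0$ identically for every $k\ge 1$ and every $d>7$, $i\in\{1,2\}$.

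There is no real obstacle here: the claim is an immediate structural consequence of Lemma~\ref{lem:cubeconstruction} together with the finiteness of the vertex sets of $G_1$ and $G_2$. More generally, the same argument shows $\pi_G^{(Q_d)}(k)=0$ for all $d>|V(G)|$ and any graph $G$, which is why the interesting range of dimensions for hypercube polynomials of a fixed $G$ is always bounded by $|V(G)|$.
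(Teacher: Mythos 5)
Your proposal is correct and is essentially the paper's argument: the paper's one-line justification that ``$Q_d$ would require $d$ color-changing vertices'' is exactly the content of Lemma~\ref{lem:cubeconstruction} that you invoke, combined with the fact that a $7$-vertex base graph has no $d$-element vertex subset for $d>7$. Your added appeal to Eq.~\eqref{eq:thm} just makes explicit why the absence of minimal $Q_d$-generators forces $\pi_{G_i}^{(Q_d)}(k)=0$, which the paper leaves implicit.
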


Indeed, $Q_d$ would require $d$ color-changing vertices which is not possible in either base graph for $d>7$.

As noted in Subsection~\ref{sec:hyper}, although both the chromatic pairs polynomial and this coarse invariant fail to distinguish the graphs, the counts of their 2-cubes (4-cycles) differ. Their distinct $C_4$-polynomials are given below for completeness: 

\begin{claim} 
    $G_1$ and $G_2$ have distinct $Q_2$-polynomials. Specifically, 
    \begin{align*}
    \pi_{G_1}^{(C_4)}(k) &= \frac{21}{4} k^9 - \frac{225}{2} k^8 + \frac{2103}{2} k^7 - \frac{22321}{4} k^6 + \frac{36637}{2} k^5 - \frac{151647}{4} k^4 + \frac{192091}{4} k^3 - \frac{67543}{2} k^2 + 9978 k \\
    \pi_{G_2}^{(C_4)}(k) &= \frac{21}{4} k^9 - \frac{225}{2} k^8 + \frac{2103}{2} k^7 - \frac{22323}{4} k^6 + \frac{36649}{2} k^5 - \frac{151757}{4} k^4 + \frac{192331}{4} k^3 - \frac{67667}{2} k^2 + 10002k
\end{align*}
\end{claim}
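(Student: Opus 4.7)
The plan is to apply Equation~\eqref{eq:C4} directly to both graphs and verify the stated polynomial identities. Each graph has $n=7$ vertices and $m=12$ edges, so there are $\binom{7}{2}=21$ vertex pairs, of which $12$ are adjacent and $9$ are non-adjacent. For each pair $\{u,v\}$, I would compute the restrained chromatic polynomial $\rho_{r_{uv}^{(4)}}(k)$ on the 5-vertex graph $G-\{u,v\}$, and additionally $\rho_{r_{uv}^{(3)}}(k)$ and $\rho_{r_{uv}^{(2)}}(k)$ when $uv \notin E(G)$. As a preliminary sanity check, the leading term of $6\binom{k}{4}\cdot k^5$ summed over 21 pairs yields $6 \cdot 21 / 24 = 21/4$ for the $k^9$ coefficient, matching the formulas given for both graphs. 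This also confirms that the $k^9$ contribution comes entirely from the first sum in Eq.~\eqref{eq:C4} (the other two sums have maximum degree $8$).

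First, for each pair $\{u,v\}$ and each relevant restraint, I would compute $\rho_{r_{uv}^{(j)}}(k)$ either by iterating the generalized deletion-contraction recurrence for restrained chromatic polynomials \cite{Ere15}*{Theorem~4.1.2}, or by direct case analysis: fix the palette on the restrained vertices (neighbors of $u$ and of $v$ in $G-\{u,v\}$) and count extensions to the remaining vertices subject to adjacency in $G-\{u,v\}$. To reduce redundancy, I would partition the 21 pairs of each graph into equivalence classes under the automorphism action that preserves a pair together with its external neighborhood structure, and compute a single representative per class. Then I would substitute into Eq.~\eqref{eq:C4}, sum across the three terms, and collect coefficients, ultimately comparing to the stated expressions term by term.

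The main obstacle is bookkeeping rather than mathematical content: the two graphs exhibit noticeably different distributions of pair types (for instance, $G_1$ has two vertices of degree $5$ while $G_2$ has only one, so the restraint structures encountered along the way differ in subtle ways), so the two computations must be carried out independently. Note also that the invariants that match under the chromatic pairs polynomial (same $n$, $m$, triangle count $\ell=6$, and $\sum_i d_i^2 = 90$ by Theorem~\ref{thm:edge}) guarantee that the low-complexity coefficients of $\pi_{G_i}^{(C_4)}(k)$ alone cannot separate the two graphs, so the discrepancy will first surface at a coefficient sensitive to finer structure; one computes that the $k^6$ coefficients differ by $\tfrac{1}{2}$, which suffices to establish the claim.

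An alternative route that sidesteps the restrained-polynomial computation is to evaluate $\pi_{G_i}^{(C_4)}(k)$ at ten values of $k$ (say $k=4,\dots,13$) by counting induced 4-cycles in the coloring graphs $\mathcal{C}_k(G_i)$ directly, and then interpolate a degree-$9$ polynomial. Either approach reduces the proof to a finite verification, most efficiently carried out with the aid of computer algebra given the size of the enumeration involved.
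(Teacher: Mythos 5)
Your proposal matches the paper's own approach: the paper likewise obtains both polynomials by applying Eq.~\eqref{eq:C4} to $G_1$ and $G_2$ (a finite, computer-assisted enumeration of pair types and their restrained chromatic polynomials), and your preliminary checks (the $\tfrac{21}{4}k^9$ leading term, the degree bounds on the other sums, and the $\tfrac{1}{2}$ gap in the $k^6$ coefficients) are all consistent with the stated formulas. The paper adds only the observation that the two counts first diverge at $k=5$ ($24540$ versus $24360$ squares), which your interpolation alternative would also detect.
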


Noting that $Q_2=C_4$, these polynomials were constructed by applying Eq.~\eqref{eq:C4} to the two graphs. Interestingly, these polynomials agree on their first nonzero value of $k$: $\mathcal C_4(G_1)$ and $\mathcal C_4(G_2)$ each have 288 squares. They first differ at $k=5$: $\mathcal C_5(G_1)$ has 24540 squares and $\mathcal C_5(G_2)$ has 24360.

\end{document}